\setlist[enumerate]{label=(\alph*)}
\newcommand{\Z}{\mathbb{Z}}
\newcommand{\R}{\mathbb{R}}
\newcommand{\Q}{\mathbb{Q}}
\newcommand{\F}{\mathbb{F}}
\newcommand{\Hom}{\operatorname{Hom}}
\newcommand{\Aut}{\operatorname{Aut}}
\newcommand{\bigO}{\mathcal{O}}
\newcommand{\tr}{\operatorname{tr}}
\newcommand{\rk}{\operatorname{rk}}
\newcommand{\inv}{^{-1}}
\newcommand{\Res}{\operatorname{Res}}
\newcommand{\Inf}{\operatorname{Inf}}
\newcommand{\im}{\operatorname{im}}
\newcommand{\Syl}{\operatorname{Syl}}
\newcommand{\GL}{\operatorname{GL}}
\newcommand{\triv}{\mathbf{triv}}
\newcommand{\calB}{\mathcal{B}}
\newcommand{\calC}{\mathcal{C}}
\newcommand{\calE}{\mathcal{E}}
\newcommand{\calF}{\mathcal{F}}
\newcommand{\calH}{\mathcal{H}}
\newcommand{\calR}{\mathcal{R}}
\newcommand{\calT}{\mathcal{T}}
\newtheorem{theorem}{Theorem}[section]
\newtheorem{lemma}[theorem]{Lemma}
\newtheorem{prop}[theorem]{Proposition}
\newtheorem{corollary}[theorem]{Corollary}
\newtheorem*{theorem*}{Theorem}
\theoremstyle{remark}
\newtheorem{remark}[theorem]{Remark}
\newtheorem{example}[theorem]{Example}
\newtheorem{observation}[theorem]{Observation}
\theoremstyle{definition}
\newtheorem{definition}[theorem]{Definition}
\newtheorem{notation}[theorem]{Notation}
\newtheorem{construction}[theorem]{Construction}
\begin{document}
    \title{The Euler characteristic of an endotrivial complex}
\author{Nadia Mazza}
\address
{School of Mathematical Sciences\\ University of
  Lancaster\\ Lancaster \\LA1 4YF, UK}
\email{n.mazza@lancaster.ac.uk}
\author{Sam K. Miller}
\address{Department of Mathematics\\ University of Georgia \\
Athens\\ GA~30602, USA}
\email{Sam.Miller@uga.edu}
    \subjclass[2020]{20J05, 19A22, 20C05, 20C20} %required
    \keywords{Endotrivial complex, Euler characteristic, permutation module, trivial source ring, Grothendieck group, splendid Rickard equivalence} %optional
    \begin{abstract}
        Let $G$ be a finite group and $k$ a field of prime characteristic $p$. We examine the Lefschetz homomorphism $\Lambda: \calE_k(G) \to O(T(kG))$ from the group of endotrivial complexes, i.e. the Picard group of the bounded homotopy category of $p$-permutation modules $K^b({}_{kG}\triv)$, to the orthogonal unit group of the Grothendieck group of $K^b({}_{kG}\triv)$, i.e. the trivial source ring. When $p = 2$ and $k = \F_2$, $\Lambda$ is surjective when $G$ has a Sylow $2$-subgroup with fusion controlled by its normalizer, and when $G$ has dihedral Sylow $2$-subgroups. When $p$ is odd, $\Lambda$ is surjective if $G$ has a cyclic Sylow $p$-subgroup or is $p$-nilpotent, but we exhibit examples of groups of $p$-rank 2 or greater for which $\Lambda$ is not surjective. We also examine the kernel of the Lefschetz homomorphism, determining it for all groups when $p = 2$ and for groups with cyclic Sylow $p$-subgroups when $p$ is odd.
    \end{abstract}

    \maketitle

    \section{Introduction}

    Let $G$ be a finite group and $k$ a field of prime characteristic $p$. \textit{Permutation $kG$-modules}, i.e. $kG$-modules which admit a $G$-invariant basis, and their direct summands, \textit{$p$-permutation $kG$-modules}, are perhaps the most straightforward class of representations to construct. Yet they play an important role in the modular representation theory of finite groups, especially in ``local-to-global'' representation theory of $kG$, which relates the representation theory of $G$ with its \textit{$p$-local subgroups}, subgroups of $G$ of the form $N_G(P)$ with $P$ a $p$-subgroup of $G$. For instance, Brou\'e's abelian defect group conjecture predicts certain equivalences, \textit{splendid Rickard equivalences}, between homotopy categories of $p$-permutation modules. Weaker variants of the conjecture instead predict certain isomorphisms, \textit{$p$-permutation equivalences} between the corresponding Grothendieck groups, or character-theoretic equivalences known as \textit{isotypies}.

    Recently, Balmer and Gallauer \cite{BG25} deduced the Balmer spectrum of the tensor-triangulated category $K^b({}_{kG}\triv)$, the \textit{bounded homotopy category of $p$-permutation $kG$-modules}, (also denoted $\text{K}_b(p\operatorname{-perm}(G;k))$ in the literature). This category is the compact part of a ``big'' compactly generated tensor-triangulated category, the \textit{derived category of permutation modules} $\operatorname{DPerm}(G;k)$. This big category is in fact equivalent to numerous categories beyond the realm of representation theory - for instance it is equivalent to the category of cohomological Mackey functors for $G$ over $k$, the homotopy category of modules over the constant Green functor $\text{H}\underline{k}$ in genuine $G$-spectra (which was recently lifted to an $\infty$-categorical level by Fuhrmann \cite{F25}), and the category of $k$-linear Artin motives generated by motives of intermediate fields in any Galois extension with Galois group $G$. Some of these connections are reviewed in detail in \cite{BG23}, though neither these equivalences nor tensor-triangular geometry will be dwelt upon in this paper.

    Given a rigid monoidal category $\mathcal{C}$, determining its Picard group $\operatorname{Pic}(\calC)$, i.e. the group of invertible objects of $\calC$, those objects $X$ in $\mathcal{C}$ for which $X \otimes X^* \cong \mathbbm{1}_\mathcal{C}$, where $\mathbbm{1}_\mathcal{C}$ denotes the tensor unit of $\mathcal{C}$, is a pertinent question. For the category $\mathcal{C} := K^b({}_{kG}\triv)$, these invertible objects are called \textit{endotrivial complexes}, nomenclature inspired by the invertible objects the stable module category ${}_{kG}\underline{\textbf{mod}}$, endotrivial \textit{modules}. In \cite{M24c}, the second author completely determined the Picard group $\calE_k(G) := \operatorname{Pic}(K^b({}_{kG}\triv))$ for all finite groups $G$, identifying for $p$-groups endotrivial complexes with $G$-representation spheres and \textit{Borel-Smith functions}.

    Another object of interest is the \textit{trivial source ring} $T(kG)$, the Grothendieck group of $K^b({}_{kG}\triv)$ as a tensor-triangulated category, and equivalently, the split Grothendieck group of the additive category ${}_{kG}\triv$. Boltje and Carman in \cite{BC23} gave a character-theoretic description of $T(kG)$ in the language of \textit{coherent character tuples}. In particular, they gave a characterization of the \textit{orthogonal unit group} $O(T(kG))$, the group of units $u \in T(kG)^\times$ satisfying $u\inv = u^*$. These orthogonal units induce $p$-permutation equivalences \cite{BP20} in the same manner that endotrivial complexes induce splendid Rickard equivalences \cite{M24a}.

    We have a natural group homomorphism $\Lambda: \calE_k(G) \to O(T(kG))$ induced by taking the \textit{Euler characteristic} (also called the \textit{Lefschetz invariant}) of an endotrivial complex $C$, i.e. its image in $T(kG)$, an alternating sum of terms. This map $\Lambda$ is called the \textit{Lefschetz homomorphism}, and is the central focus of this paper. In \cite{M24c}, it was shown that when $G$ is a $p$-group, $\Lambda$ is surjective. On the other hand, for non-$p$-groups, if $k$ contains more than $p$ elements, $\Lambda$ may not be surjective \cite{M24a}.

    In this paper, we investigate further the Lefschetz homomorphism for non-$p$-groups. The cases of $p = 2$ and $p$ odd play out rather differently (as usual), due to the characterization of $O(T(kG))$ by Boltje and Carman. For $p = 2$, the question of surjectivity is inherently tied to the unit group of the Burnside ring $B(G)^\times$ of a finite group $G$, whereas for $p$ odd, the question instead is tied to Boltje and Carman's coherent character tuples (in part because for $p$-groups $G$ with $p$ odd, $B(G)^\times$ has order 2). Though a complete characterization of the kernel and image of $\Lambda$ remains a difficult question, we make significant progress on the question, which we summarize now. These results arise in Section \ref{sec:p2}.

    \begin{theorem*}
        Set $k = \F_2$. Let $S$ be a Sylow $2$-subgroup of $G$. Then $\Lambda: \calE_k(G) \to O(T(kG))$ is surjective if the normalizer of $S$ controls the $2$-fusion in $G$, or if $S$ is a dihedral $2$-group.
    \end{theorem*}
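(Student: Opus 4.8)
The plan is to translate surjectivity of $\Lambda$ over $\F_2$ into a realizability statement for admissible sign data, and then to realize all such data—by reduction to the Sylow normalizer in the fusion-controlled case, and by a direct construction in the dihedral case. First I would make the target explicit. For $k=\F_2$ the species of $T(\F_2G)$ are indexed by pairs $(P,s)$ with $P$ a $2$-subgroup of $G$ and $s$ a $2'$-element of $N_G(P)/P$, and an orthogonal unit takes a value $\pm1$ at each such species; the coherence conditions of Boltje and Carman then identify $O(T(\F_2G))$ with the group of admissible sign tuples, recovering the Burnside ring unit group $B(G)^\times$. For an endotrivial complex $C$, the Brauer construction $C(P)$ is an endotrivial complex of $\F_2[N_G(P)/P]$, and the value of $\Lambda(C)$ at $(P,s)$ is the ($\pm1$-valued) Brauer character at $s$ of its Euler characteristic. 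Surjectivity of $\Lambda$ thus becomes the statement that every admissible sign tuple is the family of local Euler characteristics of some endotrivial complex.

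For the fusion-controlled case I would set $N=N_G(S)$ and exploit that $S\trianglelefteq N$ with $N/S$ of odd order, so that $N$ is $2$-closed, $N\cong S\rtimes Q$ with $|Q|$ odd. First I would upgrade the $p$-group result of \cite{M24c} to $N$: as $|Q|$ is odd it contributes no $2$-torsion, so any $N$-stable admissible sign tuple on the $2$-subgroups of $S$ is realized by promoting a Borel-Smith sphere for $S$ to a $Q$-equivariant mod $2$ homology sphere, whose reduced $\F_2$-chain complex is an endotrivial complex of $\F_2N$ with the prescribed local Euler characteristics; hence $\Lambda_N$ is surjective. Because $N$ controls $2$-fusion, restriction then identifies $O(T(\F_2G))$ with the fusion-stable orthogonal units of $T(\F_2N)$ and detects endotrivial complexes of $G$ on their $N$-local Brauer data. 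Given $u\in O(T(\F_2G))$, I would realize $\operatorname{res}^G_N u$ by an endotrivial complex $D$ of $\F_2N$ as above and glue the fusion-consistent local data of $D$ into a global endotrivial complex $C$ of $\F_2G$ with $\Lambda(C)=u$, using the local-to-global characterization of endotrivial complexes together with $N$-control of fusion to keep the gluing unobstructed.

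For the dihedral case fusion need not be $N_G(S)$-controlled, so I would argue more explicitly. Every subgroup of a dihedral $2$-group is cyclic or dihedral, so the Borel-Smith conditions together with fusion stability cut $O(T(\F_2G))\cong B(G)^\times$ down to a small, explicitly generated group; using the Gorenstein-Walter description of groups with dihedral Sylow $2$-subgroups (or a hands-on analysis of the fusion of involutions and of the cyclic maximal subgroup), I would exhibit endotrivial complexes realizing a generating set of admissible sign tuples, for instance as reduced permutation complexes built from the modules $\F_2[G/H]$ for $H$ ranging over representatives of the $2$-local subgroups. The main obstacle in both cases is precisely this last realization step—promoting a compatible family of local signs to a genuine endotrivial complex of $\F_2G$, equivalently realizing a unit of $B(G)^\times$ by a mod $2$ homology $G$-sphere. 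The fusion-control and dihedral hypotheses are exactly what force this obstruction to vanish: in the first case by reducing to a $2$-closed group where the $p$-group construction of \cite{M24c} applies and then transporting along controlled fusion, and in the second by the rigidity of the dihedral subgroup lattice.
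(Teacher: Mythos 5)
There is a genuine gap, concentrated exactly at the step you yourself flag as ``the main obstacle'': realizing a compatible family of local signs by an actual endotrivial complex of $\F_2G$. Your reduction of the problem is right (over $\F_2$ all character tuples are trivial, so surjectivity of $\Lambda$ amounts to surjectivity of $\dim: CF_b(G,2) \to (B(S)^G)^\times$), but the mechanism you propose does not work and the mechanism that does work is absent. First, ``promoting a Borel-Smith sphere for $S$ to a $Q$-equivariant mod $2$ homology sphere'' is unjustified: a real representation of $S$ realizing a given Borel-Smith function need not extend to $S\rtimes Q$, and oddness of $|Q|$ does not by itself produce the required equivariance. Second, the subsequent ``gluing'' of the $N$-local data into a complex over $G$ is asserted, not argued, and it is precisely the hard point. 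The paper sidesteps both issues by never constructing complexes locally at all: by the classification of endotrivial complexes, $h:\calE_k(G)\to CF_b(G,p)$ is surjective for \emph{every} finite group $G$, so the only thing to produce is a $G$-stable Borel-Smith function with the prescribed parities. The missing idea is the transfer: given $u\in (B(S)^G)^\times$, Tornehave/Yal\c{c}in gives $f\in CF_b(S)$ with $\dim(f)=u$; $G$-stability of $u$ forces $f(H)\equiv f(K)\bmod 2$ on $G$-conjugate subgroups; and since $[N_G(S):S]$ is odd, $\tr^{N_G(S)}_S f \equiv f \bmod 2$ while being $N_G(S)$-stable, hence $G$-stable when $N_G(S)$ controls fusion. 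That single parity-preserving averaging argument replaces your entire two-stage ``realize over $N$, then glue'' plan, and without it (or a worked-out substitute) your argument does not close.

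For the dihedral case your text is a plan rather than a proof: you correctly note that fusion is not controlled by $N_G(S)$ and that one must argue by hand, but you exhibit neither the fusion-stable subgroups of $CF_b(D_{2^n})$ nor of $B(D_{2^n})^\times$, nor any complexes hitting a generating set. The paper's proof is a concrete case analysis over the three realizable fusion systems $\calF_S(S)$, $\calF_S^I$, $\calF_S^{II}$ on $D_{2^n}$, writing down explicit $\Z$-bases of $CF_b(G)^{\calF}$ and $\F_2$-bases of $((B(S)^G)^\times)^{\calF}$ and checking that $\dim$ carries one onto the other; some such computation (or an argument replacing it) is required, since the transfer trick genuinely fails here and the surjectivity is not formal.
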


    In particular, $\Lambda$ is surjective over $\F_2$ for any group $G$ with abelian Sylow 2-subgroups, or more generally, \textit{resistant} Sylow 2-subgroups. When $k$ has characteristic 2 and has more than 2 elements, $\Lambda$ is usually non-surjective. On the other hand, we find no counterexamples of $\Lambda$ not surjective when $k = \F_2$. We pose the question of whether $\Lambda: \calE_{\F_2}(G) \to O(T(\F_2 G))$ is always surjective.

    The situation for $p$ odd is rather different. We derive the following results in Section \ref{sec:podd}.

    \begin{theorem*}
        Let $p$ be an odd prime and $k$ be a field contained in $\overline{\F}_p$. If $G$ is $p$-nilpotent or has a cyclic Sylow $p$-subgroup, then $\Lambda: \calE_k(G) \to O(T(kG))$ is surjective. On the other hand, for every odd prime $p$ and integer $i \geq 2$, there exist finite groups with elementary abelian Sylow $p$-subgroups of rank $i$ such that $\Lambda: \calE_k(G) \to O(T(kG))$ is not surjective.
    \end{theorem*}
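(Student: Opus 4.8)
The plan is to tackle the theorem's three assertions separately, since they have different flavors. The first two (surjectivity for $p$-nilpotent groups and for groups with cyclic Sylow $p$-subgroups) are positive results, while the third (existence of counterexamples in rank $\geq 2$) is a negative result. Throughout, the key technical tool will be the characterization of $O(T(kG))$ via Boltje and Carman's coherent character tuples, together with the local structure of endotrivial complexes established in the second author's prior work \cite{M24c}.

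**For the $p$-nilpotent case,** I would exploit the fact that a $p$-nilpotent group $G$ has a normal $p$-complement $N$ with $G/N \cong S$ a Sylow $p$-subgroup. The idea is that the $p$-permutation theory of $G$ should reduce, via inflation and the Brauer construction, to that of $S$, where we already know $\Lambda$ is surjective (the $p$-group case from \cite{M24c}). Concretely, I would try to show that every orthogonal unit $u \in O(T(kG))$ can be realized as the Euler characteristic of an endotrivial complex built by inflating an endotrivial complex of $S$ along $G \twoheadrightarrow S$, possibly after correcting by a sign/one-dimensional representation coming from $G/[G,G]$. The local data of an endotrivial complex is recorded by its restrictions to Brauer quotients at each $p$-subgroup; for a $p$-nilpotent group these quotients are governed entirely by the Sylow subgroup, so the compatibility conditions defining a coherent character tuple should collapse to conditions already solvable over $S$.

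**For the cyclic Sylow case,** the structure is rigidly constrained: when $S$ is cyclic, the poset of nontrivial $p$-subgroups is a chain, and the fusion system is determined by $N_G(S)$ acting on $S$ (Burnside's theorem gives control of fusion in the abelian, hence cyclic, case). Here I would build the desired endotrivial complex explicitly by prescribing its indecomposable trivial source summands at each level of the chain, using the classification of trivial source modules with cyclic vertex. The orthogonal units of $T(kG)$ in this setting are parametrized by sign choices (degrees) at each subgroup in the chain subject to congruences mod the relevant inertial quotients; I expect these congruences to match exactly the constraints that an endotrivial complex's Euler characteristic must satisfy, yielding surjectivity. The main subtlety will be verifying that the character-theoretic orthogonality condition $u^{-1} = u^*$ is equivalent to the geometric condition that the complex is endotrivial, i.e. has invertible image in the homotopy category.

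**The counterexamples will be the hardest part.** For these I would search among groups with elementary abelian Sylow $p$-subgroup of rank $i \geq 2$, where the fusion system is rich enough that $O(T(kG))$ is strictly larger than the image of $\Lambda$. The strategy is to produce an explicit orthogonal unit $u \in O(T(kG))$ whose local degree data (the ranks of Brauer quotients, read off as an integer-valued function on conjugacy classes of $p$-subgroups) violates a necessary congruence or Borel--Smith--type condition that every genuine endotrivial complex must satisfy by \cite{M24c}. The obstruction should be detected at the level of the $p$-group: an endotrivial complex of $G$ restricts to an endotrivial complex of $S$, whose dimension function is a Borel--Smith function, and I would arrange $u$ so that its restriction to $S$ fails this constraint while still defining a legitimate coherent character tuple for $G$. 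For a concrete family, I anticipate using extensions of an elementary abelian $p$-group by a group acting with enough orbits on the subgroup lattice to loosen the coherence conditions — likely groups of the form $(C_p)^i \rtimes H$ for a suitable $H$ — and then a finite computation (or a structural argument comparing $|O(T(kG))|$ with $|\mathrm{im}\,\Lambda|$) pins down the failure. Identifying the right family and the exact numerical obstruction is where the real work lies; the positive cases are comparatively formal once the reduction to the Sylow subgroup is set up.
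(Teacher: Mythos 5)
Your overall architecture (three separate arguments, reduction to local data via the Boltje--Carman decomposition) matches the paper, but each leg has a gap and the third is the serious one. For the $p$-nilpotent case you propose to realize units by inflating endotrivial complexes from $S\cong G/N$; the paper instead rests on a fact you do not isolate, namely that by the Frobenius normal $p$-complement theorem every automizer $N_G(P)/C_G(P)$ is a $p$-group, so every $\Hom(N_G(P)/PC_G(P),k^\times)$ is trivial and the whole reduced character-tuple component $\calR_{G,k}$ vanishes. Hence $O(T(kG))\cong\Z/2\Z\times\Hom(G,k^\times)$, which is already hit by $k[1]$ and the complexes $k_\omega[0]$; without noticing this collapse, your claim that the coherence conditions ``collapse to conditions solvable over $S$'' is unjustified. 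For the cyclic case your sketch is in the right spirit but omits the two ingredients that make the paper's induction work: Burnside's fusion theorem forces $\Aut_G(P)\cong\Aut_G(S)$ for every nontrivial $P\le S$ (so the coherence conditions are vacuous and $\calR_G$ is a product of cyclic $p'$-groups of order $\Phi(S)$), and the faithful generator of $\partial\calT\calE(G)$ is produced as a truncated periodic resolution of length $2\Phi(S)$ using periodicity of cohomology --- merely ``prescribing trivial source summands at each level of the chain'' does not by itself produce an endotrivial complex with the required homology character. Your worry about verifying $u^{-1}=u^*$ is moot, since $\Lambda$ landing in $O(T(kG))$ is a cited prior result.

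The counterexample mechanism you propose would fail outright. For $p$ odd the dimension component of $O(T(kG))$ is only $(B(S)^G)^\times=\{\pm 1\}$, and the mod-$2$ Borel--Smith condition forces the h-mark function of any endotrivial complex to have constant parity; so there is no orthogonal unit whose ``local degree data'' violates a Borel--Smith-type constraint --- that component is always surjected onto by $k[1]$. The genuine obstruction lives in the character-tuple component $\calR_G$, and the paper detects it by a generator count: the free part of $\calE_k(G)$ modulo $k[1]$ has rank $c(G)-1$, so if $\calR_G$ requires at least $c(G)$ generators then $\Lambda$ cannot be surjective. The substantive work is then to exhibit, for $G=N_{S_{2p}}(S)$ with $S$ elementary abelian of rank $2$ (and the rank-$n$ analogue $(C_p\rtimes C_{p-1})^n\rtimes S_n$), an extra coherent tuple supported at $S$ itself --- a sign character of $N_G(S)/C_G(S)$ --- by checking that the coherence conditions arising from each cyclic subgroup of $S$ impose no constraint on it. Your fallback of comparing $|O(T(kG))|$ with $|{\im}\,\Lambda|$ points in the right direction, but without replacing the Borel--Smith obstruction by this rank-and-coherence analysis the negative result does not go through.
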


    An exact criterion for when $\Lambda$ is non-surjective for $p$ odd appears to be unfeasible, but our example demonstrates the expectation that for groups $G$ with Sylow $p$-subgroups having sufficiently many generators and whose fusion in $G$ is sufficiently complex, $\Lambda$ should not be surjective.

    Finally, we consider the kernel of the Lefschetz homomorphism in Section \ref{sec:kernel}. For $p = 2$, the kernel of $\Lambda$ is easily determined, and for any prime $p$, when $G$ has a cyclic Sylow $p$-subgroup, we compute the kernel directly. For $p$ odd, we determine a necessary condition for an endotrivial complex $C$ to belong to the kernel of the Lefschetz homomorphism, but an exact characterization of the kernel again remains out of reach. We note that Bachmann in his Ph.D. thesis \cite{Bac16} performed similar computations with more restrictive settings and in the category of Artin motives.

    We note a potential application of these computations. Two related questions of interest are whether a $p$-permutation equivalence implies a splendid Rickard equivalence, and if, given a $p$-permutation equivalence, it is induced by a splendid Rickard equivalence. At present, no examples of two block algebras which are $p$-permutation equivalent but not splendid Rickard equivalent are known. Similarly, no examples of $p$-permutation equivalences not induced by splendid Rickard equivalences are known either. In fact, the second author showed in \cite{M24c} that every $p$-permutation autoequivalence of the group algebra $kP$, with $P$ a $p$-group, is induced by a splendid Rickard autoequivalence. Since endotrivial complexes induce splendid Rickard equivalences and orthogonal units of the trivial source ring induce $p$-permutation equivalences, we hope the results of this paper can be used to construct examples of $p$-permutation equivalences not induced from splendid Rickard equivalences.

    \textbf{Acknowledgments:} The second author thanks Lancaster University for their hospitality during the visit in which this paper was initiated, as well as UC Santa Cruz for their generous travel support which made the visit possible. He also thanks Jesper Grodal for some very helpful conversations illuminating the arguments used in \cite{Tor84}, and Robert Boltje for his advice and suggestions leading to the topic of this paper.

    \section{Preliminaries on endotrivial complexes}

    Throughout this paper, all modules are assumed finitely generated. Recall that a \textit{permutation $kG$-module} is a $kG$-module with a $G$-stable basis, and a \textit{$p$-permutation $kG$-module} is a direct summand of a permutation module. A $kG$-module $M$ is a $p$-permutation module if and only if $\Res^G_S M$ is a permutation $kS$-module, where $S$ denotes a Sylow $p$-subgroup of $G$. Indecomposable $p$-permutation modules are often referred to as \textit{trivial source modules}, as an indecomposable $kG$-module is $p$-permutation if and only if it has trivial source.

    Given any $p$-subgroup $P$ of $G$, we have a monoidal functor $-(P): {}_{kG}\triv\to {}_{k[N_G(P)/P]}\triv$, the \textit{Brauer construction} (also referred to as the \textit{Brauer quotient}, and in \cite{BG25}, this is naturally isomorphic to the \textit{modular fixed points} functor). The Brauer construction satisfies the property that for any $G$-set $X$, one has a natural isomorphism of $k[N_G(P)/P]$-modules $k[X](P) \cong k[X^P]$. For in-depth overviews of $p$-permutation modules or the Brauer construction, we refer the reader to \cite[Chapter 5]{L181} or \cite{CL23}.

    \begin{definition}
        Let $C$ be a bounded chain complex of $p$-permutation $kG$-modules. We say $C$ is \textit{endotrivial} if \[C^* \otimes_k C \simeq k[0],\] where $\simeq$ denotes homotopy equivalence of chain complexes. In other words, $C$ is an invertible object in the bounded homotopy category of $p$-permutation $kG$-modules, $K^b({}_{kG}\triv).$ We let $\calE_k(G)$ denote the Picard group of $K^b({}_{kG}\triv)$. In other words, $\calE_k(G)$ denotes the set of isomorphism classes of endotrivial complexes in $K^b({}_{kG}\triv).$ Then $\calE_k(G)$ is an abelian group with group addition induced by $[C] + [D] = [C \otimes_k D]$ and inverse given by $-[C] = [C^*]$. We may abusively write $C \in \calE_k(G)$, in which case we assume $C$ is the unique indecomposable representative of the equivalence class $[C] \in \calE_k(G)$. Indeed, any endotrivial complex has a unique noncontractible direct summand.
    \end{definition}

    \begin{construction}
        Let $C$ be an endotrivial complex of $kG$-modules. Observe that for any $p$-subgroup $P$ of $G$, $C(P)$ is an endotrivial complex of $p$-permutation $k[N_G(P)/P]$-modules. Additionally, it follows from the K\"{u}nneth formula that any endotrivial complex has nonzero homology concentrated in a unique degree, with that homology having $k$-dimension 1. Given any $p$-subgroup $P$ of $G$, let $h_C(P)$ denote the unique integer for which $H_{h_C(P)}(C(P))\neq 0$.

        We denote the set of $p$-subgroups of $G$ by $s_p(G)$. Then $h_C: s_p(G) \to \Z$ is a superclass function (i.e. a function from a subset of subgroups of $G$ to $\Z$ that agrees on conjugates) on the $p$-subgroups of $G$. We call $h_C$ the \textit{h-mark function of $C$}. We denote the additive group of superclass functions on $p$-subgroups of $G$ by $CF(G,p)$.

        Since the nonzero homology of $C$ has $k$-dimension 1, it is isomorphic to a $kG$-module of the form $k_\omega$, for some $\omega \in \Hom(G, k^\times)$. Given any $p$-subgroup $P$ of $G$, let $\calH_C(P)$ denote the unique homomorphism $\omega \in \Hom(N_G(P)/P, k^\times)$ such that $k_\omega \cong H_{h_C(P)}(C(P))$.
    \end{construction}

    \begin{prop}
        \begin{enumerate}
            \item The assignment $C \mapsto h_C$ induces a well-defined group homomorphism $h: \calE_k(G) \to CF(G,p)$.
            \item The assignment $C \mapsto H_{h_C(P)}(C(P))$ induces a well-defined group homomorphism $\calH_C: \calE_k(G) \mapsto \Hom(N_G(P)/P,k^\times)$. This further induces a well-defined group homomorphism:
            \begin{align*}
                \calH: \calE_k(G) &\to\left( \prod_{P \in s_p(G)} \Hom(N_G(P)/P, k^\times)\right)^G\\
                C &\mapsto \{\calH_C(P)\}_{P \in s_p(G)}
            \end{align*}
        \end{enumerate}

    \end{prop}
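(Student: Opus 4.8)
The plan is to reduce every assertion to three functorial properties of the Brauer construction $-(P)$ together with the Künneth formula over the field $k$. Concretely, I will use that: (i) $-(P)$ is an additive functor, hence carries chain homotopy equivalences to chain homotopy equivalences; (ii) $-(P)$ is monoidal, supplying a natural isomorphism $(C \otimes_k D)(P) \cong C(P) \otimes_k D(P)$ of complexes of $k[N_G(P)/P]$-modules; and (iii) $-(P)$ is compatible with $G$-conjugation, in that for $g \in G$ there is a natural isomorphism ${}^g(C(P)) \cong C({}^gP)$ intertwining the conjugation isomorphism $N_G(P)/P \xrightarrow{\sim} N_G({}^gP)/{}^gP$. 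The Künneth formula over $k$ reads $H_n(X \otimes_k Y) \cong \bigoplus_{i+j=n} H_i(X) \otimes_k H_j(Y)$ with no correction terms, which is the engine behind both homomorphism statements.

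For part (a), well-definedness on $\calE_k(G)$ follows from (i): if $C \simeq C'$ then $C(P) \simeq C'(P)$, so they have isomorphic homology in each degree and thus $h_C(P) = h_{C'}(P)$, making $h_C$ depend only on the class $[C]$. The superclass property is property (iii): the equivalence ${}^g(-)$ preserves homological degree, so $h_C({}^gP) = h_C(P)$. To see $h$ is a homomorphism, I combine (ii) with Künneth. Since, by the Construction, each of $C(P)$ and $D(P)$ has one-dimensional homology concentrated in a single degree, the only surviving Künneth summand of $H_*((C\otimes D)(P)) \cong H_*(C(P) \otimes_k D(P))$ sits in degree $h_C(P)+h_D(P)$ and is one-dimensional; hence $h_{C \otimes D} = h_C + h_D$.

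For part (b) and fixed $P$, well-definedness of $\calH_C(P)$ is immediate, as homotopy equivalence induces an isomorphism of the one-dimensional homology modules, and isomorphic one-dimensional $k[N_G(P)/P]$-modules determine the same character. The homomorphism property is again the Künneth computation: via (ii), $H_{h_{C \otimes D}(P)}((C \otimes D)(P)) \cong H_{h_C(P)}(C(P)) \otimes_k H_{h_D(P)}(D(P)) \cong k_{\calH_C(P)} \otimes_k k_{\calH_D(P)} \cong k_{\calH_C(P)\,\calH_D(P)}$, since the character of a tensor product of one-dimensional modules is the product of the characters; thus $\calH_{C\otimes D}(P) = \calH_C(P)\,\calH_D(P)$. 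To assemble these into $\calH$ landing in the $G$-fixed part of the product, I verify that the tuple $\{\calH_C(P)\}_P$ is $G$-equivariant: property (iii) identifies $H_{h_C({}^gP)}(C({}^gP))$ with ${}^g\bigl(H_{h_C(P)}(C(P))\bigr)$ along the conjugation isomorphism, so $\calH_C({}^gP)$ is carried to $\calH_C(P)$ under the $G$-action on $\prod_{P} \Hom(N_G(P)/P, k^\times)$, exactly as required for membership in the fixed-point subgroup.

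The main obstacle — and the only genuinely substantive point — is pinning down property (iii) carefully enough that the conjugation isomorphism on Brauer quotients is compatible with the identification $\Hom(N_G({}^gP)/{}^gP, k^\times) \cong \Hom(N_G(P)/P, k^\times)$ that defines the $G$-action on the product. Once this naturality is established (it is standard on $G$-sets through $k[X^{{}^gP}] \cong {}^g(k[X^P])$ and propagates to $p$-permutation complexes by functoriality), the remaining verifications are formal consequences of monoidality and Künneth, resting on the already-established facts that $C(P)$ is endotrivial and that an endotrivial complex has one-dimensional homology concentrated in a single degree.
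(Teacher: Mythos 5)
Your proof is correct. The paper states this proposition without giving a proof, treating it as routine; your verification via the additivity, monoidality, and conjugation-compatibility of the Brauer construction combined with the K\"unneth formula over $k$ is precisely the standard argument the paper implicitly relies on, and all the steps (including the only delicate one, the compatibility of the conjugation isomorphism on Brauer quotients with the $G$-action on the product of character groups) are handled correctly.
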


    \subsection{Borel-Smith functions}

    The classification of endotrivial complexes relies heavily on Borel-Smith functions, which we now introduce. We note that we adopt the convention of only considering Borel-Smith functions valued on $p$-subgroups of a group, but Borel-Smith functions are also regarded superclass functions over all subgroups in the literature.

    \begin{definition}
        A \textit{Borel-Smith} function is a superclass function $f \in CF(G,p)$ satisfying the following three conditions, which we call the Borel-Smith conditions.
        \begin{enumerate}
            \item If $p$ is odd, then for any subquotient $T/S$ of $G$ of order $p$, $f(T) \equiv f(S) \mod 2$.
            \item If $p = 2$, then for any sequence of subgroups $H \trianglelefteq K \trianglelefteq L \leq N_G(H)$, with $[K:H] = 2$, $f(K) \equiv f(H) \mod 2$ if $L/K$ is cyclic of order 4 and $f(K) \equiv f(H) \mod 4$ is $L/K$ is quaternion of order 8.
            \item For any elementary abelian subquotient $T/S$ of $G$ of rank 2, the equality \[f(S) - f(T) = \sum_{S < X < T} \big(f(X) - f(T)\big)\] holds.
        \end{enumerate}
        The collection of Borel-Smith functions forms an additive subgroup $CF_b(G,p)$ of $CF(G,p)$. In fact, if $G$ is a $p$-group, then under the identification $CF(G) \cong B^*(G) := \Hom(B(G), \Z)$, $CF_b(G)$ forms a rational $p$-biset subfunctor of $CF(G)$. See \cite[Proposition 3.7]{BoYa07} for details. We refer the reader to \cite{Bou10} for an overview of biset functors, although a background understanding of biset functors is unnecessary for this paper.

    \end{definition}

    \begin{remark}
        Let $f\in CF_b(G,p)$ be a Borel-Smith function for $G$. Observe that if $P$ is a non-cyclic $p$-subgroup of $G$ (with the convention that the trivial subgroup is cyclic), that the value $f(P)$ is determined entirely by subgroups $Q < P$. Indeed, since $P$ is non-cyclic, there exists a normal subgroup $R \trianglelefteq P$ such that $P/R$ is elementary abelian of rank 2. Therefore, the $\Z$-rank of $CF_b(G,p)$ is bounded by the number of conjugacy classes of cyclic $p$-subgroups of $G$.
    \end{remark}

    In fact, for $p$-groups, this bound is known to be an equality.

    \begin{prop}{\cite[Proposition III.5.10, page 214]{tD87}}
        Let $G$ be a $p$-group. The $\Z$-rank of $CF_b(G)$ is exactly the number of conjugacy classes of cyclic subgroups of $G$.
    \end{prop}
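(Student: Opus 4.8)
The plan is to establish the \emph{reverse} inequality to the one furnished by the Remark. The Remark already gives $\rk_\Z CF_b(G) \le n$, where $n$ denotes the number of conjugacy classes of cyclic subgroups of the $p$-group $G$, so it suffices to exhibit $n$ elements of $CF_b(G)$ that are $\Z$-linearly independent. The natural supply of Borel--Smith functions comes from fixed-point dimension functions of real representations: for a real $G$-representation $V$, the superclass function $\delta_V\colon P \mapsto \dim_\R V^P$ satisfies the three Borel--Smith conditions of the Definition. This is the classical theorem of Borel and Smith (see \cite{tD87}); indeed conditions (a)--(c) are precisely engineered to cut out the functions arising this way. I would take as my candidate functions the dimension functions $\delta_C := \delta_{\R[G/C]}$ of the real permutation modules $\R[G/C]$, as $C$ ranges over a chosen set of representatives of the $n$ conjugacy classes of cyclic subgroups of $G$; these lie in $CF_b(G)$ since $\R[G/C]$ is a genuine real representation.

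To prove the $\delta_C$ are linearly independent I would evaluate them on the cyclic subgroups of $G$ and show that the resulting $n\times n$ matrix is invertible over $\Q$. Concretely, $\delta_C(D) = \dim_\R \R[G/C]^D$ is the number of $D$-orbits on $G/C$, so by Burnside's orbit-counting lemma $\delta_C(D) = \tfrac{1}{|D|}\sum_{h\in D} |(G/C)^{\langle h\rangle}|$. Writing $m(E,C) := |(G/C)^E|$ for the marks, this identity exhibits the matrix $\big(\delta_C(D)\big)_{D,C}$ as a product $T\cdot M$, where $M = \big(m(E,C)\big)_{E,C}$ is the restriction of the table of marks of $G$ to its cyclic subgroups, and $T = (T_{D,E})$ records $T_{D,E} = |{\text{generators of }E}|/|D|$ whenever $E\le D$ (with $E,D$ cyclic) and $0$ otherwise. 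The matrix $T$ is triangular with respect to inclusion and has nonzero diagonal entries, hence is invertible. The full table of marks is triangular with respect to the subconjugacy order and has nonzero diagonal entries $|N_G(C):C|$, so it is invertible, and its principal submatrix $M$ indexed by the cyclic classes inherits both properties and is therefore invertible as well. Consequently $TM$ is invertible, the $\delta_C$ are $\Z$-linearly independent, and $\rk_\Z CF_b(G) \ge n$. Combined with the Remark this yields the claimed equality.

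The genuinely substantive input here is the Borel--Smith theorem guaranteeing $\delta_V \in CF_b(G)$; once that is granted, the linear-independence argument is essentially a bookkeeping exercise resting on the invertibility of the (cyclic part of the) table of marks. I therefore expect the main obstacle to be the clean verification that fixed-point dimension functions of real representations satisfy conditions (a)--(c) — equivalently, locating and citing this classical fact in a form exactly matching the Definition's normalization. A fully self-contained alternative would replace the table-of-marks computation by Artin's theorem, which identifies $n$ with the number of irreducible $\Q G$-representations and lets one instead argue that $\delta\colon R_\Q(G)\otimes\Q \to CF_b(G)\otimes\Q$ is injective via Möbius inversion over the poset of cyclic subgroups; I would present the table-of-marks version as the primary route since it produces the independent functions explicitly and keeps the group $G$ arbitrary $p$-group without invoking rationality of characters.
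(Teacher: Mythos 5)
Your proof is correct, but note that the paper itself offers no proof of this statement: it is imported wholesale from tom Dieck \cite[Proposition III.5.10]{tD87}, where the argument runs through the real representation ring --- surjectivity of $\dim\colon R_\R(G)\to CF_b(G)$ together with the description of $\ker(\dim)$ by Adams operations --- which is also the route the paper takes later when it extracts a basis of $CF_b(G)$ from the irreducible real representations. Your route to the lower bound is genuinely different and more elementary: you need only the containment $\dim(V)\in CF_b(G)$ for real $V$ (classical Smith theory), applied to the permutation modules $\R[G/C]$, and then the invertibility of the evaluation matrix $\bigl(\delta_C(D)\bigr)$, which you obtain by factoring it as $T\cdot M$ with $M$ the cyclic part of the table of marks. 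Two remarks. First, your formula $T_{D,E}=\phi(|E|)/|D|$ for $E\le D$ is valid exactly because $D$ is a cyclic $p$-group and hence contains at most one subgroup from each conjugacy class $[E]$ (its unique subgroup of order $|E|$); for non-cyclic $D$ one would have to weight by the number of subgroups of $D$ conjugate to $E$, so it is worth flagging why no such weight appears here. Second, take care not to quote the paper's later Remark that $\im(\dim)=CF_b(G)$ as your input: that equality is the hard surjectivity statement, proved in tom Dieck alongside the very rank formula you are establishing, whereas the inclusion you actually use is independent of it and non-circular. With those caveats the argument is complete, and it has the pleasant feature of exhibiting the $n$ independent Borel--Smith functions explicitly as the orbit-counting functions $D\mapsto|D\backslash G/C|$; tom Dieck's approach, by contrast, buys the stronger fact that $\dim$ is surjective onto $CF_b(G)$, which the paper needs elsewhere.
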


    \begin{remark}
        Borel-Smith functions were originally considered in the context of homotopy representations of spheres, see \cite[Section III.5: Borel-Smith functions]{tD87} for details. Purely representation-theoretically, Borel-Smith functions arise from real representations (or equivalently, representation spheres) in the following manner. Given a field $\F$ and a $\F G$-module $V$, define the superclass function $\dim(V)$ as follows: \[\dim(V)(H) := \dim_\F V^H.\] If $\F G$ is semisimple, then \[\dim(V)(H) = \frac{1}{|H|} \sum_{h \in H} \chi_V(h),\] where $\chi_V$ is the character of $V$. We call $\dim$ the \textit{dimension homomorphism}. In this case, $\dim: R_\F(G) \to CF(G)$ is a group homomorphism. Regarding a representation $V$ as a representation sphere $S^V$ via one-point compactification, the dimension homomorphism $\dim(V)(H)$ simply returns the dimension of the representation sphere $S^{V^H}$.

        If $G$ is a $p$-group and $\F = \R$, then the image of $\dim: R_\R(G) \to CF(G)$ is exactly $CF_b(G)$, the group of Borel-Smith functions. In fact, $\dim: R_\R \to CF_b$ is a morphism of rational $p$-biset functors.
    \end{remark}

    In fact, one can determine precisely when a Borel-Smith function arises from a real representation.

    \begin{prop}{\cite{tD87}}
        Let $f \in CF_b(G)$ be a Borel-Smith function. Then $f$ is monotonically decreasing (with respect to the poset of subgroups of $G$) and non-negative if and only if $f = \dim_\R V$ for a real representation $V$.
    \end{prop}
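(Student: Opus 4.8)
The statement is an ``if and only if,'' and the forward implication is the routine one. If $f = \dim_\R V$ for a real representation $V$, then for subgroups $H \le K$ we have $V^K \subseteq V^H$, so $f(K) = \dim_\R V^K \le \dim_\R V^H = f(H)$ and $f$ is monotonically decreasing; and $f(H) = \dim_\R V^H \ge 0$ for every $H$, so $f$ is non-negative. (We are already given $f \in CF_b(G)$, so no Borel--Smith verification is needed in this direction.) All the work is in the converse: given a monotone, non-negative $f \in CF_b(G)$, I must produce an honest real representation $V$ with $\dim_\R V = f$.

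For the converse I would argue by induction on $|G|$, building $V$ one orbit type at a time and processing conjugacy classes of subgroups from largest to smallest. The surjectivity of $\dim$ onto $CF_b(G)$ recorded in the preceding remark (for $p$-groups, and its companion for general $G$ in \cite{tD87}) already yields a \emph{virtual} real representation $W$ with $\dim_\R W = f$, so the real content is to show $W$ may be taken effective. First, since $G$ is the top of the subgroup poset and $f$ is monotone decreasing, $f(G)$ is the minimum value of $f$; as $f(G)\ge 0$, subtracting $f(G)$ copies of the trivial representation replaces $f$ by a monotone, non-negative Borel--Smith function vanishing at $G$, so I may assume $f(G)=0$. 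Inductively, suppose I have a genuine representation $V_0$ whose dimension function already agrees with $f$ on every subgroup larger than a given class $[H]$ in the processing order. The ``defect'' at $H$ is $d_H := f(H) - \dim_\R V_0^H$, and I wish to cancel it by adding a representation whose nonzero fixed spaces occur only at $H$ and its subconjugates. Such building blocks are supplied by inducing up from the normalizer: for a representation of $N_G(H)$ inflated from $N_G(H)/H$, the Mackey formula controls the fixed-point dimensions of $\Ind_{N_G(H)}^G(-)$ and concentrates them at subconjugates of $H$, while induction on $|N_G(H)/H| < |G|$ realizes the local data required there.

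The crux is twofold, and both parts are where the Borel--Smith conditions enter decisively. First, one cannot in general find a representation with a nonzero fixed space at $H$ but none at every strictly larger subgroup (fixed-point-free building blocks simply may not exist), so the corrections at several subgroups must be arranged \emph{simultaneously}; the rank-two relation of the third Borel--Smith condition is exactly the inclusion--exclusion identity $f(S)-f(T)=\sum_{S<X<T}\bigl(f(X)-f(T)\bigr)$ satisfied by fixed-point dimensions $\dim_\R U^X$ across the intermediate subgroups, and it is what makes the simultaneous bookkeeping consistent. Second, each defect $d_H$ must be a non-negative integer of the correct parity: monotonicity forces $d_H \ge 0$ and non-negativity anchors the base value $f(G)\ge 0$, while the available atomic summands contribute fixed-point jumps whose sizes are dictated by the real-representation type of the relevant irreducibles of $N_G(H)/H$. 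Passing an order-$p$ subquotient with $p$ odd forces an even jump (the first condition, reflecting that the nontrivial real irreducibles of a cyclic $p$-group are two-dimensional), and the cyclic-of-order-$4$ and quaternionic cases for $p=2$ produce precisely the mod $2$ and mod $4$ constraints of the second condition (the complex and quaternionic Frobenius--Schur types contributing multiples of $2$ and $4$). The main obstacle is therefore organizing the top-down induction so that cancelling the defect at $H$ does not disturb the already-matched larger subgroups, and verifying that the three Borel--Smith congruences match exactly the fixed-point contributions of the real, complex, and quaternionic irreducible types so that all forced multiplicities are non-negative integers; once this matching is confirmed, $V = V_0 \oplus \bigoplus_H U_H$ satisfies $\dim_\R V = f$ and the induction closes.
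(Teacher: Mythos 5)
The paper itself offers no proof of this proposition --- it is quoted directly from \cite{tD87} --- so there is nothing internal to compare against; the question is only whether your argument stands on its own. Your forward implication is complete and correct. The converse, however, is a plan rather than a proof: the two points you yourself label ``the crux'' are exactly the content of the theorem, and neither is established. First, you assert that monotonicity forces each defect $d_H = f(H) - \dim_\R V_0^H$ to be non-negative, but monotonicity of $f$ only gives $f(H) \ge f(K)$ for $H \le K$; it gives no upper bound on $\dim_\R V_0^H$. The building blocks you add to correct the values at larger subgroups $K$ (representations induced from $N_G(K)$) generally have nonzero fixed points at smaller subgroups $H$, and you give no argument that their accumulated contribution does not overshoot $f(H)$. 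This is precisely where the rank-two relation must be used quantitatively rather than invoked as ``what makes the bookkeeping consistent.'' Second, the assertion that the congruence conditions match exactly the fixed-point jumps contributed by the real, complex and quaternionic irreducible types of the relevant subquotients is deferred with ``once this matching is confirmed''; confirming it \emph{is} the theorem. As written, the hard direction cannot be reconstructed from your text.

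Two further points. Your parenthetical claiming a ``companion for general $G$'' to the surjectivity of $\dim$ onto $CF_b(G)$ is wrong: the identification $\im(\dim) = CF_b(G)$ is a $p$-group statement (as the remark preceding the proposition makes explicit), and for general finite groups the Borel--Smith conditions are necessary but not sufficient for membership in the image of $\dim$; the proposition must be read in the $p$-group context, and your starting virtual representation $W$ is only guaranteed to exist there. Separately, your processing order ``from largest to smallest'' conflates two distinct partial orders --- order of the subgroup versus containment of $H$ --- and the inductive hypothesis ``$V_0$ already agrees with $f$ on every subgroup larger than $[H]$'' only controls the defect at $H$ through subgroups \emph{containing} $H$, so the induction scheme itself needs to be set up more carefully before the two main gaps can even be addressed.
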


    \begin{prop}
        Let $G$ be a $p$-group and let $V_1, \dots, V_n$ denote the irreducible real representations of $G$. Then the set of associated dimension functions $\{f_1,\dots, f_n\}$, after removing duplicates, forms a $\Z$-basis of $CF_b(G)$.
    \end{prop}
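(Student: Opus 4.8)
The plan is to reduce the statement to a rank count and then settle that count using the rational representation ring. First, observe that the irreducible real representations $V_1,\dots,V_n$ form a $\Z$-basis of $R_\R(G)$, and that by the Remark above the dimension homomorphism $\dim\colon R_\R(G)\to CF_b(G)$ is surjective when $G$ is a $p$-group. Hence $f_1,\dots,f_n$ generate $CF_b(G)$, and after deleting repetitions we obtain a finite generating set $\mathcal F$ of the free abelian group $CF_b(G)$. By the cited Proposition III.5.10 of \cite{tD87}, $CF_b(G)$ is free of rank $r$, the number of conjugacy classes of cyclic subgroups of $G$. A generating set of a free abelian group of rank $r$ is a $\Z$-basis as soon as it has exactly $r$ elements (a surjection $\Z^r\twoheadrightarrow\Z^r$ is an isomorphism); since $|\mathcal F|\ge r$ automatically, everything reduces to proving that there are \emph{at most} $r$ distinct dimension functions among the $f_i$, equivalently that the distinct ones are $\Q$-linearly independent.

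For this I would pass through the rational representation ring. By the Remark preceding the statement, a Borel--Smith function is determined by its values on conjugacy classes of cyclic subgroups, so evaluation embeds $CF_b(G)$ into $\prod_{[C]}\Z$ over the cyclic classes, and I regard each dimension function as the vector $\bigl(\dim(-)^C\bigr)_{[C]}$. The key elementary identity is $\dim_\R V^C=\langle \Res^G_C\chi_V,\,1_C\rangle$, the multiplicity of the trivial character in the restriction; since this is a non-negative integer, it is invariant under the Galois action on characters. Artin's induction theorem supplies the count on the rational side: the permutation modules $\Q[G/C]$ with $C$ cyclic span $R_\Q(G)\otimes\Q$, there are exactly $r$ of them, and $r=\dim_\Q\bigl(R_\Q(G)\otimes\Q\bigr)$, so they form a basis. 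Consequently the multiplicity matrix $\bigl(\dim_\Q U^C\bigr)_{U,C}$, whose columns express the $\Q[G/C]$ in the basis of irreducible rational representations, is invertible, and the dimension functions $\dim_\Q U_1,\dots,\dim_\Q U_r$ of the irreducible rational representations are linearly independent.

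It remains to match the real and rational dimension functions. For each irreducible rational representation $U$, the complex constituents of $U\otimes_\Q\C$ form a single Galois orbit with constant Frobenius--Schur indicator; by the Galois invariance noted above they all share one dimension function, and it follows that every irreducible real constituent of $U\otimes_\Q\R$ carries one and the same dimension function $g_U$, with $\dim_\Q U=\dim_\R(U\otimes_\Q\R)$ a positive integral multiple of $g_U$. Thus $g_{U_1},\dots,g_{U_r}$ are positive rational multiples of the linearly independent vectors $\dim_\Q U_i$, hence are themselves linearly independent and pairwise distinct; and since every irreducible real representation occurs in exactly one $U\otimes_\Q\R$, the set of distinct dimension functions of $V_1,\dots,V_n$ is precisely $\{g_{U_1},\dots,g_{U_r}\}$. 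Therefore there are exactly $r$ of them, they are linearly independent, and by the first paragraph they form a $\Z$-basis of $CF_b(G)$.

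The main obstacle is this last step, controlling the realification: one must verify that the irreducible real constituents of a single rational irreducible all carry the same dimension function, which is where Galois invariance of the fixed-point multiplicities and constancy of the Frobenius--Schur indicator on a Galois orbit do the work. The bookkeeping between complex, real, and rational constituents (real versus complex versus quaternionic type) is the only delicate point; the $p$-group hypothesis enters not here but through the two cited inputs, namely surjectivity of $\dim$ onto $CF_b(G)$ and the rank formula of \cite{tD87}.
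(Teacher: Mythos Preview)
Your argument is correct and complete, but it takes a genuinely different route from the paper.  The paper invokes \cite[Proposition III.5.9]{tD87}, which says that $\ker(\dim\colon R_\R(G)\to CF_b(G))$ is generated by the differences $V-\psi^k(V)$ for $k$ coprime to $|G|$.  Since the Adams operations permute the irreducible representations, this identifies the quotient $R_\R(G)/\ker(\dim)$ with the free abelian group on Adams orbits, and the distinct $f_i$ are exactly one per orbit.  Your argument instead passes through $R_\Q(G)$: Artin induction gives linear independence of the $\dim_\Q U_j$, and you then show every real irreducible constituent of a fixed $U_j\otimes_\Q\R$ has the same dimension function by Galois invariance and constancy of the Frobenius--Schur indicator.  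The two approaches are secretly the same phenomenon, since the Adams operations $\psi^k$ with $(k,|G|)=1$ are precisely the Galois twists on characters, so Adams orbits of complex irreducibles coincide with Galois orbits; your rational--real matching is thus a hands-on reproof of the kernel description the paper cites.  The paper's route is a one-line citation; yours is more self-contained but requires the real/complex/quaternionic case analysis you flag at the end.

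One small imprecision worth noting: the matrix $(\dim_\Q U_j^C)$ is not literally the change-of-basis matrix from the $U_j$ to the $\Q[G/C]$, because Frobenius reciprocity over $\Q$ involves the endomorphism ring $\End_{\Q G}(U_j)$, i.e.\ a Schur-index factor.  The two matrices differ by a positive diagonal scaling of the rows, so invertibility of one gives invertibility of the other and your conclusion stands; but the sentence identifying them should be softened.
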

    \begin{proof}
        Since the image of the dimension homomorphism is precisely $CF_b(G)$, it suffices to show that the set $\calB := \{f_1, \dots, f_n\}$ is linearly independent after removing duplicates.  \cite[Proposition III.5.9, page 213]{tD87} asserts that $\ker(\dim)$ is generated by elements of the form $V - \psi^k(V)$, where $V$ is an irreducible real representation, $\psi^k$ is the $k$-th Adams operation, and $k$ is coprime to $|G|$. Therefore, the duplicates in $\calB$ arise from Adams operation conjugates, and it follows that any set of real irreducible representations which are not Adams operation conjugates will correspond to a linearly independent set of Borel-Smith functions.
    \end{proof}

    We next show that even if $G$ is not a $p$-group, $CF_b(G,p)$ has $\Z$-rank the number of conjugacy classes of $p$-subgroups of $G$. Given a subgroup $H$ of $G$, say $f \in CF_b(H,p)$ is \textit{$G$-stable} if, for all pairs of $G$-conjugate $p$-subgroups $Q_1, Q_2$ of $H$, $f(Q_1) = f(Q_2)$. Let $CF_b(H,p)^G$ be the subgroup of $CF_b(H,p)$ consisting of $G$-stable elements. Note that given a Sylow $p$-subgroup $S$ of $G$, we have an identification $CF_b(S)^G = CF_b(G,p)$, which we use without further mention.

    \begin{lemma}\label{lem:gstablefromcyclics}
        Let $H$ be a subgroup of $G$ and let $f \in CF_b(H,p)$. If $f$ is $G$-stable at all cyclic $p$-subgroups of $H$, then $f$ is $G$-stable.
    \end{lemma}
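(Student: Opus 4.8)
The plan is to prove the statement by strong induction on the order of the $p$-subgroups involved, using Borel-Smith condition (3) as the mechanism that transports $G$-stability from cyclic subgroups up to all subgroups. Unwinding the definition, I must show that whenever $Q_1, Q_2 \le H$ are $p$-subgroups with $Q_2 = g Q_1 g^{-1}$ for some $g \in G$, one has $f(Q_1) = f(Q_2)$. Since $G$-conjugate subgroups have the same order, I induct on $n := |Q_1| = |Q_2|$. When $Q_1$ (equivalently $Q_2$) is cyclic --- in particular in the base case $n = 1$ --- the equality $f(Q_1) = f(Q_2)$ is exactly the hypothesis that $f$ is $G$-stable at cyclic $p$-subgroups, so there is nothing to prove. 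The substance is therefore the case where $Q_1$ is non-cyclic.

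In that case, as recorded in the remark above, there is a normal subgroup $R_1 \trianglelefteq Q_1$ with $Q_1/R_1$ elementary abelian of rank $2$. Conjugating by $g$ gives $R_2 := g R_1 g^{-1} \trianglelefteq Q_2$ with $Q_2/R_2 \cong Q_1/R_1$, and the assignment $X \mapsto g X g^{-1}$ is a lattice isomorphism carrying the $p+1$ subgroups $X$ with $R_1 < X < Q_1$ bijectively onto the subgroups $Y$ with $R_2 < Y < Q_2$. Now write out Borel-Smith condition (3) for the rank-$2$ subquotient $Q_1/R_1$, and again for $Q_2/R_2$, reindexing the latter sum along $X \mapsto g X g^{-1}$. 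Every subgroup appearing in these two relations other than $Q_1, Q_2$ themselves --- namely $R_1$, $R_2$, and the intermediate $X$ and $g X g^{-1}$ --- is a $p$-subgroup of $H$ of order strictly less than $n$, and the two members of each conjugate pair are $G$-conjugate; hence by the induction hypothesis $f(R_1) = f(R_2)$ and $f(X) = f(g X g^{-1})$ for every intermediate $X$. Substituting these equalities makes the two instances of condition (3) coincide except that $f(Q_1)$ is replaced by $f(Q_2)$. Subtracting them collapses the sums and leaves \[p\big(f(Q_1) - f(Q_2)\big) = 0,\] whence $f(Q_1) = f(Q_2)$, completing the induction.

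The argument is mostly bookkeeping once the induction is set up, and I do not expect a serious obstacle. The one point genuinely requiring attention --- and the reason the statement is phrased for subgroups of $H$ --- is the verification that for each subgroup $X$ entering condition (3) at $Q_1$, its conjugate $g X g^{-1}$ again lies in $H$, so that both $f(X)$ and $f(g X g^{-1})$ are defined and the induction hypothesis applies to the pair. This is immediate from $X \le Q_1 \le H$ and $g X g^{-1} \le Q_2 \le H$, but it is precisely this containment that lets $G$-stability propagate upward through the subgroup lattice: condition (3), which expresses the value of $f$ at a non-cyclic $p$-subgroup in terms of its values at proper subgroups, is the engine that makes the cyclic data suffice.
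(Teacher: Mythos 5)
Your proof is correct and follows essentially the same route as the paper's: induction on the order of the conjugate pair, with cyclic subgroups handled by hypothesis and non-cyclic ones by applying Borel--Smith condition (3) to a rank-two elementary abelian quotient $Q_i/R_i$ and invoking the induction hypothesis on the smaller subgroups. You merely make explicit the algebra (the cancellation yielding $p\bigl(f(Q_1)-f(Q_2)\bigr)=0$) that the paper summarizes as ``$f(Q_i)$ is uniquely determined by the values at $R_i$ and the intermediate subgroups.''
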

    \begin{proof}
        We prove this by induction on the order of $p$-subgroups of $H$. This is true by assumption on all cyclic $p$-subgroups, in particular, it is true for all subgroups of order 1 and $p$. Now suppose $Q_1, Q_2$ are two $p$-subgroups of $H$ of order at least $p^2$ that are $G$-conjugate. If $Q_1, Q_2$ are cyclic, then $f(Q_1) = f(Q_2)$ by assumption. Otherwise, both are non-cyclic, and therefore have subgroups $R_1$ of $Q_1$ and $R_2$ of $Q_2$ of index $p^2$ that are $G$-conjugate and for which $Q_i/R_i \cong C_p \times C_p$ for $i \in \{1, 2\}$. It follows by induction that since $f(Q_i)$ is uniquely determined by the values $f(R_i)$ and $f(X_i)$ for all intermediate subgroups $R_i < X_i < Q_i$ for $i \in \{1, 2\}$, $f(Q_1) = f(Q_2)$, as desired.
    \end{proof}

    \begin{lemma}\label{lem:diagonalbasis}
        Let $G$ be a $p$-group, and let $\{H_1,\dots, H_r\}$ be a collection of representatives of all conjugacy classes of cyclic subgroups of $G$. Then $CF_b(G)$ has a linearly independent set of $r$ elements $\{f_1, \dots, f_r\}$ such that the following holds; for all $i\neq j \in \{1, \dots, r\}$, we have $f_i(H_i) \neq 0$ and $f_i(H_j) = 0$.
    \end{lemma}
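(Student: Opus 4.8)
The plan is to avoid constructing the $f_i$ by hand: the congruence conditions (1) and (2) in the definition of Borel-Smith functions obstruct prescribing values freely, so a direct construction would be delicate. Instead I would argue abstractly via the lattice structure of $CF_b(G)$, reducing the statement to the elementary observation that a full-rank sublattice of $\Z^r$ contains a nonzero integer multiple of each standard basis vector.

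First I would introduce the evaluation map
\[
\Phi: CF_b(G) \to \Z^r, \qquad f \mapsto (f(H_1), \dots, f(H_r)),
\]
which is well-defined since Borel-Smith functions are $\Z$-valued superclass functions and $H_1,\dots,H_r$ are representatives of the conjugacy classes of cyclic subgroups. The key input is that $\Phi$ is injective. By the Remark following the definition of Borel-Smith functions, the value of any $f \in CF_b(G)$ on a non-cyclic $p$-subgroup $P$ is determined by its values on proper subgroups of $P$: choosing $R \trianglelefteq P$ with $P/R$ elementary abelian of rank $2$, condition (3) applied to the subquotient $P/R$ yields the relation $p\,f(P) = \sum_{R < X < P} f(X) - f(R)$, in which every subgroup on the right is strictly smaller than $P$. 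An induction on $|P|$ then shows that any $f$ vanishing on all cyclic subgroups vanishes identically, so $f$ is determined by $\Phi(f)$.

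Next I would invoke the Proposition of \cite{tD87} (III.5.10), which asserts that the $\Z$-rank of $CF_b(G)$ is exactly $r$. Combined with the injectivity of $\Phi$, this makes $L := \Phi(CF_b(G))$ a full-rank sublattice of $\Z^r$, hence of finite index $d := [\Z^r : L]$. Since the finite abelian group $\Z^r/L$ has order $d$, every element is annihilated by $d$, so $d\,e_i \in L$ for each standard basis vector $e_i$. Setting $f_i := \Phi^{-1}(d\,e_i) \in CF_b(G)$, we obtain $f_i(H_i) = d \neq 0$ while $f_i(H_j) = 0$ for all $j \neq i$, exactly as required; and the $f_i$ are linearly independent because their images $d\,e_1, \dots, d\,e_r$ are.

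The only genuine subtlety — and the reason the statement asks merely for a linearly independent set with $f_i(H_i)\neq 0$, rather than a unimodular basis with $f_i(H_i)=1$ — is that the Borel-Smith congruences generally force a nontrivial factor into the off-diagonal clearing; the finite-index argument absorbs this automatically, which is what makes the lattice-theoretic route preferable to an explicit one. I expect the step requiring the most care in the write-up to be the injectivity induction, where one must confirm that condition (3) genuinely expresses $f(P)$ purely in terms of strictly smaller subgroups so that the induction closes; this is guaranteed by the relation $p\,f(P) = \sum_{R<X<P} f(X) - f(R)$ derived above.
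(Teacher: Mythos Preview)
Your proof is correct and follows essentially the same approach as the paper: both arguments use the evaluation map to functions on cyclic-subgroup representatives, observe it is injective (since values on non-cyclic subgroups are determined by condition (3)) and of full rank $r$ (by tom Dieck's result), and then pull back scalar multiples of the standard basis vectors. The only cosmetic difference is that the paper tensors with $\Q$ and inverts the resulting isomorphism $\Q CF_b(G) \to \Q^r$, whereas you phrase the same step as a finite-index sublattice argument over $\Z$; these are equivalent formulations of the same idea.
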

    \begin{proof}
        To prove this, it suffices to show that the $\Q$-vector space $\Q CF_b(P) := \Q \otimes_\Z CF_b(P)$ has a basis satisfying the same property (note this is basis since $CF_b(P)$ has rank equal to $r$). Let $c(P)$ denote the set of cyclic subgroups of $P$, and let $CF_b(P,c)$ denote the set of functions $c(P) \to \Z$ satisfying the Borel-Smith conditions. We have an obvious projection map $\pi: \Q CF_b(P) \to \Q CF_b(P,c)$.

        We claim $\pi$ is an isomorphism. First, we show $\pi$ is injective. Indeed, any element $f \in \ker (\pi)$ satisfies $f(C) = 0$ for any cyclic subgroup $C$ of $P$. However, since the Borel-Smith conditions completely determine the value of $f$ on non-cyclic subgroups of $P$, it follows that $f = 0$. Thus $\pi$ is injective. On the other hand, $CF_b(P,c)$ has rank at most $r$ by construction, so $\pi$ is surjective as well, and thus an isomorphism. Taking the inverse image of the standard basis of $\Q CF_b(P,c)$ via $\pi$ gives a basis of $\Q CF_b(P)$ which satisfies the desired property, and we are done.
    \end{proof}

    Denote the number of conjugacy classes of cyclic $p$-subgroups of a finite group $G$ by $c(G)$.

    \begin{theorem}
        Let $H$ be a subgroup of $G$, let $\{K_1, \dots, K_r\}$ be a collection of representatives of the $G$-conjugacy classes of cyclic $p$-subgroups of $H$, and let $\{K_1,\dots, K_r, \dots, K_s\}$ be a collection of representatives of the $H$-conjugacy classes of cyclic $p$-subgroups of $H$. Then, the following hold.
        \begin{enumerate}
            \item $CF_b(H,p)^G$ has a linearly independent set of $r$ elements $\{f_1,\dots, f_r\}$ such that the following holds; for all $i \neq j \in \{1,\dots, r\}$, we have $f_i(K_i) \neq 0$ and $f_i(K_j) = 0$.
            \item In particular, if $H$ contains a Sylow $p$-subgroup of $G$, then $CF_b(H,p)^G$ has $\Z$-rank $c(G)$.
        \end{enumerate}
    \end{theorem}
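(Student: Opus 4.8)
The plan is to reduce the statement to a diagonalization of Borel--Smith functions on \emph{cyclic} $p$-subgroups, leveraging the $p$-group case recorded in Lemma~\ref{lem:diagonalbasis}. Fix a Sylow $p$-subgroup $S$ of $H$ and recall the identification $CF_b(H,p) = CF_b(S)^H$. The engine is the restriction-to-cyclic-subgroups map $\rho$: running the argument of Lemma~\ref{lem:diagonalbasis} for the $p$-group $S$ shows that $\rho$ induces a $\Q$-linear isomorphism from $\Q\otimes CF_b(S)$ onto the space $\Q^{c(S)}$ of superclass functions on the cyclic subgroups of $S$ (the congruences in Borel--Smith conditions (1)--(2) are invisible over $\Q$, so only the rank, which is the number of conjugacy classes of cyclic subgroups of $S$, enters here).

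First I would upgrade this to an \emph{equivariant} isomorphism. For any overgroup acting by fusion on $S$ --- in particular $H$ itself and $G$ --- a superclass function on the cyclic subgroups of $S$ is stable (constant on fusion orbits) if and only if its unique Borel--Smith extension is stable; the nontrivial direction is exactly Lemma~\ref{lem:gstablefromcyclics}, applied once with the ambient group $H$ and once with $G$, and extended from $\Z$- to $\Q$-coefficients by linearity (its proof uses only the $\Q$-linear relation (3) and downward induction). Consequently $\rho$ restricts to isomorphisms $\Q\otimes CF_b(H,p) = \Q\otimes CF_b(S)^H \cong \Q^{s}$ and $\Q\otimes CF_b(H,p)^G = \Q\otimes CF_b(S)^G \cong \Q^{r}$, where $s$ and $r$ count the $H$- and $G$-fusion orbits of cyclic subgroups of $S$, i.e.\ the $H$- and $G$-conjugacy classes of cyclic $p$-subgroups of $H$ (every such subgroup is conjugate into $S$). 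In particular $\operatorname{rk}_\Z CF_b(H,p)^G = r$.

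For part (a) I would then pass back to the integral lattice via the evaluation map $CF_b(H,p)^G \to \Z^r$, $f \mapsto (f(K_1),\dots,f(K_r))$. This map is injective: a $G$-stable function is determined on all cyclic $p$-subgroups by its values at the $G$-class representatives $K_1,\dots,K_r$, and its values on non-cyclic subgroups are in turn determined by the Borel--Smith relations. By the rank computation its image $L$ is a full-rank sublattice of $\Z^r$; letting $m$ be the exponent of the finite group $\Z^r/L$, each multiple $m e_i$ lies in $L$, and pulling back gives $f_1,\dots,f_r \in CF_b(H,p)^G$ with $f_i(K_i) = m \neq 0$ and $f_i(K_j)=0$ for $j \neq i$. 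These are manifestly linearly independent, giving (a). For part (b), if $H$ contains a Sylow $p$-subgroup of $G$ then $S$ is Sylow in $G$, so every cyclic $p$-subgroup of $G$ is $G$-conjugate into $S \leq H$; hence the $G$-classes of cyclic $p$-subgroups of $H$ biject with those of $G$, giving $r = c(G)$ and thus $\operatorname{rk}_\Z CF_b(H,p)^G = c(G)$.

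The main obstacle is the equivariance step: one must know that prescribing fusion-stable values on the cyclic $p$-subgroups yields a Borel--Smith function that is \emph{globally} fusion-stable, and this must hold simultaneously for two distinct groups ($H$ and $G$) acting on the same Sylow subgroup $S$. This is precisely the content of Lemma~\ref{lem:gstablefromcyclics}, and once it is in hand the remaining work --- identifying fusion orbits with the class counts $r$ and $s$, and the integral bookkeeping to convert the $\Q$-diagonalization into an honest integral diagonal family $\{f_i\}$ --- is routine.
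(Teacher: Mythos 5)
Your proposal is correct and follows essentially the same route as the paper: both arguments rest on the rational diagonalization at cyclic subgroups from Lemma~\ref{lem:diagonalbasis} and on Lemma~\ref{lem:gstablefromcyclics} to detect $G$-stability on cyclic subgroups, with part (b) reduced to (a) plus the upper bound on the rank. The only cosmetic difference is in how the integral diagonal family is extracted: the paper directly sums the diagonal basis elements $g_j$ over each $G$-fusion orbit (which is automatically $G$-stable on cyclics, hence $G$-stable), whereas you pass through the evaluation lattice in $\Z^r$ and scale by the exponent of the quotient --- both are routine and valid.
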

    \begin{proof}
        (b) follows from (a) since if (a) holds, $CF_b(H,p)^G$ has rank at least $r$, but $CF_b(H,p)^G$ must have rank at most $r$ since Borel-Smith functions are determined by their values at cyclic $p$-subgroups. To show (a), it again suffices to show $\Q CF_b(H,p)^G := \Q \otimes_\Z CF_b(H,p)^G$ has a basis satisfying the same property. Lemma \ref{lem:diagonalbasis} asserts $\Q CF_b(H,p)$ has a basis consisting of elements $\{g_1,\dots, g_s\}$ such that $g_i(K_i) = 1$ and $g_i(K_j) = 0$ when $i \neq j$. For $i \in \{1, \dots, r\}$, we define $f_i \in CF_b(H,p)^G$ as follows:
        \[f_i = \sum_{j = 1}^s \gamma^G(i,j) g_j, \text{  with  } \gamma^G(i,j) = \begin{cases} 1 & K_i, K_j \text{ are $G$-conjugate}\\ 0 & \text{otherwise.} \end{cases}\]
        Since $K_1,\dots, K_r$ all lie in distinct conjugacy classes, $\{f_1,\dots f_r\}$ is a linearly independent set. By construction, all $f_i$ are $G$-stable on the cyclic $p$-subgroups of $H$, therefore Lemma \ref{lem:gstablefromcyclics} implies that all $f_i$ are $G$-stable. Since $\Q CF_b(H,p)^G$ has dimension at most $r$, $\{f_1,\dots, f_r\}$ is a basis of $\Q CF_b(H,p)^G$, as desired.
    \end{proof}

    \subsection{The classification of endotrivial complexes}

    \begin{theorem}{\cite{M24c}}
        Let $G$ be a finite group. The following statements hold.
        \begin{enumerate}
            \item $\ker (h): \calE_k(G) \to CF(G,p)$ is equal to the torsion subgroup of $\calE_k(G)$, which is identified to the group $\Hom(G,k^\times)$ by the assignment $\omega \mapsto (k_\omega)[0]$.
            \item $\im (h) = CF_b(G,p)$. In particular, $\calE_k(G)$ has $\Z$-rank equal to the number of conjugacy classes of cyclic $p$-subgroups of $G$.
        \end{enumerate}
        If $G$ is a $p$-group, then we have an isomorphism $\calE_k(G) \cong CF_b(G)$, and $\calE_k$ is a rational $p$-biset functor via the transport of structure $\calE_k \cong CF_b$.
    \end{theorem}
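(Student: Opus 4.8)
The plan is to prove assertions (a) and (b) as the two substantive claims, then deduce the rank statement and the $p$-group isomorphism as formal consequences. The central technical engine throughout will be the standard detection principle for $p$-permutation complexes: a morphism in $K^b({}_{kG}\triv)$ is a homotopy equivalence if and only if its image under every Brauer construction $-(P)$ is a quasi-isomorphism, equivalently a bounded complex $D$ of $p$-permutation modules is contractible iff $D(P)$ is acyclic for every $p$-subgroup $P$. Combined with the Künneth-type fact already recorded—that each $C(P)$ has one-dimensional homology concentrated in degree $h_C(P)$—this reduces statements about endotrivial complexes to statements about the h-mark functions $h_C$ and the local homology characters $\calH_C(P)$.

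For (a), I would first check that $\omega \mapsto (k_\omega)[0]$ is a homomorphism $\Hom(G,k^\times) \to \calE_k(G)$: each $k_\omega$ is one-dimensional, hence a trivial source module with $(k_\omega)^* \otimes k_\omega \cong k$, and $(k_\omega)[0]$ has h-mark identically $0$, so it lands in $\ker h$. Since $H_0\big((k_\omega)[0]\big) \cong k_\omega$, the composite with the global homology map $\calH(1)$ is the identity, so this is a split injection into $\ker h$, and its image is torsion because $\Hom(G,k^\times)$ is finite. Conversely, given $C \in \ker h$, set $\omega = \calH_C(1)$; then $C \otimes k_{\omega^{-1}}$ lies in $\ker h$ with trivial global homology, and I would use the detection principle to show it is homotopy equivalent to $k[0]$, the key input being that its local homology characters $\calH_C(P)$ all vanish. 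Hence $\ker h$ equals the image of $\Hom(G,k^\times)$. Finally, since $h$ maps into the torsion-free group $CF(G,p)$, the torsion subgroup of $\calE_k(G)$ is contained in $\ker h$; as $\ker h$ is finite, this inclusion is an equality. Controlling the local characters $\calH_C(P)$ in the kernel is the delicate point here, and I expect it to require the Brauer-construction machinery rather than mere exactness, since the Brauer construction is not exact on homology of complexes.

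For (b), the inclusion $\im h \subseteq CF_b(G,p)$ amounts to verifying the three Borel-Smith conditions for every $h_C$. I would reduce each condition to a local computation: for a subquotient $T/S$, pass to the endotrivial complex $C(S)$ over $N_G(S)/S$ and restrict to the relevant small $p$-group ($C_p$, $C_p \times C_p$, $C_4$, or $Q_8$), reducing to parity and additivity relations for endotrivial complexes over these groups, which are the algebraic shadow of Smith theory for fixed-point spheres. The rank-two additivity condition is an inclusion–exclusion identity over the subgroup poset of $C_p \times C_p$, and I expect it to be the most delicate of the three. For the reverse inclusion $CF_b(G,p) \subseteq \im h$, I would first treat $p$-groups by realizing Borel-Smith functions through representation spheres: a real representation $V$ yields an endotrivial complex from the $p$-permutation chain complex of $S^V$, whose Brauer construction at $P$ computes the chains of $S^{V^P}$ and whose h-mark is therefore $\dim V$; since the dimension functions of real representations span $CF_b(G)$ by the Proposition above, $h$ surjects onto $CF_b(G)$. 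For general $G$ I would use the identification $CF_b(S)^G = CF_b(G,p)$ together with a gluing/descent argument producing a genuine $kG$-endotrivial complex realizing a given $G$-stable Borel-Smith function; this descent is the principal obstacle, since realizability over the Sylow subgroup does not formally guarantee realizability over $G$.

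The rank assertion is then immediate: $\ker h$ is finite, so $\rk \calE_k(G) = \rk \im h = \rk CF_b(G,p) = c(G)$ by the Theorem on the rank of $CF_b(H,p)^G$. For the final claim, if $G$ is a $p$-group then $\Hom(G,k^\times) = 1$ because $k^\times$ has order prime to $p$, so (a) gives $\ker h = 1$ and (b) gives $\im h = CF_b(G)$; hence $h$ is an isomorphism $\calE_k(G) \xrightarrow{\sim} CF_b(G)$, and the rational $p$-biset functor structure on $\calE_k$ is obtained by transport of structure along this isomorphism, with the compatibility under restriction, inflation, and (tensor) induction verified on the level of h-marks.
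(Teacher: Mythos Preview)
The paper does not prove this theorem: it is quoted verbatim from \cite{M24c} and used as a black box, so there is no proof in the paper against which to compare your proposal directly.

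That said, two remarks on your plan. First, your outline for (a) is sound in spirit, but the step you flag as ``delicate'' is the whole content: showing that $h_C\equiv 0$ and $\calH_C(1)=1$ force $C\simeq k[0]$ requires knowing that the natural map $C\to k[0]$ (or its dual) remains a quasi-isomorphism after every Brauer construction, and your detection principle only converts this into the statement $\calH_C(P)=1$ for all $P$, which is exactly what you have not argued. One route is to analyze the unique indecomposable summand degree by degree, but you should expect this to need a genuine structural input about endotrivial complexes rather than pure formalities.

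Second, for the surjectivity half of (b) your proposed ``descent from the Sylow'' is the real obstruction, and the paper's surrounding discussion suggests that the argument in \cite{M24c} does \emph{not} go this way. Instead it passes through the larger group $\calE_k^{V(\calF_G)}(G)$ of relatively $V(\calF_G)$-endotrivial complexes, which is spanned by the two-term relative syzygies $C_P = (k[G/P]\to k)$ and whose h-mark map surjects onto all of $CF(G,p)$; one then identifies $\calE_k(G)\cap {}^\Omega\calE_k^{V(\calF_G)}(G)$ with $CF_b(G,p)$. This bypasses any gluing over the fusion system and produces honest $kG$-complexes directly. Your representation-sphere approach is morally correct for $p$-groups (and the paper confirms this in the Remark following the definition of $\calE_k^\Omega(G)$), but extending it to arbitrary $G$ by descent would require, at minimum, a stability argument for $G$-CW structures on spheres that you have not supplied and that is not obviously available.
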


    \begin{corollary}\label{cor:etrivclassification}
        We have a split exact sequence of abelian groups \[0 \to \Hom(G,k^\times) \to \calE_k(G) \to CF_b(G,p) \to 0.\] In particular, $\rk_\Z \calE_k(G) = c(G)$, which is independent of the field extension  $k/\F_p$.
    \end{corollary}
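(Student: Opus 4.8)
The plan is to read the short exact sequence directly off the homomorphism $h \colon \calE_k(G) \to CF(G,p)$ using the two parts of the preceding classification Theorem. First I would invoke part (a), which identifies $\ker(h)$ with $\Hom(G,k^\times)$ via the assignment $\omega \mapsto (k_\omega)[0]$; this supplies the injection on the left. Part (b) states that $\im(h) = CF_b(G,p)$, so $h$ surjects onto $CF_b(G,p)$. Assembling these two facts yields exactness of
\[0 \to \Hom(G,k^\times) \to \calE_k(G) \xrightarrow{h} CF_b(G,p) \to 0.\]

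Next I would establish the splitting. The group $CF_b(G,p)$ is a subgroup of $CF(G,p)$, and the latter is free abelian of finite rank, being the group of $\Z$-valued functions on the finitely many conjugacy classes of $p$-subgroups of $G$. Since a subgroup of a finitely generated free abelian group is again free abelian, $CF_b(G,p)$ is free. A short exact sequence of abelian groups whose rightmost term is free necessarily splits (free abelian groups are projective), which gives the claimed splitting.

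For the rank statement, I would note that part (a) identifies the kernel $\Hom(G,k^\times)$ with the full torsion subgroup of $\calE_k(G)$, so it contributes nothing to the $\Z$-rank; hence $\rk_\Z \calE_k(G) = \rk_\Z CF_b(G,p)$. Applying the earlier Theorem on $CF_b(H,p)^G$ with $H = G$ (so that $CF_b(G,p)^G = CF_b(G,p)$) computes this rank as $c(G)$, the number of conjugacy classes of cyclic $p$-subgroups of $G$. Finally, $c(G)$ is defined purely in terms of $G$ and $p$ with no reference to $k$, so the rank is manifestly independent of the field extension $k/\F_p$.

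The only genuine content — and it is mild — is the splitting, which rests on recognizing that $CF_b(G,p)$ is free abelian; the remainder is a direct repackaging of the cited classification together with the rank computation established earlier in this subsection, so I do not anticipate any serious obstacle.
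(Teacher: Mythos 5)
Your proposal is correct and is exactly the intended derivation: the paper states this as an immediate corollary of the preceding classification theorem (it gives no separate proof), with exactness read off from parts (a) and (b), the splitting coming from freeness of $CF_b(G,p)$ as a subgroup of the finite-rank free group $CF(G,p)$, and the rank computed from the earlier theorem on $CF_b(H,p)^G$ together with the fact that the kernel is torsion. Nothing further is needed.
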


    That the rank of $\calE_k(G)$ is independent of the size of the field extension $k/\F_p$ suggests that there is a certain free subgroup of $\calE_k(G)$ which is realized over the finite field $\F_p$. Indeed, this is the case. Let $k'/k$ be a field extension.  We say that a chain complex of $k'G$-modules $C'$ \textit{descends to $k$} if there exists a chain complex of $kG$-modules $C$ for which $k' \otimes_k C \cong C'$.

    \begin{theorem}
        Let $C$ be an indecomposable endotrivial complex of $k'G$-modules. If $\calH_C(1) = \omega \in \Hom(G,k^\times) \leq \Hom(G, k'^\times)$ for some subfield $k$ of $k'$, then $C$ descends to $k$.
    \end{theorem}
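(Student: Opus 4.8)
The plan is to descend $C$ in two stages: first I would identify the homotopy class $[C] \in \calE_{k'}(G)$ with one pulled back from $\calE_k(G)$ along extension of scalars, and then upgrade this homotopy-level statement to an honest isomorphism of chain complexes. Extension of scalars $D \mapsto k' \otimes_k D$ induces a homomorphism $\calE_k(G) \to \calE_{k'}(G)$ which is compatible with the split exact sequence of Corollary \ref{cor:etrivclassification}: the $h$-mark function is unaffected by field extension (since $k'\otimes_k-$ is exact and commutes with the Brauer construction, one has $H_*(k'\otimes_k C(P)) = k'\otimes_k H_*(C(P))$, so $h_{k'\otimes_k D} = h_D$), while on torsion subgroups the map is the inclusion $\Hom(G,k^\times) \hookrightarrow \Hom(G,k'^\times)$. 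Using surjectivity of $h$ and the field-independence of $CF_b(G,p)$, I would choose $D \in \calE_k(G)$ with $h_D = h_C$, then twist by the one-dimensional module $k_\eta$ for $\eta := \omega\,\calH_D(1)^{-1}$. The crucial point is that $\eta$ is defined over $k$ \emph{precisely because $\omega$ is}, by hypothesis; so $D' := D \otimes_k (k_\eta)[0]$ lies in $\calE_k(G)$ and satisfies $h_{D'} = h_C$ and $\calH_{D'}(1) = \calH_D(1)\,\eta = \omega = \calH_C(1)$.

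Next I would argue that $[k' \otimes_k D'] = [C]$ in $\calE_{k'}(G)$. By the classification theorem, $\ker(h) = \Hom(G,k'^\times)$ is the torsion subgroup, realized by $\zeta \mapsto (k'_\zeta)[0]$, and for such a complex $\calH_{(k'_\zeta)[0]}(1) = \zeta$. Since $\calH(-)(1)$ is a homomorphism, any two endotrivial complexes with equal $h$-mark function differ by a torsion element $(k'_\zeta)[0]$ with $\zeta = \calH(1)$-quotient; hence equal $h$-mark function \emph{and} equal degree-zero homology homomorphism force equality of classes. As $\calH_{k'\otimes_k D'}(1) = \calH_{D'}(1) = \omega = \calH_C(1)$, this gives $k'\otimes_k D' \simeq C$ in $K^b({}_{k'G}\triv)$. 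This first stage is where the hypothesis does all the conceptual work: the obstruction to descent of a class is exactly the cokernel $\Hom(G,k'^\times)/\Hom(G,k^\times)$ of base change, detected by $\calH_C(1)$, and the assumption $\omega \in \Hom(G,k^\times)$ kills it.

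For the second stage I would take $D_0$, the indecomposable (equivalently minimal, i.e.\ contractible-summand-free) representative of $[D']$ over $k$, so that $k' \otimes_k D_0 \simeq C$; it remains to produce a genuine isomorphism. The key observation is that minimality is preserved under base change: for a separable extension $k'/k$ (in particular whenever $k$ is perfect, e.g.\ $k$ algebraic over $\F_p$, which covers the intended setting $k \subseteq k' \subseteq \overline{\F}_p$) one has $J(k'G) = k' \otimes_k J(kG)$. Since the differentials of the minimal complex $D_0$ have image in the radical, the differentials of $k' \otimes_k D_0$ have image in $k'\otimes_k J(kG) \cdot (k'\otimes_k D_0) = J(k'G)(k'\otimes_k D_0)$, so $k'\otimes_k D_0$ is again minimal, hence has no contractible direct summand. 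A minimal complex homotopy equivalent to the minimal complex $C$ must be isomorphic to it, giving $k' \otimes_k D_0 \cong C$ and proving that $C$ descends to $k$.

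The main obstacle, and the only genuinely delicate step, is exactly this passage from homotopy equivalence to an honest isomorphism: a priori $k' \otimes_k D_0$ could acquire a contractible summand that is absent over $k$, and excluding this is what forces the radical to commute with base change, hence the (harmless) separability hypothesis. If one preferred not to assume separability, an alternative would be to note that $h_C$ and $\calH_C(1)$ are $\Gal(k'/k)$-invariant, so ${}^\sigma C \cong C$ for every $\sigma$, assemble these into a descent datum, and invoke the vanishing of the relevant Galois cohomology of finite fields to obtain the $k$-form directly; but I expect the minimality argument above to be both shorter and more transparent.
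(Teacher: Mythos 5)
Your first stage is correct and is genuinely different from the paper's argument: the paper disposes of the whole statement by citing \cite[Corollary 4.4]{M24e} together with the uniqueness of the noncontractible summand, whereas you reconstruct the descent of the homotopy class from the classification theorem (surjectivity of $h$, field-independence of $CF_b(G,p)$, $\ker(h)=\Hom(G,k'^\times)$ detected by $\calH_{(-)}(1)$). That reduction is sound, and it correctly isolates $\Hom(G,k'^\times)/\Hom(G,k^\times)$ as the obstruction. The price of not citing the external result is that you must then do the descent of the actual complex (not just its class) by hand, and that is where the gap is.

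The gap is in your minimality criterion. You assert that a complex of $p$-permutation modules is minimal (has no contractible summand) if and only if its differentials have image in $J(kG)$ times the target. That equivalence holds for complexes of \emph{projective} modules, but $p$-permutation modules are not projective, and the paper's own examples refute the claim: the relative syzygy $k[G/P]\to k$ and the complexes $k[G/H_i]\to k[G/H_i]\to k$ are indecomposable, hence minimal, yet their final differential is surjective onto the simple module $k$, for which $J(kG)\cdot k=0$. So ``$D_0$ minimal $\Rightarrow$ differentials of $D_0$ land in the radical'' is false, and your deduction that $k'\otimes_k D_0$ is minimal collapses at its first step. The statement you actually need --- minimality is preserved by (separable) scalar extension --- is true, but the correct criterion is that a bounded complex over a Krull--Schmidt category is minimal if and only if its differentials lie in the \emph{radical of the additive category}, i.e.\ no component between indecomposable summands is an isomorphism; one must then check that this category radical commutes with separable base change, e.g.\ via $J\bigl(\End_{k'G}(k'\otimes_k X)\bigr)=k'\otimes_k J\bigl(\End_{kG}(X)\bigr)$ applied to $X=(D_0)_i\oplus (D_0)_{i-1}$. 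This is precisely the kind of content packaged in the cited \cite[Corollary 4.4]{M24e}. With that repair (or with your alternative Galois-descent sketch fleshed out) the argument goes through, but as written the second stage does not establish $k'\otimes_k D_0\cong C$.
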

    \begin{proof}
        This is a straightforward consequence of \cite[Corollary 4.4]{M24e}, since any endotrivial complex contains exactly one noncontractible summand.
    \end{proof}

    \begin{example}
        The following example will be relevant in the sequel. Let $G$ be a cyclic $p$-group of order $p^n$, and for $i \in \{0,\dots, n\}$, let $H_n$ denote the unique subgroup of $G$ of order $p^i$. Then $\calE_k(G)$ has a $\Z$-basis $\{C_0,\dots, C_n\}$ with $C_n = k[1]$ and for $i < n - 1$, $C_i = k[G/H_i] \to k[G/H_i] \to k$, with the trivial $kG$-module in degree zero. If $p = 2$, $C_{n-1} = k[G/H_{n-1}] \to k$ and if $p > 2$, then $C_{n-1} = k[G/H_{n-1}] \to k[G/H_{n-1}] \to k$. Each $C_i$ corresponds to the unique irreducible representation of $\R G$ with kernel $H_i$.
    \end{example}

   There are two free subgroups of $\calE_k(G)$ of note, both of which consist entirely of elements with representatives which descend to $\F_p$.

   \begin{definition}
       \begin{enumerate}
           \item Let $\calT\calE_k(G)$ denote the subgroup of $\calE_k(G)$ consisting of endotrivial complexes $C$ satisfying $\calH_C(1) = 1 \in \Hom(G,k^\times)$. Any (unique) indecomposable representative of an element in $\calE_k(G)$ descends to $\F_p$, so we may simply write $\calT\calE(G)$ without confusion. We have an obvious restriction map for any subgroup $H$ of $G$, $\calT\calE(G) \to \calT\calE(H)$, but the Brauer construction does \textit{not} induce a group homomorphism $\calT\calE(G) \to \calT\calE(N_G(P)/P)$ in general for $p$-subgroups $P$ of $G$. Note we have an isomorphism $\calE_k(G) \cong \Hom(G,k^\times) \times \calT\calE(G),$ see \cite[Corollary 4.6]{M24e}.

           \item The following construction was first considered in \cite{M24b} and used in \cite{M24c}. Define the $kG$-module $V(\calF_G) := \bigoplus_{P \in s_p(G)\setminus \Syl_p(G)} k[G/P]$. We say a chain complex $C$ is \textit{relatively $V(\calF_G)$-endotrivial} if $C^* \otimes_k C \simeq (k \oplus M)[0]$ for some $V(\calF_G)$-projective $kG$-module $M$. Note this follows the language convention of \cite{M24c}. Explicitly this means $M$ has no indecomposable direct summands with Sylow vertex.

           One has a corresponding group parametrizing the relatively $V(\calF_G)$-endotrivial complexes, $\calE_k^{V(\calF_G)}(G)$. Each element of $\calE_k^{V(\calF_G)}(G)$ has a unique indecomposable representative that is relatively $V(\calF_G)$-endotrivial. It is easy to verify that for any $p$-subgroup $P$ of $G$ and any relatively $V(\calF_G)$-endotrivial complex $C$, $C(P)$ has nonzero homology in exactly one degree. Therefore, we have a corresponding notion of h-marks and a h-mark homomorphism $h: \calE_k^{V(\calF_G)}(G) \to CF(G,p)$. In fact, this homomorphism is surjective by \cite[Theorem 2.12]{M24c}, and moreover, we have a free subgroup ${}^\Omega\calE_k^{V(\calF_G)}(G)$ of maximal rank spanned by the two-term complexes $C_P := k[G/P] \to k$, where $P$ is a non-Sylow $p$-subgroup of $G$. We call these chain complexes \textit{relative syzygies}.

           Now, we have an obvious inclusion $\calE_k(G) \to \calE_k^{V(\calF_G)}(G)$, therefore $\calE_k(G)$ may be regarded as a subgroup of $\calE_k^{V(\calF_G)}(G)$. Explicitly, it is the subgroup of $\calE_k^{V(\calF_G)}(G)$ consisting of elements whose unique indecomposable representative is endotrivial. Finally, we set $\calE_k^\Omega(G):= \calE_k(G) \cap {}^\Omega\calE_k^{V(\calF_G)}(G)$, the subgroup of $\calE_k^\Omega(G)$ consisting of endotrivial complexes that arise as direct summands of tensor products of relative syzygies. We have an isomorphism $\calE_k^\Omega(G) \cong CF_b(G,p)$.

           It is easy to see that for any element $C \in \calE_k^\Omega(G)$, $\calH_C(S) = 1$ for any Sylow $p$-subgroup $S$. Conversely, determining even $\calH_C(1)$ is computationally quite difficult.
       \end{enumerate}
    \end{definition}

    \begin{remark}
        If $G$ is a $p$-group, $\calE_k(G) \cong CF_b(G)$; morally, all endotrivial complexes arise from representation spheres. A more direct topological argument of this fact is given in \cite[Theorem 7.8]{F25}. In particular, every monotonically decreasing endotrivial complex is realized as the singular (Bredon) cohomology (see e.g. \cite{I73}) of a representation sphere, since every representation sphere has the structure of a $G$-CW-complex. Moreover, every monotonically decreasing endotrivial complex over $G$ arises as a chain complex of $k\Gamma_G$-modules, where $\Gamma_G$ denotes the \textit{orbit category} of $G$. (see e.g. \cite{Y16}).

        It is verified for $p$-groups in \cite{M24c} that restriction and inflation of Borel-Smith functions correspond to restriction of endotrivial complexes, and deflation of Borel-Smith functions corresponds to the Brauer construction. Induction of Borel-Smith functions corresponds to taking traces of representation spheres, which induce endotrivial complexes via Bredon cohomology.
    \end{remark}

    \section{Orthogonal units of the trivial source ring}

    \begin{definition}
        The \textit{trivial source ring} $T(kG)$ is the (split) Grothendieck ring of ${}_{kG}\triv$. Given a $p$-permutation $kG$-module $M$, let $[M]$ denote its image in $T(kG).$ It has, as an abelian group, a free $\Z$-basis spanned by isomorphism classes of indecomposable $p$-permutation $kG$-modules (also called trivial source modules), with addition induced by direct sums: \[[M] + [N] := [M \oplus N].\] Multiplication is induced by the tensor product: \[[M]\cdot [N] := [M \otimes_k N].\]

        The assignment $[M] \mapsto [\Hom_k(M,k)]$ induces a ring involution on $T(kG)$, which we also denote by $(-)^*$. The \textit{orthogonal unit group} of $T(kG)$, denoted $O(T(kG))$, is the subgroup of the unit group $T(kG)^\times$ given by all units $u \in T(kG)^\times$ for which $u\inv = u^*$. Equivalently, $O(T(kG))$ is the torsion subgroup of $T(kG)^\times$, see \cite[Remark 4.1(b)]{BC23}.

        Equivalently, $T(kG)$ is the Grothendieck ring of the tensor-triangulated category $K^b({}_{kG}\triv)$. In this case, the image of any chain complex $C \in K^b({}_{kG}\triv)$, denoted $\Lambda(C)$, in $T(kG)$ is called the \textit{Lefschetz invariant} or \textit{Euler characteristic} of $C$. Explicitly, we have \[\Lambda(C) = \sum_{i \in \Z} (-1)^i[C_i] \in T(kG).\]
    \end{definition}

    \begin{prop}{\cite[Proposition 4.1]{M24a}}
        Let $C$ be an endotrivial complex of $kG$-modules. Then $\Lambda(C)$ is an orthogonal unit of $T(kG)$. Moreover, the assignment $C \mapsto \Lambda(C)$ induces a well-defined group homomorphism \[\Lambda: \calE_k(G) \to O(T(kG)).\]
    \end{prop}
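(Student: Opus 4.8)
The plan is to reduce the statement to three formal properties of the assignment $C \mapsto \Lambda(C) = \sum_i (-1)^i [C_i]$: that it is a homotopy invariant (hence well-defined on $\calE_k(G)$), that it is multiplicative for $\otimes_k$, and that it intertwines the duality $(-)^*$ on complexes with the involution $(-)^*$ on $T(kG)$. Once these are in place the result falls out immediately from the endotriviality relation $C^* \otimes_k C \simeq k[0]$.

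First I would establish that $\Lambda$ descends to homotopy classes, equivalently that it vanishes on contractible complexes. Since ${}_{kG}\triv$ is a Krull--Schmidt (idempotent-complete additive) category, any bounded contractible complex is isomorphic, as a complex, to a direct sum of \emph{elementary} complexes $0 \to M \xrightarrow{\id} M \to 0$, each of which contributes $[M] - [M] = 0$ to the alternating sum. As $C \simeq D$ forces $C$ and $D$ to become isomorphic after adding contractible complexes, we obtain $\Lambda(C) = \Lambda(D)$. (Alternatively, this is automatic from the identification of $T(kG)$ with the Grothendieck ring of the triangulated category $K^b({}_{kG}\triv)$, under which $\Lambda$ is by definition the class map and homotopy equivalent complexes are isomorphic objects; I would note that the two descriptions of $T(kG)$ agree via the alternating-sum formula.)

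Next I would verify multiplicativity and compatibility with duals by direct computation. For the tensor product, $(C \otimes_k D)_n = \bigoplus_{i+j=n} C_i \otimes_k D_j$, so collecting signs gives $\Lambda(C \otimes_k D) = \sum_n (-1)^n \sum_{i+j=n}[C_i \otimes_k D_j] = \left(\sum_i (-1)^i [C_i]\right)\left(\sum_j (-1)^j [D_j]\right) = \Lambda(C)\Lambda(D)$, using that multiplication in $T(kG)$ is induced by $\otimes_k$. For the dual, $(C^*)_n = (C_{-n})^*$, and since $(-1)^{-n} = (-1)^n$ and the involution on $T(kG)$ is additive, $\Lambda(C^*) = \sum_n (-1)^n [(C_{-n})^*] = \left(\sum_m (-1)^m [C_m]\right)^* = \Lambda(C)^*$. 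Finally $\Lambda(k[0]) = [k]$, the multiplicative identity of $T(kG)$.

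With these in hand the conclusion is immediate: if $C$ is endotrivial then applying $\Lambda$ to $C^* \otimes_k C \simeq k[0]$ and using homotopy invariance, multiplicativity, and dual compatibility yields $\Lambda(C)^*\Lambda(C) = \Lambda(C^* \otimes_k C) = \Lambda(k[0]) = 1$, so $\Lambda(C)$ is a unit with $\Lambda(C)\inv = \Lambda(C)^*$, i.e. an orthogonal unit. Well-definedness on $\calE_k(G)$ is Step 1, and since the group law on $\calE_k(G)$ is $[C] + [D] = [C \otimes_k D]$ while that on $O(T(kG))$ is multiplication, multiplicativity shows $\Lambda$ is a group homomorphism. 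I expect no serious obstacle here; the result is formal. The only genuine care is bookkeeping: ensuring the indexing and sign conventions for $C^*$ are chosen so that $\Lambda(C^*) = \Lambda(C)^*$ holds on the nose rather than up to a global sign or shift, and confirming the compatibility of the two descriptions of $T(kG)$ so that the alternating sum genuinely computes the triangulated class.
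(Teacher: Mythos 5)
Your argument is correct and complete. Note that the paper itself offers no proof of this proposition --- it is quoted verbatim from \cite[Proposition 4.1]{M24a} --- and the argument given there is exactly the formal one you describe: homotopy invariance of the alternating sum (via splitting contractible complexes into elementary summands over the Krull--Schmidt category ${}_{kG}\triv$), multiplicativity under $\otimes_k$, compatibility with duality, and then applying $\Lambda$ to $C^* \otimes_k C \simeq k[0]$. Your attention to the sign bookkeeping for $\Lambda(C^*) = \Lambda(C)^*$ and to the agreement of the split and triangulated descriptions of $T(kG)$ is exactly the right place to be careful, and both checks go through as you indicate.
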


    We call $\Lambda$ the \textit{Lefschetz homomorphism}. The structure of the trivial source ring, and its orthogonal unit group, was described by Boltje and Carman in \cite{BC23}. We will summarize their results describing the structure of $O(T(kG))$. It turns out that the structure is partially governed by the \textit{Burnside ring} $B(G)$ of $G$ \cite[Section I.2]{tD87}.

    \begin{definition}
        The \textit{Burnside ring} $B(G)$ of a finite group $G$ is the Grothendieck ring of the category ${}_{G}\mathbf{set}$ of finite $G$-sets. Given a $G$-set $X$, let $[X]$ denote its image in $B(G).$ As an abelian group, $B(G)$ is free with $\Z$-basis given by isomorphism classes of indecomposable $G$-sets, which have the form $G/K$ for subgroups $K$ of $G$. Addition is induced by disjoint unions \[[X] + [Y] := [X \sqcup Y],\] and multiplication is induced by the direct product \[[X]\cdot [Y] := [X\times Y].\]

        Given a $G$-set $X$ and a subgroup $H$ of $G$, the value $|X^H|$ is referred to as the \textit{mark of $X$ at $H$}. Given a $G$-set $X$, we obtain a superclass function $m_X$ given by \[m_X(H) = |X^H|.\] This assignment induces a well-defined group homomorphism $m: B(G) \to CF(G)$, which is referred to as the \textit{mark homomorphism.} This homomorphism is well-known to be injective and full-rank, but not surjective in general.

        The unit group of the Burnside ring, $B(G)^\times$, has image contained in $CF(G)^\times$, which has order $2^{|s(G)|}$, where $s(G)$ denotes a collection of representatives of conjugacy classes of subgroups of $G$. In particular, $B(G)^\times$ is finite dimensional $\F_2$-vector space. The structure of $B(G)^\times$ is famously hard to describe. In fact, tom Dieck proved the following: ``if $G$ has odd order, then $B(G)^\times = \{\pm [G/G]\}$'' can be shown to be equivalent to the Feit-Thompson odd order theorem, see \cite[Theorem 11.2.4]{Bou10}. However, Yal\c{c}in in \cite{Y05} gave a list of generators for $B(P)^\times$ for any $2$-group $P$, which Bouc in \cite{Bo07} refined to a $\F_2$-basis using his theory of biset functors and genetic subgroups. See \cite{Bou10} for a detailed treatment.

        Let $S$ be a Sylow $p$-subgroup of $G$. The \textit{$G$-stable subgroup of $B(S)$}, denoted $B(S)^G$, is the subgroup of $B(S)$ given by all elements $x$ for which $m_x(H_1) = m_x(H_2)$ for all pairs of $G$-conjugate subgroups $H_1, H_2$ of $S$.
    \end{definition}

    \begin{theorem}{\cite[Theorem C]{BC23}}
        Let $S$ be a Sylow $p$-subgroup of $G$. We have a direct product decomposition \[O(T(kG)) \cong (B(S)^G)^\times \times \left(\prod_{P \in s_p(G)} \Hom(N_G(P)/P, k^\times)\right)'\] where the second factor is defined as the set of all $G$-stable tuples \[(\varphi_P) \in \left(\prod_{P \in s_p(G)} \Hom(N_G(P)/P, k^\times)\right)^G\] additionally satisfying \[\varphi_P(xP) = \varphi_{P\langle x_p\rangle}(xP\langle x_p \rangle),\] for all $P \in s_p(G)$ and $x \in N_G(P)$. Here, $x_p$ denotes the \textit{$p$-part of $x$}: for every $x \in G$, there exists a unique $x_p \in G$ and $x_{p'} \in G$ such that $x_p$ is a $p$-element, $x_{p'}$ is a $p'$-element, and $x = x_px_{p'} = x_{p'}x_p$.
    \end{theorem}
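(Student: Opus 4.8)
The plan is to realize $T(kG)$ concretely through its \emph{species} (its ring homomorphisms into a ring of algebraic integers) and then read off the torsion units, using that $O(T(kG))$ is precisely the torsion subgroup of $T(kG)^\times$ (recorded above). Recall that the species of $T(kG)$ are indexed by $G$-conjugacy classes of pairs $(P,s)$ with $P \in s_p(G)$ and $s$ a $p'$-element of $N_G(P)/P$; the species $\tau_{P,s}$ sends $[M]$ to the value at $s$ of the (lifted) Brauer character of the $k[N_G(P)/P]$-module $M(P)$. Taking the product of all species gives an injective ring homomorphism $T(kG) \hookrightarrow \prod_{(P,s)} \O$ whose image is cut out by certain coherence conditions. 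A torsion unit must map to a tuple of roots of unity, so the whole problem reduces to the bookkeeping of which tuples of roots of unity occur, where the involution $[M]\mapsto[M^*]$ acts by complex conjugation of Brauer characters and the torsion condition is encoded by $u\inv = u^*$.

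Next I would separate the species into the two families producing the two direct factors. For $s=1$ the value $\tau_{P,1}([M]) = \dim_k M(P)$ is an \emph{integer}, and restricting to a Sylow $p$-subgroup $S$ identifies it with a Burnside mark: $\Res^G_S M \cong k[X]$ is a genuine permutation module, whence $\dim_k M(P) = |X^P| = m_X(P)$ for $P \le S$. Thus $(\dim_k M(P))_{P\le S}$ is the mark vector of a $G$-stable element of $B(S)$, giving a ring homomorphism $d\colon T(kG)\to B(S)^G$. On torsion units $d$ lands in $(B(S)^G)^\times$, since a rational root of unity is $\pm 1$. I would then construct a section $(B(S)^G)^\times \to O(T(kG))$ by lifting a coherent sign tuple to a torsion unit via the species description, and check that $d$ retracts it. Because any orthogonal unit lying both in the image of this section and in $\ker d$ has trivial marks and so equals $1$, and because $T(kG)$ is commutative, this exhibits $O(T(kG))$ as an internal direct product $\ker d \times (B(S)^G)^\times$.

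It then remains to identify $\ker d$ with the primed product. An orthogonal unit $u$ with $\dim_k u(P) = 1$ for all $P$ has each $u(P)$ of virtual dimension $1$ and torsion, which forces $u(P)$ to be a single linear character $\varphi_P \in \Hom(N_G(P)/P, k^\times)$ with $\tau_{P,s}(u) = \varphi_P(s)$. The assignment $u \mapsto (\varphi_P)_P$ is injective because species separate points. The $G$-conjugation invariance of species yields $G$-stability of the tuple, and the extra relation $\varphi_P(xP) = \varphi_{P\langle x_p\rangle}(xP\langle x_p\rangle)$ is precisely the \emph{second Brauer construction} identity $\tau_{P,\bar x}(u) = \tau_{P\langle x_p\rangle,\bar x}(u)$, which rewrites the Brauer character of $u(P)$ at a class with $p$-part $x_p$ through the iterated Brauer construction at the larger $p$-group $P\langle x_p\rangle$. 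Conversely every coherent character tuple must be shown to be realized, completing the identification.

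The main obstacle is establishing the coherence conditions describing the image of the species map precisely enough to both (i) lift an arbitrary coherent sign tuple to an honest torsion unit of $T(kG)$, thereby realizing all of $(B(S)^G)^\times$ rather than merely the image of $B(G)$, and (ii) prove that the second Brauer construction relation is the \emph{only} constraint tying the $\varphi_P$ together across different $p$-subgroups, so that every $G$-stable tuple satisfying $\varphi_P(xP)=\varphi_{P\langle x_p\rangle}(xP\langle x_p\rangle)$ genuinely arises. Both points rest on a careful analysis of how Brauer characters of Brauer constructions behave under nesting of $p$-subgroups and under $G$-fusion, which is the technical heart of the argument.
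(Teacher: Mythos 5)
This statement is quoted from Boltje--Carman \cite[Theorem C]{BC23}; the paper offers no proof of its own, only the citation followed by a remark unpacking the decomposition via the injective ring homomorphism $\beta_G$ into coherent character tuples. Your species-theoretic outline is essentially that same mechanism: your product of species $\tau_{P,s}$ is $\beta_G$ in different clothing, your $s=1$ family is the identification of $\dim_k u(P)$ with Burnside marks, and your observation that a torsion unit of virtual dimension one in each character ring must be $\pm$ a linear character is exactly how the paper's remark extracts the pair $(\epsilon_P,\varphi_P)$. So the strategy is the right one and matches the source.

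However, as a proof the proposal is a roadmap rather than an argument: the two steps you defer at the end are precisely where all the content of \cite{BC23} lives. For (i), surjectivity onto $(B(S)^G)^\times$ cannot be obtained by linearizing units of $B(G)$ --- the group $(B(S)^G)^\times$ is in general strictly larger than the image of $B(G)^\times$, and producing an honest virtual $p$-permutation module whose Brauer quotients realize a prescribed $G$-stable sign tuple requires an explicit construction (indeed, the difficulty of realizing such units is the entire subject of the surrounding paper, where even realizing them by endotrivial complexes can fail). For (ii), you assert but do not establish that the nesting relation $\varphi_P(xP)=\varphi_{P\langle x_p\rangle}(xP\langle x_p\rangle)$ is the \emph{only} constraint cutting out the image of $\beta_G$ restricted to dimension-one tuples; this requires both the Brauer--Brou\'e character identity $\chi_{\widehat{M}}(x_px_{p'})=\mathrm{br}$-character of $M(\langle x_p\rangle)$ at $x_{p'}$ to show necessity, and a separate realization argument for sufficiency. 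You also implicitly use that the species are jointly injective on $T(kG)$ and are indexed by conjugacy classes of pairs $(P,s)$, which are themselves nontrivial inputs that should be cited. Without (i) and (ii) the direct product decomposition is not established, so the proposal does not yet constitute a proof.
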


    \begin{remark}
        We describe this decomposition in greater detail, following \cite{BC23}. First, \cite[Theorem A]{BC23} states there is an injective ring homomorphism \[\beta_G: T(kG) \to \left(\prod_{P \in s_p(G)} R(K[N_G(P)/P])\right)^G,\] whose image consists of character tuples satisfying the same coherence condition: for each $P \in s_p(G)$ and $x \in N_G(P)$, one has $\chi_P(xP) = \chi_{P\langle x_p\rangle} (xP\langle x_p\rangle)$. In particular, \[(\beta_G(x))_P = K\otimes_\bigO \widehat{x(P)} \in R(K[N_G(P)/P]),\] where $x(P)$ denotes the Brauer construction applied to $x$, and $\widehat{(-)}$ denotes the isomorphism $T(\bigO G) \cong T(kG)$ induced by taking the unique lift of a $p$-permutation $kG$-module to a $p$-permutation $\bigO G$-module. Since the unit group of $R_K(G)$ is generated by virtual $k$-dimension one characters, for every orthogonal unit $u\in O(T(kG))$, there exist homomorphisms $\varphi_P\in \Hom(N_G(P)/P, K^\times)$ and signs $\epsilon_P \in \{\pm 1\}$ such that  \[\beta_G(u) = (\epsilon_P\cdot \varphi_P)_{P \in s_p(G)}.\] However, the coherence condition implies that if $x \in G$ is a $p$-element, $\varphi_P(x) = 1$, so each $\varphi_P$ descends to a degree one Brauer character, hence a homomorphism $\overline{\varphi_P} \in \Hom(N_G(P)/P, k^\times)$. Therefore, $\beta_G$ may be regarded as a group homomorphism \[\beta_G: O(T(kG)) \to \left(\prod_{P \in s_p(G)}\{\pm 1\} \times \Hom(N_G(P)/P, k^\times)\right)^G.\]

        One may explicitly compute for each $p$-subgroup $P$ of $G$ $\epsilon_P \cdot \varphi_P$ by taking a lift of $u(P)\in O(T(k[N_G(P)/P]))$ in $R_k(G)$ to obtain the signed degree one Brauer character.

        Finally, we have the identification \begin{align*}
            (B(S)^G)^\times \times \left(\prod_{P \in s_p(G)} \Hom(N_G(P)/P, k^\times)\right)^G &\cong \left(\prod_{P \in s_p(G)}\{\pm 1\} \times \Hom(N_G(P)/P, k^\times)\right)^G \\
            \big(x, (\varphi_P)_{P \in s_p(G)}\big) &\mapsto (m_x(P)\cdot \varphi_P)_{P\in s_p(G)}
        \end{align*}

        Altogether, we have the following isomorphism. We identify $(B(S)^G)^\times$ with its image in $CF(G,p)$, and let $\beta^\varphi_G$ denote the projection of $\beta_G$ onto its second component, the character tuple.
        \begin{align*}
            \kappa: O(T(kG)) &\cong (B(S)^G)^\times \times \left(\prod_{P \in s_p(G)} \Hom(N_G(P)/P, k^\times)\right)'\\
            u &\mapsto \big(( P \mapsto \dim_k u(P)), \beta_G^\varphi(u)\big)
        \end{align*}
    \end{remark}

    In fact, the tuple of coherent tuples has a further decomposition, which will be of use in the sequel.

    \begin{prop}{\cite[Proposition 4.6]{BC23}}\label{prop:furtherdecomp}
        One has a decomposition \[\left(\prod_{P \in s_p(G)} \Hom(N_G(P)/P,k^\times)\right)' \cong \Hom(G,k^\times) \times \left(\prod_{P \in s_p(G)} \Hom(N_G(P)/PC_G(P),k^\times)\right)',\] where the second factor denotes the set of all tuples \[(\varphi_P) \in \left(\prod_{P \in s_p(G)} \Hom(N_G(P)/PC_G(P),k^\times)\right)^G\] satisfying \[\varphi_P\big(xPC_G(P)\big) = \varphi_{P\langle x_p\rangle}\big(xP\langle x_p\rangle C_G(P\langle x_p\rangle)\big)\] for all $P \in s_p(G)$ and $x \in N_G(P)$.
    \end{prop}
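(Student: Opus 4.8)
The plan is to build the isomorphism as a composite of an explicit splitting and an identification of a kernel. Write $\Gamma := \left(\prod_{P \in s_p(G)} \Hom(N_G(P)/P,k^\times)\right)'$ for the left-hand group and $\Gamma' := \left(\prod_{P \in s_p(G)} \Hom(N_G(P)/PC_G(P),k^\times)\right)'$ for the claimed second factor. First I would produce the $\Hom(G,k^\times)$-factor directly: the projection $\rho\colon \Gamma \to \Hom(G,k^\times)$, $(\varphi_P) \mapsto \varphi_1$ (using $N_G(1)/1 = G$), is a homomorphism, and it is split by $\sigma\colon \omega \mapsto (\Res^G_{N_G(P)}\omega)_P$. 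This $\sigma$ is well defined because $k$ has characteristic $p$, so $k^\times$ contains no nontrivial $p$-element and each $\omega$ is trivial on the $p$-group $P$; hence $\Res^G_{N_G(P)}\omega$ factors through $N_G(P)/P$. One checks at once that $\sigma(\omega)$ is $G$-stable and coherent (both sides of the coherence relation equal $\omega(x)$) and that $\rho\sigma = \id$. Thus $\Gamma \cong \Hom(G,k^\times) \times \ker\rho$.

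Next I would identify $\ker\rho$ with $\Gamma'$. Inflation along the surjections $N_G(P)/P \twoheadrightarrow N_G(P)/PC_G(P)$ defines a map $\iota\colon \Gamma' \to \Gamma$; it is injective (inflation of characters along a surjection is injective), its defining coherence relation $\psi_P(xPC_G(P)) = \psi_{P\langle x_p\rangle}(xP\langle x_p\rangle C_G(P\langle x_p\rangle))$ is literally the coherence relation of $\Gamma$ after inflation, and its image lies in $\ker\rho$ because $N_G(1)/C_G(1) = G/G$ is trivial, forcing $\varphi_1 = 1$. So everything reduces to showing $\iota$ is onto $\ker\rho$, i.e.\ that every $G$-stable coherent tuple $(\varphi_P)$ with $\varphi_1 = 1$ has each $\varphi_P$ trivial on $PC_G(P)/P$, hence descends to $N_G(P)/PC_G(P)$.

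This descent claim is the heart of the argument, and I would prove it by evaluating $\varphi_P$ separately on the $p$- and $p'$-parts of elements of $C_G(P)$. For a $p$-element $g \in N_G(P)$, coherence with $x = g$ gives $\varphi_P(gP) = \varphi_{P\langle g\rangle}(gP\langle g\rangle) = 1$ since $g \in P\langle g\rangle$; so $\varphi_P$ already kills all $p$-elements, in particular the $p$-part of any $c \in C_G(P)$. For the $p'$-part I would induct on $|P|$, the base case $P = 1$ being the hypothesis $\varphi_1 = 1$. Given a $p'$-element $c \in C_G(P)$ with $P \neq 1$, choose a maximal subgroup $R \trianglelefteq P$ and $g \in P \setminus R$, so that $R\langle g\rangle = P$. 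Since $c$ centralizes $P$ it commutes with $g$, whence $x := cg \in N_G(R)$ has $p$-part $g$; coherence at $R$ then gives $\varphi_R(cgR) = \varphi_{P}(cgP) = \varphi_P(cP)$, using $cgP = cP$. As $\varphi_R$ is a homomorphism and $\varphi_R(gR) = 1$, this reads $\varphi_P(cP) = \varphi_R(cR)$, and the inductive hypothesis applied to $R$ (with $c \in C_G(P) \subseteq C_G(R)$) yields $\varphi_R(cR) = 1$. Hence $\varphi_P(cP) = 1$ for all $c \in C_G(P)$, completing the descent.

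Finally, the descended tuple $(\psi_P)$ lies in $\Gamma'$ and satisfies $\iota((\psi_P)) = (\varphi_P)$, so $\iota$ is an isomorphism onto $\ker\rho$ and the asserted decomposition follows. The main obstacle is precisely the descent claim: the coherence relation only propagates information \emph{upward} from $P$ to $P\langle x_p\rangle$, so a non-cyclic $P$ cannot be reached from the trivial subgroup in a single step, and the inductive reduction to a maximal subgroup via the auxiliary element $cg$ is what genuinely drives the proof. The remaining points—well-definedness of $\sigma$ and $\iota$, injectivity, and the matching of the two coherence relations—are routine.
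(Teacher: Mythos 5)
Your argument is correct. Note, however, that the paper does not prove this statement at all: it is imported verbatim as \cite[Proposition 4.6]{BC23}, so there is no internal proof to compare against, and what you have written is a self-contained replacement for that citation. Your two reductions are both sound: the splitting $\rho\colon(\varphi_P)\mapsto\varphi_1$ with section $\omega\mapsto(\Res^G_{N_G(P)}\omega)_P$ (well defined because $k^\times$ has no $p$-torsion, so $\omega$ kills $P$), and the identification of $\ker\rho$ with the reduced tuples via inflation. You also correctly isolate the one nontrivial point, namely that a coherent $G$-stable tuple with $\varphi_1=1$ must have each $\varphi_P$ trivial on $PC_G(P)/P$; your handling of it is right: the coherence relation applied to a $p$-element $g\in N_G(P)$ kills $\varphi_P(gP)$ outright, and for a $p'$-element $c\in C_G(P)$ the auxiliary element $x=cg$ with $g\in P\setminus R$, $R$ maximal in $P$, has $x_p=g$ and $R\langle g\rangle=P$, so coherence at $R$ yields $\varphi_P(cP)=\varphi_R(cgR)=\varphi_R(cR)$, and induction on $|P|$ finishes (using $C_G(P)\subseteq C_G(R)$ and the base case $\varphi_1=1$). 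The only cosmetic caveat is that you should record explicitly that a general $c\in C_G(P)$ is handled by splitting into $c_p$ and $c_{p'}$, both of which lie in $C_G(P)$ as powers of $c$; you gesture at this but do not state it. As a comparison of approaches: Boltje and Carman obtain their Proposition 4.6 inside a larger character-theoretic framework (the embedding $\beta_G$ into products of ordinary character rings), whereas your proof works entirely with the abstract coherence condition and elementary $p$-part/$p'$-part manipulations, which is arguably more transparent for a reader of this paper who does not want to unwind the machinery of \cite{BC23}.
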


    Note that if $P$ is abelian, then $PC_G(P) = C_G(P)$, hence the quotient $N_G(P)/PC_G(P)$ is equivalently the \textit{automizer} of $P$ in $G$, $\Aut_G(P) \cong N_G(P)/C_G(P)$, the image of the homomorphism $N_G(P) \to \Aut(P), g\mapsto c_g$, where $c_g \in \Aut(P)$ denotes the map induced by conjugation by $g$.

    \begin{notation}
        We set $\calR_{G,k} :=  \left(\prod_{P \in s_p(G)} \Hom(N_G(P)/PC_G(P),k^\times)\right)'$ and set $\calR_G := \calR_{G,\F_p}$. We refer to $\calR_{G,k}$ as the \textit{reduced coherent character tuple} for $G$ over $k$.
    \end{notation}

    \begin{remark}\label{rmk:decomp}
        The above decomposition arises since $\calR_{G,k}$ is since the kernel of the projection map \[\pi_1: \left(\prod_{P \in s_p(G)} \Hom(N_G(P)/P,k^\times)\right)' \to \Hom(G,k^\times).\]
    \end{remark}

    \subsection{Compatibility of the Euler characteristic and decomposition}

    Next, we describe how the Euler characteristic of an endotrivial complex and the decomposition of $O(T(kG))$ are compatible.

    \begin{notation}
        Given a superclass function $f \in CF(G)$, define the \textit{dimension function} of $f$, denoted $\dim(f)$, as follows: \[\dim(f)(H) := (-1)^{f(H)}.\] Then $\dim(f)$ is also a superclass function, and the assignment $f \mapsto \dim(f)$ induces a group homomorphism \[\dim: CF(G) \to CF(G)^\times.\]

        Since $\kappa$ is an isomorphism, from here we identify $O(T(kG))$ with its decomposition, and write $\Lambda$ for the composition $\kappa \circ \Lambda$ when it is clear. Additionally, it will be convenient to identify $B(G)^\times$ with its image in $CF(G)^\times$. We also have an identification $(B(S)^G)^\times$ with its image in $(CF(S)^G)^\times$, which identifies with $CF(G,p)^\times$.
    \end{notation}

    \begin{theorem}\label{thm:etrivtodecomp}
        The map \[\Lambda: \calE_k(G) \to  (B(S)^G)^\times \times \left(\prod_{P \in s_p(G)} \Hom(N_G(P)/P, k^\times)\right)'\] is described as follows: \[[C] \mapsto \big(\dim (h_C),\calH(C)\big).\]
    \end{theorem}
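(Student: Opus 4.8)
The plan is to compute the two components of the isomorphism $\kappa$ applied to $\Lambda(C)$ separately, exploiting throughout that the Brauer construction is additive and therefore commutes with the formation of Euler characteristics. Since the Brauer quotient at a $p$-subgroup $P$ induces a ring homomorphism $T(kG) \to T(k[N_G(P)/P])$, it sends $\Lambda(C) = \sum_i(-1)^i[C_i]$ to $\sum_i(-1)^i[C_i(P)] = \Lambda(C(P))$, and $C(P)$ is itself an endotrivial complex of $k[N_G(P)/P]$-modules. This reduces every local computation to the homology of $C(P)$, which is one-dimensional and concentrated in degree $h_C(P)$ with isomorphism type $k_\omega$ for $\omega = \calH_C(P)$.

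For the first component, recall that the first factor of $\kappa$ sends $u$ to the superclass function $P \mapsto \dim_k u(P)$. I would compute $\dim_k \Lambda(C(P)) = \sum_i (-1)^i \dim_k C_i(P)$ and invoke the invariance of the Euler characteristic under passage to homology to rewrite this as $\sum_i (-1)^i \dim_k H_i(C(P)) = (-1)^{h_C(P)}$. This is exactly $\dim(h_C)(P)$, so the first component of $\kappa(\Lambda(C))$ is $\dim(h_C)$.

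For the second component, I would unwind the definition $\beta_G(\Lambda(C))_P = K \otimes_\O \widehat{\Lambda(C(P))}$, the virtual ordinary character of $N_G(P)/P$ obtained by lifting $\Lambda(C(P))$ to characteristic zero. Its restriction to $p$-regular classes is the Brauer character of $\Lambda(C(P))$, and by additivity of Brauer characters together with the homology computation this equals $(-1)^{h_C(P)} \br_{k_\omega} = (-1)^{h_C(P)}\,\omega$. On the other hand, the structure of $O(T(kG))$ forces $\beta_G(\Lambda(C))_P = \epsilon_P \varphi_P$ for a sign $\epsilon_P$ and a degree-one character $\varphi_P$ equal to $1$ on $p$-elements, so $\varphi_P$ is determined on $p$-regular classes by its reduction $\overline{\varphi_P} \in \Hom(N_G(P)/P, k^\times)$. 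Comparing the two expressions on $p$-regular classes and evaluating at the identity pins down $\epsilon_P = (-1)^{h_C(P)}$ and $\overline{\varphi_P} = \omega = \calH_C(P)$; hence $\beta_G^\varphi(\Lambda(C)) = (\calH_C(P))_P = \calH(C)$.

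The main obstacle I anticipate is bookkeeping the compatibility of the two components under the identification of $(B(S)^G)^\times$ with a group of marks: the sign $\epsilon_P$ extracted in the second computation must agree with the dimension $\dim_k \Lambda(C)(P)$ recorded by the first, and both indeed equal $(-1)^{h_C(P)}$. Verifying that this common value is precisely what makes the pair $(\dim(h_C), \calH(C))$ a coherent, $G$-stable element of the target—so that no constraint imposed by the primed product is violated—is the one place where I would track the definitions of $\kappa$ and $\beta_G$ carefully rather than argue purely formally, though coherence and $G$-stability also follow a posteriori from $\Lambda(C)$ being an orthogonal unit and $\kappa$ being an isomorphism onto the stated target.
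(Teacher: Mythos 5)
Your argument is correct, but it is not the route the paper takes: the paper's proof of this theorem is a one-line citation, deferring both components to \cite[Propositions 4.5 and 4.6]{M24a}, whereas you reconstruct the verification from scratch. Your reconstruction is sound and is essentially what one expects those cited propositions to contain. The three ingredients you use are all standard and correctly deployed: the Brauer construction at $P$ induces a ring homomorphism $T(kG) \to T(k[N_G(P)/P])$ carrying $\Lambda(C)$ to $\Lambda(C(P))$; the Euler characteristic of a bounded complex of finite-dimensional spaces equals the alternating sum of the dimensions of its homology, which for the endotrivial complex $C(P)$ collapses to $(-1)^{h_C(P)}$, giving the first component $\dim(h_C)$; and Brauer characters are additive on exact sequences, so the Brauer character of $\Lambda(C(P))$ equals $(-1)^{h_C(P)}\br_{k_\omega}$ with $\omega = \calH_C(P)$, which after comparison with the form $\epsilon_P\varphi_P$ on $p$-regular classes (noting that a degree-one character trivial on $p$-elements is determined by its values on $p'$-elements) yields $\epsilon_P = (-1)^{h_C(P)}$ and $\overline{\varphi_P} = \calH_C(P)$. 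Your closing observation that coherence and $G$-stability of the resulting pair need not be checked by hand, since $\Lambda(C)$ is already known to be an orthogonal unit and $\kappa$ an isomorphism onto the stated target, is exactly the right way to dispose of that bookkeeping. What your approach buys is a self-contained proof within this paper; what the citation buys is brevity. Either is acceptable here.
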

    \begin{proof}
        This is a reformulation of \cite[Proposition 4.5]{M24a} and \cite[Proposition 4.6]{M24a}.
    \end{proof}

    Since $\calE_k(G)$ has a full-rank subgroup $\calT\calE(G)$ which lives over $\F_p$, combining Theorem \ref{thm:etrivtodecomp} and Proposition \ref{prop:furtherdecomp} obtains the following.

    \begin{theorem}
        The image of \[ \Lambda: \calE_k(G) \to  (B(S)^G)^\times \times \left(\prod_{P \in s_p(G)} \Hom(N_G(P)/P, k^\times)\right)'\] is a subgroup of \[(B(S)^G)^\times\times \Hom(G, k^\times) \times \calR_G.\] This further restricts to a homomorphism \[\Lambda: \calT\calE(G) \to (B(S)^G)^\times \times \calR_G.\]
    \end{theorem}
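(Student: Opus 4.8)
The plan is to use the splitting $\calE_k(G)\cong\Hom(G,k^\times)\times\calT\calE(G)$ together with the refinement of Proposition~\ref{prop:furtherdecomp}, computing $\Lambda$ separately on each factor. By Theorem~\ref{thm:etrivtodecomp}, for any $C\in\calE_k(G)$ we have $\Lambda(C)=\big(\dim(h_C),\calH(C)\big)$, whose first coordinate already lies in $(B(S)^G)^\times$. So everything reduces to locating the coherent-tuple coordinate $\calH(C)$ under the isomorphism $\big(\prod_{P\in s_p(G)}\Hom(N_G(P)/P,k^\times)\big)'\cong\Hom(G,k^\times)\times\calR_{G,k}$, in which (by Remark~\ref{rmk:decomp}) the projection onto $\Hom(G,k^\times)$ is $\pi_1$, computed by $\calH_C(1)$, and the kernel is $\calR_{G,k}$. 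The content of the theorem is then the assertion that the $\calR_{G,k}$-coordinate of $\Lambda(C)$ always lands in the subgroup $\calR_G=\calR_{G,\F_p}$ cut out by $\F_p^\times\hookrightarrow k^\times$.

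First I would dispose of the torsion factor. For $\omega\in\Hom(G,k^\times)$ the complex $k_\omega[0]$ has Brauer constructions $k_\omega[0](P)$ with homology $k_{\bar\omega_P}$ concentrated in degree $0$, where $\bar\omega_P\in\Hom(N_G(P)/P,k^\times)$ is the inflation to $N_G(P)/P$ of $\omega|_{N_G(P)}$ (recall $\omega$ is trivial on $p$-groups). Hence $h_{k_\omega[0]}=0$ and $\calH(k_\omega[0])=\{\bar\omega_P\}_P$ is exactly the image of $\omega$ under the restriction--inflation section of $\pi_1$. Consequently $\Lambda(k_\omega[0])$ is trivial in the $(B(S)^G)^\times$- and $\calR_{G,k}$-coordinates and contributes precisely $\omega$ to the $\Hom(G,k^\times)$-coordinate.

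Next I would treat $\calT\calE(G)$. For $C_0\in\calT\calE(G)$ one has $\calH_{C_0}(1)=1$ by definition, so the $\Hom(G,k^\times)$-coordinate of $\Lambda(C_0)$ vanishes. The crux is the $\calR_{G,k}$-coordinate. Since every indecomposable representative of $\calT\calE(G)$ descends to $\F_p$, write $C_0\cong k\otimes_{\F_p}\tilde C_0$ for a complex $\tilde C_0$ of $\F_p G$-modules. Both the Brauer construction and homology commute with the flat base change $k\otimes_{\F_p}-$, so for each $p$-subgroup $P$ the one-dimensional homology $H_{h_{C_0}(P)}(C_0(P))\cong k\otimes_{\F_p}H_{h_{C_0}(P)}(\tilde C_0(P))$ is defined over $\F_p$. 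Therefore $\calH_{C_0}(P)$ takes values in $\F_p^\times$, i.e. $\calH(C_0)\in\big(\prod_P\Hom(N_G(P)/P,\F_p^\times)\big)'$; passing to the reduced tuple (which only replaces $N_G(P)/P$ by $N_G(P)/PC_G(P)$ and is functorial in the coefficients) places the $\calR_{G,k}$-coordinate of $\Lambda(C_0)$ inside $\calR_G$.

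Combining the two computations through $\Lambda(C)=\Lambda(k_\omega[0])\cdot\Lambda(C_0)$, where $C=k_\omega[0]\otimes_k C_0$ under the splitting, yields $\im(\Lambda)\subseteq(B(S)^G)^\times\times\Hom(G,k^\times)\times\calR_G$, the $\Hom(G,k^\times)$-coordinate ranging over all of $\Hom(G,k^\times)$ while the $\calR$-coordinate is constrained to $\calR_G$. Restricting to $\calT\calE(G)$ kills the middle coordinate and gives $\Lambda\colon\calT\calE(G)\to(B(S)^G)^\times\times\calR_G$. I expect the main obstacle to be the descent step: one must verify carefully that the homology of the Brauer constructions of a complex defined over $\F_p$ stays defined over $\F_p$, so that the associated degree-one characters genuinely take values in $\F_p^\times$ and not merely in $k^\times$. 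The remaining manipulations are bookkeeping built on Theorem~\ref{thm:etrivtodecomp} and Proposition~\ref{prop:furtherdecomp}.
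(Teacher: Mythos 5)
Your proposal is correct and follows essentially the same route as the paper: decompose $\calE_k(G)\cong\Hom(G,k^\times)\times\calT\calE(G)$, observe that the torsion factor maps onto the $\Hom(G,k^\times)$-coordinate while the $\calT\calE(G)$-part lands in $\ker(\pi_1)=\calR_{G,k}$, and then use descent of indecomposable representatives to $\F_p$ to cut the character tuples down to $\calR_G$. You merely spell out the base-change/Brauer-construction compatibility that the paper leaves implicit in the phrase ``since all indecomposable representatives of elements of $\calT\calE(G)$ descend to $\F_p$, the result follows.''
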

    \begin{proof}
        It suffices to compute the image of $\calT\calE(G)$, since we have a decomposition $\calE_k(G) =\calT\calE(G) \times \Hom(G,k^\times)$, and the latter component maps via the identity onto the $\Hom(G,k^\times)$ component in the target of $\Lambda$. The projection of $\Lambda(\calT\calE(G))$ onto the component of character tuples is a subgroup of $\ker(\pi_1) = \calR_{G,k}$ from Remark \ref{rmk:decomp}. Since the decomposition \[\left(\prod_{P \in s_p(G)} \Hom(N_G(P)/P,k^\times)\right)' \cong \Hom(G,k^\times) \times \left(\prod_{P \in s_p(G)} \Hom(N_G(P)/PC_G(P),k^\times)\right)'\] also arises from $\pi_1$, it follows that the projection of $\im( \Lambda)$ onto the component of character tuples is a subgroup of $\ker(\pi_1) = \calR_{G,k}.$ Now, since all indecomposable representatives of elements of $\calT\calE(G)$ descend to $\F_p$, the result follows.
    \end{proof}

    \section{Surjectivity of the Lefschetz homomorphism: the case of $p = 2$} \label{sec:p2}

    Our goal for this section is to compute the image of $\calE_{\F_2}(G) \to O(T(\F_2 G))$. In this case, $\F_2^\times = \{1\}$, so all groups of character tuples are trivial (which certainly is not true for arbitrary field extensions $k/\F_2$), and it only remains to compute the image of
    \begin{align*}
        \calE_{\F_2}(G) &\to (B(S)^G)^\times\\
        [C] &\mapsto \dim(h_C).
    \end{align*}

    If $G$ is a $p$-group, this map is surjective, which follows from an unpublished topological result of Tornehave \cite{Tor84} regarding the \textit{tom Dieck homomorphism}, which later was proved algebraically by Yal\c{c}in \cite[Corollary 1.2]{Y05}.

    \begin{theorem}{\cite[Proposition 7.4]{M24c}}
        Let $G$ be a $p$-group. We have \[im(\dim: CF_b(G) \to CF(G)^\times ) = \im(m: B(G)^\times \to CF(G)^\times).\] Therefore, we may regard the dimension function as a surjective group homomorphism $\dim: CF_b(G) \to B(G)^\times$, and in particular, $\Lambda: \calE_k(G) \to O(T(kG))$ is surjective.
    \end{theorem}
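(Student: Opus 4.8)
The plan is to recast both sides as images of a single sign map on real representations and then feed in the surjectivity of the tom Dieck homomorphism, after clearing the odd case by hand. Write $\dim_\R\colon R_\R(G)\to CF(G)$ for the (real) dimension homomorphism, so that $CF_b(G)=\im(\dim_\R)$ by the characterization recalled above, and let $\theta\colon R_\R(G)\to CF(G)^\times$ be the homomorphism $V\mapsto\big(H\mapsto(-1)^{\dim_\R V^H}\big)$. Since $(-1)^{(\dim_\R V)(H)}=(-1)^{\dim_\R V^H}$, the map $\theta$ factors as $\dim\circ\dim_\R$, whence $\im(\theta)=\im\big(\dim\colon CF_b(G)\to CF(G)^\times\big)$. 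It therefore suffices to prove $\im(\theta)=\im\big(m\colon B(G)^\times\to CF(G)^\times\big)$.

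First I would dispose of the case $p$ odd. Borel--Smith condition (a) forces every $f\in CF_b(G)$ to be constant modulo $2$: for each $p$-subgroup $H$ pick a subnormal chain $1=H_0\trianglelefteq\cdots\trianglelefteq H_m=H$ with all factors of order $p$, and apply (a) to each subquotient $H_{i+1}/H_i$ to get $f(H)\equiv f(1)\pmod 2$. Hence $\im\big(\dim\colon CF_b(G)\to CF(G)^\times\big)=\{\pm\mathbf 1\}$, the two constant sign functions. Since $|G|$ is odd, tom Dieck's theorem (equivalent to the odd order theorem) gives $B(G)^\times=\{\pm[G/G]\}$, whose marks are again $\pm\mathbf 1$, so the two images agree.

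The substance is $p=2$, and here I would invoke the tom Dieck homomorphism $\Psi\colon R_\R(G)\to B(G)^\times$, whose defining feature is that its marks record fixed-point dimensions mod $2$; whatever the normalization, $m(\Psi(V))$ and $\theta(V)$ differ only by the global constant $(-1)^{\dim_\R V}\mathbf 1=m(\pm[G/G])$, so $\theta(V)=m\big(\pm[G/G]\cdot\Psi(V)\big)\in\im(m)$. This yields $\im(\theta)\subseteq\im(m)$, an inclusion valid for every finite group. The reverse inclusion is the deep ingredient: Tornehave's (unpublished) theorem, reproved algebraically by Yal\c{c}in, asserts that $\Psi$ is surjective when $G$ is a $p$-group. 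Granting this, any $u=m(b)\in\im(m)$ has $b=\Psi(V)$ for some $V$, so $u=m(\Psi(V))=\pm\theta(V)$; as $\im(\theta)$ is a subgroup containing $-\mathbf 1=\theta(\R)$, we get $u\in\im(\theta)$, giving $\im(m)\subseteq\im(\theta)$ and hence equality. I expect this surjectivity to be the main obstacle — the two inclusions above are routine, but that every Burnside unit of a $2$-group is realized by the fixed-point dimensions of a real representation is genuinely hard, and I would cite it rather than reprove it.

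Finally I would harvest the consequences. As $m$ is injective, the equality of images lets us view $\dim$ as a surjection $CF_b(G)\twoheadrightarrow B(G)^\times$. For surjectivity of $\Lambda$, observe that for a $p$-group every factor $\Hom(N_G(P)/P,k^\times)$ vanishes, since $N_G(P)/P$ is a $p$-group while $k^\times$ has no $p$-torsion in characteristic $p$; thus $O(T(kG))\cong (B(G)^G)^\times=B(G)^\times$. Under this identification Theorem \ref{thm:etrivtodecomp} reads $\Lambda\colon[C]\mapsto\dim(h_C)$, and since $\im(h)=CF_b(G)$ the image of $\Lambda$ is precisely $\im\big(\dim\colon CF_b(G)\to CF(G)^\times\big)=B(G)^\times$. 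Hence $\Lambda$ is surjective.
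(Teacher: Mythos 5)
Your argument is correct and follows exactly the route the paper indicates: the paper gives no independent proof here (it cites \cite[Proposition 7.4]{M24c}), and the surrounding discussion identifies the Tornehave/Yal\c{c}in surjectivity of the tom Dieck homomorphism as the essential ingredient, which is precisely what you invoke, with the remaining steps (the factorization $\theta=\dim\circ\dim_\R$, the odd case via $B(G)^\times=\{\pm[G/G]\}$, injectivity of $m$, and the vanishing of the character-tuple factors for $p$-groups) being the routine bookkeeping one would expect. No gaps.
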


        Therefore, given any unit $u \in (B(S)^G)^\times$, there exists some Borel-Smith function $f \in CF_b(S)$ such that $\dim(f) = u$. Note that the only interesting case is $p=2$, since if $p$ is odd, $|B(G)^\times| = 2$. However, $f$ is not guaranteed to be $G$-stable, i.e. there is no guarantee that $f \in CF_b(S)^G = CF_b(G,p) $, i.e. that $f$ extends to a Borel-Smith function on $G$. Effectively, we have the following rewording: \[\text{$\Lambda: \calE_{\F_2}(G) \to O(T(\F_2G))$ is surjective if and only if $\dim: CF_b(G,p) \to (B(S)^G)^\times$ is surjective}.\]

    \begin{remark}
        Tornehave's theorem \cite{Tor84} explicitly proves that the tom Dieck homomorphism $\eta: R_\R(G) \to B(G)^\times$ is surjective for all 2-groups $G$. However, $\eta$ is known to not be surjective for non-2-groups, for instance $G = A_5$ \cite[Remark 5.3]{Bar11}. Additionally, it does not seem easy to adopt Tornehave's original proof, as it involves a reduction step that may not be compatible with $G$-stability for a Sylow $p$-subgroup of $G$, and Yal\c{c}in's algebraic proof uses a restriction to so-called \textit{genetic subgroups} of $G$, which may not be compatible with $G$-stability either. At present, the question of surjectivity for $p = 2$ seems a difficult task, and a pertinent, interesting question.
    \end{remark}

    \subsection{$2$-fusion controlled by normalizers}

    We first recall a few necessary preliminaries of fusion systems and control of fusion, following \cite{AKO11}.

    \begin{definition}
        Let $G$ be a finite group. Recall that for any two subgroups $H, K$ of $G$, $\Hom_G(H,K)$ denotes the set of all (necessarily injective) homomorphisms from $H$ to $K$ induced by conjugation in $G$. We denote the set of all injective homomorphisms from $H$ to $K$ by $\operatorname{Inj}(P,Q)$.

        A \textit{fusion system} over a $p$-group $S$ is a category $\calF$ with objects the set of all subgroups of $S$ and which satisfy the following two properties for all subgroups $P, Q$ of $S$.
        \begin{itemize}
            \item $\Hom_S(P,Q) \subseteq \Hom_\calF(P,Q) \subseteq \operatorname{Inj}(P,Q)$;
            \item Each $\phi \in \Hom_\calF(P,Q)$ is the composition of an $\calF$-isomorphism followed by an inclusion.
        \end{itemize}
        Let $S$ be a Sylow $p$-subgroup of a finite group $G$. We denote by $\calF_S(G)$ the fusion system over $S$ with hom-sets $\Hom_{\calF_S(G)}(P,Q) := \Hom_G(P,Q)$ for all subgroups $P, Q$ of $S$. This fusion system is called \textit{the fusion category of $G$ over $S$}. A fusion system $\calF$ over $S$ is \textit{realizable} if $\calF = \calF_S(G)$ for some finite group $G$ with Sylow $p$-subgroup $S$.

        Let $H$ be a subgroup of $G$ containing a Sylow $p$-subgroup $S$. We say \textit{$H$ controls fusion in $S$} if $\calF_S(G) = \calF_S(H)$. Explicitly, this implies that if there exists $g\in G$ inducing an isomorphism $c_g: P \to Q$ for two subgroups $P, Q$, then there exists $h \in H$ such that $c_g = c_h$.
    \end{definition}

    Fusion system-theoretic considerations will be used minimally in this paper. For further reading, we refer the reader to \cite{AKO11} for a comprehensive overview.

    \begin{remark}
        If $S \trianglelefteq G$, then that the set $CF_b(S)$ has $G$-set structure induced by $G$-conjugation. Then $CF_b(S)^G$ is exactly the set of elements fixed by $G$. Given any Borel-Smith function $f \in CF_b(S)$, \[\tr^G_S f := \sum_{g \in [G/S]} {}^gf \in CF_b(S)^G\] is a $G$-stable Borel-Smith function on $S$, and therefore extends to a Borel-Smith function on $G$.

        More generally, if $N_G(S)$ controls fusion of $S$, then $\tr^{N_G(S)}_S f$ is a $N_G(S)$-stable Borel-Smith function, hence $G$-stable, and therefore also extends to a Borel-Smith function on $G$. Note we may assume a weaker condition - we only require that any two $G$-conjugate subgroups of $S$ are $N_G(S)$-conjugate.
    \end{remark}

    \begin{theorem}\label{thm:fusioncontrolledbynormalizer}
        Let $G$ be a finite group with Sylow $2$-subgroup $S$ for which $N_G(S)$ controls fusion in $S$, i.e. $\calF_S(G) \cong \calF_S(N_G(S))$. Then the group homomorphism $\Lambda: \calE_{\F_2}(G) \to O(T(\F_2 G))$ is surjective.

        In fact, this holds for any group $G'$ with a Sylow $2$-subgroup isomorphic to $S$ satisfying $\calF_S(G) \cong \calF_S(G')$.
    \end{theorem}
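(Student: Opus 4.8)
The plan is to use the reduction already recorded in the discussion preceding the statement: since $k=\F_2$ has trivial multiplicative group, every character-tuple component of $O(T(\F_2 G))$ vanishes, so by Theorem~\ref{thm:etrivtodecomp} the only surviving part of $\Lambda$ is the Burnside component $[C]\mapsto \dim(h_C)$, whose image is $\dim\big(CF_b(G,2)\big)$ because $\im(h)=CF_b(G,2)$. Thus, as noted in the displayed reformulation, it suffices to prove that $\dim\colon CF_b(G,2)\to (B(S)^G)^\times$ is surjective. The first step is to translate the hypothesis: since $N_G(S)$ controls fusion in $S$, two subgroups of $S$ are $G$-conjugate if and only if they are $N_G(S)$-conjugate, whence $CF_b(G,2)=CF_b(S)^G=CF_b(S)^{N_G(S)}$ and likewise $(B(S)^G)^\times=(B(S)^{N_G(S)})^\times$.

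Next I would run a transfer argument. Fix $u\in (B(S)^G)^\times$ and view it inside $CF(S)^\times$ via its marks, all of which lie in $\{\pm1\}$. Because $S$ is a $2$-group, the Tornehave--Yal\c{c}in surjectivity of $\dim\colon CF_b(S)\to B(S)^\times$ provides some $f_0\in CF_b(S)$ with $\dim(f_0)=u$; this $f_0$ need not be $G$-stable. I then set $f:=\tr^{N_G(S)}_S f_0=\sum_{w\in N_G(S)/S}{}^w f_0$. Since elements of $S$ act trivially on superclass functions and $CF_b(S)$ is preserved by the $N_G(S)$-action, $f$ is a well-defined $N_G(S)$-invariant Borel--Smith function, i.e. $f\in CF_b(S)^{N_G(S)}=CF_b(G,2)$. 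As $\dim$ is a group homomorphism intertwining the conjugation action and $u$ is $N_G(S)$-invariant, one computes $\dim(f)=\prod_{w}{}^w\dim(f_0)=\prod_w {}^w u=u^{[N_G(S):S]}$.

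The crux is the parity observation that closes the argument. Since $S$ is a Sylow $2$-subgroup of $N_G(S)$, the index $[N_G(S):S]$ is odd, while every element of $B(S)^\times$ has order dividing $2$ (its marks are $\pm1$, so its square has all marks equal to $1$ and hence equals $[S/S]$ by injectivity of the mark homomorphism). Therefore $u^{[N_G(S):S]}=u$, giving $\dim(f)=u$ with $f\in CF_b(G,2)$, and surjectivity follows. I expect this parity coincidence to be the conceptual heart of the proof: the trace only recovers $u^{|N_G(S)/S|}$ in general, but this power collapses back to $u$ precisely because the index is odd and $B(S)^\times$ is $2$-torsion. The control-of-fusion hypothesis enters solely to ensure that the $N_G(S)$-trace is genuinely $G$-stable, so that $f$ lands in $CF_b(G,2)$ rather than merely in $CF_b(S)$.

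For the final assertion, I would observe that the source $CF_b(S)^G$, the target $(B(S)^G)^\times$, and the $G$-conjugacy relation used throughout all depend only on the fusion system $\calF_S(G)$. Consequently, for any $G'$ with Sylow $2$-subgroup isomorphic to $S$ and $\calF_S(G')\cong\calF_S(G)$, the map $\dim\colon CF_b(G',2)\to (B(S)^{G'})^\times$ coincides verbatim with the one just shown to be surjective, so $\Lambda\colon \calE_{\F_2}(G')\to O(T(\F_2 G'))$ is surjective as well, with no separate normalizer-control hypothesis on $G'$ required.
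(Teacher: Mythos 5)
Your proposal is correct and follows essentially the same route as the paper: reduce to surjectivity of $\dim\colon CF_b(G,2)\to (B(S)^G)^\times$, lift $u$ to some $f_0\in CF_b(S)$ via Tornehave--Yal\c{c}in, take the $N_G(S)$-trace (which is $G$-stable precisely by the control-of-fusion hypothesis), and use that the index $[N_G(S):S]$ is odd so the trace does not change the image under $\dim$. Your multiplicative formulation $\dim(f)=u^{[N_G(S):S]}=u$ is just the paper's additive congruence $\tr^{N_G(S)}_S f_0\equiv [N_G(S):S]\,f_0\equiv f_0 \bmod 2$ in different notation, and the final fusion-system observation for $G'$ matches the paper's as well.
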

    \begin{proof}
        Let $u \in (B(S)^G)^\times$. There exists some Borel-Smith function $f \in CF_b(S)$ such that $\dim(f) = u$. In particular, for all $G$-conjugate subgroups $H,K$, we have $f(H) \equiv f(K) \mod 2$. Therefore, \[\left(\tr^{N_G(S)}_S f\right)(P) \equiv [N_G(S):S] f(P) \equiv f(P) \mod 2\] for all subgroups $P$ of $S$. Since $N_G(S)$ controls fusion in $S$, $\tr^{N_G(S)}_S f$ is an $N_G(S)$-stable, hence $G$-stable, Borel-Smith function, and therefore extends to a Borel-Smith function on $G$. Since $\tr^{N_G(S)}_S f \equiv f \mod 2$, we have $\dim\left(\tr^{N_G(S)}_S f\right) = u$. Now, there exists a unique endotrivial complex $C$ of $\F_2 G$-modules with h-marks $h_C = f$, and $\dim(h_C) = u \in (B(S)^G)^\times$, as desired. The last statement follows since $\tr^G_S f$ as before can be regarded as a $G'$-stable Borel-Smith function, hence a Borel-Smith function on $G'$ as well.
    \end{proof}

    \begin{remark}
        In particular, $\Lambda$ is injective for any finite group $G$ with a \textit{resistant} Sylow 2-subgroup. A \textit{resistant} $p$-group $P$ satisfies that for any finite group $G$ with Sylow $p$-subgroup $P$, $N_G(P)$ controls fusion in $P$. Burnside's theorem implies all abelian $p$-groups are resistant. Stancu proves in \cite{St02} that almost all generalized extraspecial $p$-groups are resistant, with the lone exceptions arising from $p$-groups of the form $E \times A$, where $E$ is a Sylow $p$-subgroup of $\GL_3(p)$ (which has self-normalizing Sylow $p$-subgroups), and $A$ is elementary abelian.
    \end{remark}

    \subsection{Dihedral Sylow $2$-subgroups}

    The situation in characteristic $2$, in the case when a normalizer of a Sylow $2$-subgroup does not control fusion, remains unclear, and it appears to be a difficult question in general.

    We demonstrate that the Euler characteristic $\Lambda$ is surjective for any group with a dihedral Sylow $2$-subgroup $D_{2^n}$ satisfying $n \geq 3$. Note that the cases $n = 1,2$ are covered by Theorem \ref{thm:fusioncontrolledbynormalizer}. This includes the finite simple groups $A_7$ and $\operatorname{PSL}_2(q)$ with $q \geq 5$ an odd prime power, see \cite{GW65}. Note this list includes $A_5 \cong \operatorname{PSL}_2(5)$ and $A_6 \cong \operatorname{PSL}_2(9)$.

    \begin{remark}\label{rmk:described}
        To verify this statement, it suffices to verify it for all realizable fusion systems $\calF$ on $D_{2^n}$. The classification of these is known; we review this now, following \cite{Li07}. We describe $S := D_{2^n}$ as follows:
        \[S := \langle x, t \mid x^{2^{n-1}} = t^2 = 1, {}^tx = x\inv\rangle.\] Set $z = x^{2^{n-2}}$, then $\langle z \rangle = Z(S)$. Besides the trivial fusion system $\calF_S(S)$, there are two other fusion systems, both of which are realized by finite groups. We denote by $\calF_S^1$ the fusion system on $S$ generated by $\calF_S(S)$ and an automorphism of order 3 of the Klein-four group $\langle z\rangle \times \langle t\rangle$. In this case $z$ and $t$ are $\calF^I_S$-conjugate while $z$ and $xt$ are not. We denote by $\calF_S^2$ the fusion system on $S$ generated by $\calF_S(S)$ and an automorphism of order 3 on each of the Klein-four groups $\langle z \rangle \times \langle t\rangle$ and $\langle z \rangle \times \langle xt\rangle$. Therefore, all elements of $S$ of order 2 are $\calF_S^{II}$ conjugate. Both $\calF_S^I$ and $\calF_S^{II}$ do not fuse any subgroups of order greater than 2 that are not already fused by $\calF_S(S)$.

        The only simple fusion system on $S$ is $\calF_S^{II}$. All three fusion systems arise from finite groups. For instance, if $q$ is an odd prime power satisfying $q \equiv \pm 1 \mod 8$, then $\operatorname{PSL}_2(q)$ has a dihedral Sylow 2-subgroup $S$ with $\calF_S(\operatorname{PSL}_2(q)) \cong \calF_S^{II}$. If $q$ is an odd prime power satisfying $q \equiv \pm 3 \mod 8$, then $\operatorname{PGL}_2(q)$ has a dihedral Sylow 2-subgroup $S$ with $\calF_S(\operatorname{PGL}_2(q)) \cong \calF_S^I$.

        \begin{figure}[H]
            \centering
            \begin{tikzcd}
            	&& {D_8} \\
            	& {V_4} & {C_4} & {V_4} \\
            	{C_2} & {C_2} & {Z(D_8)} & {C_2} & {C_2} \\
            	&& 1
            	\arrow[no head, from=1-3, to=2-2]
            	\arrow[no head, from=1-3, to=2-3]
            	\arrow[no head, from=1-3, to=2-4]
            	\arrow[no head, from=2-2, to=3-1]
            	\arrow[no head, from=2-2, to=3-2]
            	\arrow[no head, from=2-2, to=3-3]
            	\arrow[no head, from=2-3, to=3-3]
            	\arrow[no head, from=2-4, to=3-3]
            	\arrow[no head, from=2-4, to=3-4]
            	\arrow[no head, from=2-4, to=3-5]
            	\arrow[dashed, no head, from=3-1, to=3-2]
            	\arrow[dotted, no head, from=3-2, to=3-3, "I"]
            	\arrow[no head, from=3-2, to=4-3]
            	\arrow[dotted, no head, from=3-3, to=3-4, "II"]
            	\arrow[dashed, no head, from=3-4, to=3-5]
            	\arrow[no head, from=4-3, to=3-1]
            	\arrow[no head, from=4-3, to=3-4]
            	\arrow[no head, from=4-3, to=3-5]
                \arrow[no head, from=4-3, to=3-3]
            \end{tikzcd}
            \caption{The three fusion systems associated to $D_8$. The fusion system $\calF_{D_8}^I$ fuses the subgroups connected by the dotted $I$, and the fusion system $\calF_{D_8}^{II}$ fuses the subgroups connected by both the dotted $I$ and $II$. The picture is analogous for all $D_{2^n}$ with $n \geq 3$, as no subgroups of greater order are fused.}
        \end{figure}
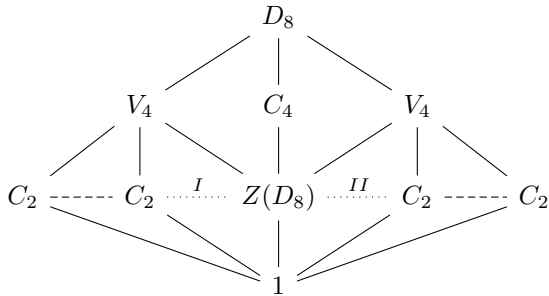

        We next recall $\calE_k(S) \cong CF_b(S)$ and $B(S)^\times$. Since $CF_b(-)$ is a biset functor, we have a decomposition \[CF_b(S) = \bigoplus_{N \trianglelefteq S} \partial \Inf^S_{S/N} CF_b(S/N),\] see \cite[Lemma 6.3.2]{Bou10}. The normal subgroups of $S = D_{2^n}$ are the two noncyclic subgroups of index two, isomorphic to $D_{2^{n-1}}$, which we denote by $H_1$ and $H_2$, and the subgroups $C_{2^i}$ of the maximal cyclic subgroup of $S$ (of index $2$), for $0\leq i\leq 2^{n-1}$. We have that $D_{2^n}/C_{2^i} \cong D_{2^{n-i}}$ for $0 \leq i \leq 2^{n-3}$. For $n \geq 3$, we have that $\partial\calE_k(D_{2^n}) = \langle C^{n}\rangle$, where $ C^n$ is an endotrivial complex satisfying $h_{C^n}(1) = 2$, $h_{C^n}(H) = 1$ where $H$ is a noncentral subgroup of $D_{2^n}$ of order 2, and $h_{C^n}(K) = 0$ for all other subgroups $K$, see \cite[Theorem 6.9]{M24a}. Note that $\partial \calE_k(V_4) = \emptyset$.

        It follows that $CF_b(S)$ has a $\Z$-basis as follows, where we denote by $f_N$ the inflated faithful Borel-Smith function on $S/N$. We denote by $K_i$ for $i \in \{1, 2\}$ the unique up to conjugacy noncentral cyclic subgroup of order 2 satisfying $K_i \leq H_i$. Note $f_{C^{2^{n-2}}}$ does not exist. The tables for $D_8$ and $D_{2^n}$ for $n \geq 4$ are displayed separately, as they differ slightly. For the latter table, we only display the h-marks at the subgroups of order at most 2, as these are the only subgroups whose fusion creates complications.

        \begin{table}[H]
            \centering
            \begin{tabular}{|c|c|c|c|c|c|c|c|c|}
            \hline
               & $1$ & $K_1$ & $K_2$ & $C_2$ & $H_1$ & $H_2$ & $C_4$ & $D_8$ \\
                \hline
              $f_{D_8}$ & 1 & 1 & 1 & 1 & 1 & 1 & 1 & 1 \\
              $f_{H_1}$ & 1 & 1 & 0 & 1 & 1 & 0 & 0 & 0\\
              $f_{H_2}$ & 1 & 0 & 1 & 1 & 0 & 1 & 0 & 0 \\
              $f_{C_4}$ & 1 & 0 & 0 & 1 & 0 & 0 & 1 & 0 \\
              $f_1$ & 2 & 1 & 1 & 0 & 0 & 0 & 0 & 0\\
                \hline
            \end{tabular}
            \caption{The $\Z$-basis of $CF_b(D_8)$ arising from the real representation ring $R_\R(D_8)$, which coincides with the $\Z$-basis arising from the canonical decomposition into faithful constituents.}
        \end{table}

        \begin{table}[H]
            \centering
            \begin{tabular}{|c|c|c|c|c|c|c|c|c|}
            \hline
               & $1$ & $K_1$ & $K_2$ & $C_2$ \\
                \hline
              $f_{D_{2^n}}$ & 1 & 1 & 1 & 1  \\
              $f_{H_1}$ & 1 & 1 & 0 & 1  \\
              $f_{H_2}$ & 1 & 0 & 1 & 1  \\
              $f_{C_{2^{n-1}}}$ & 1 & 0 & 0 & 1 \\
              $f_{C_{2^{n-3}}}$ & 2 & 1 & 1 & 2 \\
              $\vdots$&$\vdots$&$\vdots$&$\vdots$&$\vdots$ \\
              $f_{C_2}$ & 2 & 1 & 1 & 2  \\
              $f_1$ & 2 & 1 & 1 & 0   \\
                \hline
            \end{tabular}
            \caption{The $\Z$-basis of $CF_b(D_{2^n})$ for $n \geq 4$ arising from the canonical decomposition into faithful constituents. Only values for subgroups of order at most 2 are displayed. Note $u_{C_{2^{n-3}}}, \dots, u_{C_2}, u_1$ are distinct, their values differ at subgroups of higher order.}
        \end{table}

        Tornehave/Yal\c{c}in's theorem implies that the exponential map $\dim: CF_b(S) \to B(S)^\times$ is surjective for any $p$-group $S$. This allows us to easily deduce the unit group of the Burnside ring (via its marks) for all dihedral $2$-groups $D_{2^n}$ for $n \geq 3$. We again display these separately for $D_8$ and $D_{2^n}$ for $n \geq 4$. We remark that $\ker(\dim)$ is a finite group for all dihedral groups $D_{2^n}$, despite the appearance of the latter table.

        \begin{table}[H]
            \centering
            \begin{tabular}{|c|c|c|c|c|c|c|c|c|}
            \hline
               & $1$ & $K_1$ & $K_2$ & $C_2$ & $K_1$ & $K_2$ & $C_4$ & $D_8$ \\
                \hline
              $u_{D_8}$ & -1 & -1 & -1 & -1 & -1 & -1 & -1 & -1 \\
              $u_{H_1}$ & -1 & -1 & 1 & -1 & -1 & 1 & 1 & 1\\
              $u_{H_2}$ & -1 & 1 & -1 & -1 & 1 & -1 & 1 & 1 \\
              $u_{C_4}$ & -1 & 1 & 1 & -1 & 1 & 1 & -1 & 1 \\
              $u_1$ & 1 & -1 & -1 & 1 & 1 & 1 & 1 & 1\\
                \hline
            \end{tabular}
            \caption{The $\F_2$-basis of $B(D_8)^\times$, viewed via its marks.}
        \end{table}

        \begin{table}[H]
            \centering
            \begin{tabular}{|c|c|c|c|c|c|c|c|c|}
            \hline
               & $1$ & $K_1$ & $K_2$ & $C_2$ \\
                \hline
              $u_{D_{2^n}}$ & -1 & -1 & -1 & -1  \\
              $u_{H_1}$ & -1 & -1 & 1 & -1  \\
              $u_{H_2}$ & -1 & 1 & -1 & -1  \\
              $u_{C_{2^{n-1}}}$ & -1 & 1 & 1 & -1 \\
              $u_{C_{2^{n-3}}}$ & 1 & -1 & -1 & 1 \\
              $\vdots$&$\vdots$&$\vdots$&$\vdots$&$\vdots$ \\
              $u_{C_2}$ & 1 & -1 & -1 & 1  \\
              $u_1$ & 1 & -1 & -1 & 1   \\
                \hline
            \end{tabular}
            \caption{The $\F_2$-basis of $B(D_{2^n})^\times$ for $n \geq 4$, viewed via its marks at subgroups of order at most 2. Note $u_{C_{2^{n-3}}}, \dots, u_{C_2}, u_1$ are distinct, their marks differ at subgroups of higher order.}
        \end{table}

    \end{remark}

    \begin{theorem}\label{thm:dihedralsurj}
        Let $G$ be a finite group with a dihedral Sylow $2$-subgroup $S$ of order $2^n$ for some $n\geq3$. Then the restricted dimension homomorphism \[\dim: CF_b(G,p)\to (B(S)^G)^\times\] is surjective.
    \end{theorem}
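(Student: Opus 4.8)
The plan is to reduce to the classification of realizable fusion systems on $S=D_{2^n}$ recorded in Remark \ref{rmk:described}, and then to upgrade an arbitrary Tornehave--Yal\c{c}in preimage of a given unit into a genuinely $G$-stable Borel--Smith function. Since $CF_b(G,2)=CF_b(S)^G$ and, under the mark embedding, $(B(S)^G)^\times$ consists exactly of the $\calF_S(G)$-stable units of $B(S)^\times$, whether the restricted $\dim$ is surjective depends only on the fusion system $\calF_S(G)$. There are only three possibilities, $\calF_S(S)$, $\calF_S^I$, and $\calF_S^{II}$, so it suffices to treat each. The trivial system $\calF_S(S)$ is the case in which $N_G(S)$ controls fusion, already handled by Theorem \ref{thm:fusioncontrolledbynormalizer}; the real content is $\calF_S^I$ and $\calF_S^{II}$, where $N_G(S)$ need not control fusion (for instance $\operatorname{PSL}_2(q)$ realizes $\calF_S^{II}$ with self-normalizing Sylow $2$-subgroup), so the transfer argument of that theorem does not apply.

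The structural point I would extract from Remark \ref{rmk:described} is that both nontrivial systems fuse only subgroups of order $2$: writing $C_2=Z(S)$ for the central involution and $K_1,K_2$ for the two classes of noncentral involutions, $\calF_S^I$ merges $C_2$ with $K_1$ only, while $\calF_S^{II}$ merges all of $C_2,K_1,K_2$; no subgroup of order $\geq 4$ is newly fused. Hence a superclass function $f\in CF_b(S)$ is $G$-stable if and only if $f(C_2)=f(K_1)$ (for $\calF_S^I$), respectively $f(C_2)=f(K_1)=f(K_2)$ (for $\calF_S^{II}$); every other $G$-stability constraint is automatic because $f$ already respects $S$-conjugacy.

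Given $u\in(B(S)^G)^\times$, I would first use surjectivity of $\dim\colon CF_b(S)\to B(S)^\times$ for the $2$-group $S$ (Tornehave--Yal\c{c}in) to pick $g\in CF_b(S)$ with $\dim(g)=u$. As $\dim(g)(P)=(-1)^{g(P)}$ and $u$ is $\calF_S(G)$-stable, $g$ is stable modulo $2$: the integers $\Delta_1:=g(C_2)-g(K_1)$ and $\Delta_2:=g(C_2)-g(K_2)$ are even (the first in both cases, the second also in case $\calF_S^{II}$). Reading the displayed $\Z$-bases of $CF_b(D_{2^n})$, one checks uniformly for $n\geq 3$ that
\begin{align*}
f_{H_2}(C_2)-f_{H_2}(K_1)=1,&\qquad f_{H_2}(C_2)-f_{H_2}(K_2)=0,\\
f_{H_1}(C_2)-f_{H_1}(K_1)=0,&\qquad f_{H_1}(C_2)-f_{H_1}(K_2)=1,
\end{align*}
so that $f_{H_2}$ and $f_{H_1}$ adjust the two discrepancies independently. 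Setting $f:=g-\Delta_1 f_{H_2}$ in case $\calF_S^I$ and $f:=g-\Delta_1 f_{H_2}-\Delta_2 f_{H_1}$ in case $\calF_S^{II}$ then produces a Borel--Smith function with the required equalities at the fused involutions; since $\Delta_1,\Delta_2$ are even we have $f\equiv g\pmod 2$, whence $\dim(f)=\dim(g)=u$, and $f\in CF_b(G,2)$ by the criterion of the previous paragraph. This places $u$ in the image and proves surjectivity.

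The crux --- and exactly what makes this harder than the normalizer-controlled case --- is bridging the gap between the congruence that a Tornehave--Yal\c{c}in preimage automatically satisfies and the honest integral equalities demanded by $G$-stability. The argument succeeds because dihedral fusion is concentrated entirely at the involutions and because $f_{H_1},f_{H_2}$ furnish independent \emph{even} corrections to the values at the two noncentral involution classes relative to the center; no correction is needed or made at subgroups of order $\geq 4$, which carry no extra fusion, so these adjustments cannot disturb $G$-stability elsewhere. The only genuine verifications are the four evaluations of $f_{H_1},f_{H_2}$ above, which are immediate from the tables.
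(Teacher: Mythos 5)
Your proof is correct, and it takes a genuinely different route from the paper's. The paper argues case by case over the three realizable fusion systems on $D_{2^n}$ by explicitly writing down a $\Z$-basis of $CF_b(G)^{\calF}$ and an $\F_2$-basis of $(B(G)^\times)^{\calF}$ and observing that $\dim$ carries the first basis onto the second; this requires computing the full invariant sublattices in each case. You instead start from an arbitrary Tornehave--Yal\c{c}in preimage $g$ of the given unit and \emph{correct} it into a $G$-stable Borel--Smith function, exploiting three facts: the nontrivial dihedral fusion systems fuse only involution classes, so $G$-stability of a superclass function on $S$ reduces to the equalities $f(C_2)=f(K_1)$ (resp.\ also $=f(K_2)$); the stability of the unit $u$ forces those discrepancies of $g$ to be even; and the pair $f_{H_1},f_{H_2}$ has ``discrepancy matrix'' equal to the identity at the two noncentral involution classes relative to the center, so even multiples of them repair the discrepancies without changing $\dim$. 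The only data you need from the tables are four entries rather than the full bases, and the argument makes transparent exactly where dihedral-ness enters (fusion concentrated at involutions plus the existence of independent correctors). The one point worth stating explicitly, which you do address, is that the corrections cannot break stability at subgroups of order at least $4$, since no new fusion occurs there. Both arguments ultimately rest on the same inputs (Tornehave--Yal\c{c}in and the classification of dihedral fusion systems), but yours is the more economical and conceptually sharper of the two.
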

    \begin{proof}
        We directly compute this in cases. Set $\calF := \calF_S(G)$. If $\calF = \calF_S(S)$ is the trivial fusion system then this already holds by Tornehave/Yal\c{c}in's theorem.

        Next, suppose $\calF = \calF_S^I$. We assume without loss of generality that $K_1$ and $Z(S) = C_2$ are fused, using the conventions of Remark \ref{rmk:described}. If $S \cong D_8$, then direct computation shows $CF_b(G)^\calF$ has a $\Z$-basis given by \[\{f_{D_8}, f_{H_1}, f_{H_2} - f_{C_4}, f_1 + f_{C_4}\}\] and $(B(G)^\times)^\calF$ has an $\F_2$-basis given by \[\{u_{D_8}, u_{H_1}, u_{H_2} + u_{C_4}, u_1 + u_{C_4}\},\] so the dimension homomorphism is evidently surjective. If $S \cong D_{2^n}$ for $n \geq 4$, then direct computation shows $CF_b(G)^\calF$ has a $\Z$-basis given by \[\{f_{D_{2^n}}, f_{H_1}, f_{H_2} - f_{C_{2^{n-1}}}, f_{C_{2^{n-3}}} - f_{C_{2^{n-1}}}, \dots , f_{C_2} - f_{C_{2^{n-1}}}, f_1 + f_{C_{2^{n-1}}} \}\] and $(B(G)^\times)^\calF$ has an $\F_2$-basis given by \[\{u_{D_{2^n}}, u_{H_1}, u_{H_2} + u_{C_{2^{n-1}}}, u_{C_{2^{n-3}}} + u_{C_{2^{n-1}}}, \dots , u_{C_2} + u_{C_{2^{n-1}}}, u_1 + u_{C_{2^{n-1}}} \}\] so again the dimension homomorphism is surjective.

        Now, suppose $\calF = \calF_S^{II}$, so both $K_1$ and $K_2$ are fused with $Z(S) = C_2$. If $S \cong D_8$, then direct computation shows $CF_b(G)^\calF$ has a $\Z$-basis given by \[\{f_{D_8}, f_{H_1} + f_{H_2} - f_{C_4}, f_1 + f_{C_4}\}\] and $(B(G)^\times)^\calF$ has an $\F_2$-basis given by \[\{u_{D_8}, u_{H_1} + u_{H_2} + u_{C_4}, u_1 + u_{C_4}\},\] so the dimension homomorphism is evidently surjective. If $S \cong D_{2^n}$ for $n \geq 4$, then direct computation shows $CF_b(G)^\calF$ has a $\Z$-basis given by \[\{f_{D_{2^n}}, f_{H_1} + f_{H_2} - f_{C_{2^{n- 1}}}, f_{C_{2^{n-3}}} - f_{C_{2^{n- 1}}}, \dots, f_{C_2} - f_{C_{2^{n- 1}}}, f_1 + f_{C_{2^{n- 1}}} \}\], and $(B(G)^\times)^\calF$ has an $\F_2$-basis given by \[\{u_{D_{2^n}}, u_{H_1} + u_{H_2} + u_{C_{2^{n- 1}}}, u_{C_{2^{n-3}}} - u_{C_{2^{n- 1}}}, \dots, u_{C_2} - u_{C_{2^{n- 1}}}, u_1 + u_{C_{2^{n- 1}}}\},\] so the dimension homomorphism again is surjective.
    \end{proof}

    \begin{remark}
        We observe that for $S$ dihedral of order at least $8$, $CF_b(G)^{\calF^{I}_S}$ and $(B(G)^\times)^{\calF_S^{I}}$ are maximal subspaces of $CF_b(G)$ and $B(G)^\times$ respectively, and both $CF_b(G)^{\calF^{II}_S}$ and $(B(G)^\times)^{\calF_S^{II}}$ are maximal subspaces of $CF_b(G)^{\calF^{I}_S}$ and $(B(G)^\times)^{\calF_S^{I}}$ respectively.
    \end{remark}

    From Theorem \ref{thm:dihedralsurj} (and Theorem \ref{thm:fusioncontrolledbynormalizer} for dihedral groups of order less than $8$), we immediately obtain the following corollary.
    \begin{corollary}
        Let $G$ be a finite group with a dihedral Sylow $2$-subgroup. Then the Euler characteristic map $\Lambda: \calE_{\F_2}(G) \to O(T(\F_2G))$ is surjective.
    \end{corollary}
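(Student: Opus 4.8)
The plan is to reduce the statement to the equivalence established earlier in this section, namely that over $\F_2$ the map $\Lambda: \calE_{\F_2}(G) \to O(T(\F_2 G))$ is surjective if and only if the restricted dimension homomorphism $\dim: CF_b(G,2) \to (B(S)^G)^\times$ is surjective, where $S$ is a Sylow $2$-subgroup of $G$. This equivalence holds because $\F_2^\times$ is trivial, so every character-tuple factor in the Boltje--Carman decomposition of $O(T(\F_2 G))$ vanishes, leaving only the factor $(B(S)^G)^\times$, on which $\Lambda$ acts by $[C] \mapsto \dim(h_C)$ via Theorem \ref{thm:etrivtodecomp}. Thus it suffices to verify surjectivity of this restricted dimension homomorphism for every $G$ whose Sylow $2$-subgroup $S$ is dihedral.

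Next I would split into cases according to the order $2^n$ of $S$. When $n \geq 3$, surjectivity of $\dim: CF_b(G,2) \to (B(S)^G)^\times$ is exactly the conclusion of Theorem \ref{thm:dihedralsurj}, so nothing further is required in that range. When $n \in \{1,2\}$, the group $S$ is either cyclic of order $2$ or the Klein four-group, i.e. $S \cong C_2$ or $S \cong V_4$, and in both cases $S$ is abelian. By Burnside's theorem (as recorded in the remark following Theorem \ref{thm:fusioncontrolledbynormalizer}), every abelian $p$-group is resistant, so $N_G(S)$ controls fusion in $S$; Theorem \ref{thm:fusioncontrolledbynormalizer} then yields surjectivity of $\Lambda$ directly.

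Since these two ranges exhaust all dihedral Sylow $2$-subgroups, the corollary follows. I expect no genuine obstacle here: all of the substantive work---the case analysis over the three realizable fusion systems on $D_{2^n}$ together with the explicit $\Z$-bases of $CF_b(G)^{\calF}$ and $\F_2$-bases of $(B(G)^\times)^{\calF}$---has already been carried out in proving Theorem \ref{thm:dihedralsurj}. The only point requiring attention is the boundary, namely confirming that the small dihedral groups of order $2$ and $4$ omitted from Theorem \ref{thm:dihedralsurj} are covered, and this is immediate from their abelianness and the resistance of abelian $p$-groups.
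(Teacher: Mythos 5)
Your proposal is correct and follows exactly the paper's argument: the paper likewise deduces the corollary by combining Theorem \ref{thm:dihedralsurj} for $|S| \geq 8$ with Theorem \ref{thm:fusioncontrolledbynormalizer} for the abelian cases $S \cong C_2$ and $S \cong V_4$, via the same reduction of surjectivity of $\Lambda$ over $\F_2$ to surjectivity of $\dim: CF_b(G,2) \to (B(S)^G)^\times$. No gaps.
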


    \section{Surjectivity of the Lefschetz homomorphism: the case of $p$ odd} \label{sec:podd}

    Contrary to the $p = 2$ case, the $p$ odd case revolves around the coherent character tuple \[\left(\prod_{P \in s_p(G)} \Hom(N_G(P)/P,k^\times)\right)' \leq O(T(kG)),\] since for odd primes $p$, $B(P)^\times = \{\pm[P/P]\}$ for any $p$-group $P$. Recall we have the decomposition \[ \left(\Hom(N_G(P)/P,k^\times)\right)'  = \Hom(G,k^\times) \times \left(\prod_{P \in s_p(G)} \Hom(N_G(P)/PC_G(P),k^\times)\right)'.\]  From this decomposition, we record a straightforward result.

    \begin{theorem}
        Let $p$ be an odd prime, $k$ any field of characteristic $p$, and $G$ a $p$-nilpotent group. Then $\Lambda: \calE_k(G) \to O(T(kG))$ is surjective.
    \end{theorem}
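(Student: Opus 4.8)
The plan is to exploit the decomposition of $O(T(kG))$ recorded above together with Frobenius's normal $p$-complement theorem, showing that for $G$ $p$-nilpotent the only surviving factors of $O(T(kG))$ are precisely those onto which $\Lambda$ manifestly surjects. Since $p$ is odd and $S$ is a $p$-group, $B(S)^\times = \{\pm[S/S]\}$, whence $(B(S)^G)^\times = \{\pm 1\}$. Combining the isomorphism $\kappa$ with Proposition \ref{prop:furtherdecomp} gives
\[O(T(kG)) \cong \{\pm 1\} \times \Hom(G, k^\times) \times \calR_{G,k},\]
so it suffices to prove that $\calR_{G,k}$ is trivial and that $\Lambda$ hits the first two factors.

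First I would dispose of the reduced coherent character tuple. By Frobenius's normal $p$-complement theorem, $G$ is $p$-nilpotent if and only if $N_G(P)/C_G(P)$ is a $p$-group for every $p$-subgroup $P$ of $G$. Consequently $N_G(P)/PC_G(P)$, being a quotient of $N_G(P)/C_G(P)$, is a $p$-group for every $P \in s_p(G)$. Since $k$ has characteristic $p$, its unit group $k^\times$ contains no nontrivial element of $p$-power order, so every homomorphism from a $p$-group to $k^\times$ is trivial; that is, $\Hom(N_G(P)/PC_G(P), k^\times) = 1$ for all $P \in s_p(G)$. Hence $\calR_{G,k} = 1$, and the target of $\Lambda$ collapses to $\{\pm 1\} \times \Hom(G, k^\times)$.

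It then remains to surject onto these two factors, which is immediate from the classification of endotrivial complexes. Under the identification of Theorem \ref{thm:etrivtodecomp}, the first coordinate of $\Lambda([C])$ is $\dim(h_C)$; taking $C = k[1]$ gives $h_C \equiv 1$ on $s_p(G)$, so $\dim(h_C) \equiv -1$, which together with $C = k[0]$ realizes both signs. For the $\Hom(G, k^\times)$ factor, the torsion subgroup $\Hom(G, k^\times) \hookrightarrow \calE_k(G)$ of Corollary \ref{cor:etrivclassification}, given by $\omega \mapsto (k_\omega)[0]$, maps under $\Lambda$ isomorphically onto this factor: its $h$-marks vanish (so its sign is trivial), while its homology records exactly $\omega$ at the trivial subgroup. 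Thus $\Lambda$ surjects onto every factor of $O(T(kG))$.

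The only genuine content is the reduction in the second paragraph: recognizing that $p$-nilpotence is precisely the hypothesis forcing every automizer $N_G(P)/PC_G(P)$ to be a $p$-group, which in characteristic $p$ annihilates $\calR_{G,k}$ in its entirety. Once this is in hand the remaining surjectivity is routine, so I do not expect any serious obstacle here; the subtlety is purely in isolating the correct structural input on $G$.
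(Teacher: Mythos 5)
Your proposal is correct and follows essentially the same route as the paper: invoke Frobenius's normal $p$-complement theorem to force every automizer $N_G(P)/PC_G(P)$ to be a $p$-group, hence kill $\calR_{G,k}$ in characteristic $p$, and then hit the surviving factors $\{\pm 1\}$ and $\Hom(G,k^\times)$ with $k[1]$ and $(k_\omega)[0]$. The paper leaves this last step as ``routine''; you have simply written it out.
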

    \begin{proof}
        This follows from \cite[Proposition 6.3]{BC23}, we spell out the details here. The Frobenius normal $p$-complement theorem (see e.g. \cite[Theorem 7.4.5]{Gor68}) states that a group $G$ is $p$-nilpotent if and only if $N_G(P)/C_G(P)$ is a $P$-group for all $p$-subgroups $P$ of $G$. Therefore, $\ker(\pi_1)$ is trivial, and under the decomposition $O(T(kG)) = \Z/2\Z \times \Hom(G, k^\times)$, it is routine to check that $\Lambda$ is surjective.
    \end{proof}

    \begin{remark}
        Similarly, for $p$ odd, any group $G$ with $|G|$ coprime to $p-1$ satisfies $O(T(\F_pG)) \cong (B(S)^\times)^G = \Z/2\Z$; therefore $\Lambda$ will be surjective over $\F_p$, but not necessarily over extensions $k/\F_p$. This is an edge case, as $p$-local structure is not relevant.
    \end{remark}

    Recall that for any field extension $k/\F_p$, we have an identification $\calT\calE_k(G) \cong \calT\calE_{\F_p}(G)$. For the general case, $\Lambda: \calE_k(G) \to O(T(kG))$ restricts to a homomorphism \[\Lambda: \calT\calE_k(G) = \calT\calE_{\F_p}(G) \to \Z/2\Z \times \left(\prod_{P \in s_p(G)} \Hom(N_G(P)/PC_G(P),\F_p^\times)\right)',\] since $\calT\calE(G) = \{ C\in \calE_k(G) \mid\calH_C(1) = 1 \in \Hom(G,k^\times) \}$. It suffices to determine the image of $\Lambda$ in $\ker(\pi_1)$, since the endotrivial complex $k[1]$ satisfies $\Lambda(k[1]) = -[G/G] \in (B(S)^G)^\times$ and $\calH(k[1]) = (1)_{P\in s_p(G)}$. Moreover, for any generator $C$ not $k[1]$ of $\calT\calE(G)$ (with respect to the basis of $\calT\calE(G) \cong CF_b(G)$ associated to the real irreps of $G$), $C$ has even h-marks, so the projection of $\Lambda(C)$ onto $(B(S)^G)^\times$ is trivial.

    From Corollary \ref{cor:etrivclassification}, $\calT\calE(G)$ has at minimum $c(G)$ generators, with one of those generators, $k[1]$, belonging to $\ker(\pi_1)$. Set $\calT\calE'(G) := \calT\calE(G)/\langle k[1]\rangle$, then we have a group homomorphism $\Lambda'$ induced from $\Lambda$ as follows \[\Lambda': \calT\calE'(G) \to \left(\prod_{P \in s_p(G)} \Hom(N_G(P)/PC_G(P),\F_p^\times)\right)'.\]
    Clearly, $\Lambda'$ is surjective if and only if $\Lambda: \calE_{\F_p}(G) \to O(T(\F_pG))$ is surjective. Since $\calT\calE'(G)$ has $\Z$-rank $c(G) - 1$, the following statement follows easily.

    \begin{observation}\label{obs:gens}
        Let $p$ be an odd prime and $k$ any field of characteristic $p$. If the group of reduced coherent character tuples $\calR_G$ has a minimum generating set (with respect to size) of at least $c(G)$ elements, then $\Lambda: \calE_k(G) \to O(T(kG))$ is not surjective.
    \end{observation}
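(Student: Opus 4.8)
The plan is to read the conclusion off the reductions assembled just before the statement, the essential input being a comparison of minimal numbers of generators. First I would recall that, under the identification $O(T(kG)) \cong (B(S)^G)^\times \times \Hom(G,k^\times) \times \calR_{G,k}$, surjectivity of $\Lambda$ is equivalent to surjectivity of the composite of $\Lambda$ with the projection onto each of the three factors. For $p$ odd the first factor is $\Z/2\Z$ and is hit by $\Lambda(k[1]) = -[G/G]$, while the middle factor is hit by the torsion subgroup $\Hom(G,k^\times) \leq \calE_k(G)$ mapping isomorphically onto it. Thus surjectivity of $\Lambda$ reduces to surjectivity of the projection onto $\calR_{G,k}$, which for $k = \F_p$ is exactly the map $\Lambda' \colon \calT\calE'(G) \to \calR_G$.

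Next I would pin down the number of generators of the source. Under $h$, the group $\calT\calE(G)$ is identified with the free abelian group $CF_b(G,p)$ of rank $c(G)$, and $k[1]$ corresponds to its h-mark function, the constant function $1$. This is a primitive vector of $CF_b(G,p)$: if it were $n$ times another Borel-Smith function with $n>1$, that function would take the value $1/n$. Hence $\langle k[1]\rangle$ is a direct summand and $\calT\calE'(G) = \calT\calE(G)/\langle k[1]\rangle$ is free abelian of rank $c(G)-1$. Consequently $\calT\calE'(G)$ is generated by $c(G)-1$ elements, and therefore so is $\im(\Lambda')$, being the image of a group generated by $c(G)-1$ elements.

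The decisive step is then purely arithmetic: a subgroup of $\calR_G$ that can be generated by at most $c(G)-1$ elements cannot equal $\calR_G$ once $\calR_G$ requires at least $c(G)$ generators. This forces $\im(\Lambda') \subsetneq \calR_G$, so $\Lambda'$ is not surjective, and by the equivalence above $\Lambda \colon \calE_{\F_p}(G) \to O(T(\F_p G))$ is not surjective. To upgrade this to an arbitrary field $k$ of characteristic $p$, I would use that every indecomposable representative of an element of $\calT\calE_k(G) \cong \calT\calE_{\F_p}(G)$ descends to $\F_p$, so the image of $\Lambda$ in the reduced-tuple factor stays inside $\calR_G \subseteq \calR_{G,k}$ and is still generated by at most $c(G)-1$ elements; surjectivity onto $\calR_{G,k}$ would then force $\calR_{G,k} = \calR_G$ and present $\calR_G$ with $c(G)-1$ generators, again a contradiction.

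I do not expect a genuine obstacle here, since the argument is a bookkeeping count once the machinery is in place; the only points demanding care are the two facts that make the count tight: that $k[1]$ is primitive (so the rank drops by exactly one rather than leaving torsion that would inflate the minimal generator count) and that $\Lambda(k[1])$ is trivial in the $\calR$-factor (so $\Lambda'$ descends to the quotient $\calT\calE'(G)$). The mild subtlety is the general-$k$ step, where one must observe that enlarging the field cannot enlarge the relevant image beyond $\calR_G$.
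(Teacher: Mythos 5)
Your proposal is correct and follows essentially the same route as the paper, which derives the observation directly from the reduction to $\Lambda'\colon \calT\calE'(G)\to\calR_G$ together with the fact that $\calT\calE'(G)$ has $\Z$-rank $c(G)-1$, so its image cannot generate a group requiring $c(G)$ generators. The extra details you supply (primitivity of $k[1]$ in $CF_b(G,p)$ and the descent argument for general $k$ via $\calT\calE_k(G)\cong\calT\calE_{\F_p}(G)$) are exactly the points the paper leaves implicit, and they are handled correctly.
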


    However, explicitly determining the structure of $\calR_G$ is difficult in general.

    \subsection{Cyclic Sylow $p$-subgroups}

    In this section, we assume $k$ is a field extension of $\F_p$ contained in $\overline{\F}_p$. In particular, $\Aut(k)$ has a unique cyclic subgroup of order $p-1$. We turn to perhaps the most natural case to consider, groups with cyclic Sylow $p$-subgroups, with our focus on the map $\Lambda'$. First, we make an easy but critical observation: the coherence condition plays no significant role for $\Lambda'$. This is an extension of \cite[Corollary 6.2]{BC23}, where the phenomenon is observed for groups with normal cyclic Sylow $p$-subgroups.

    \begin{prop}\label{prop:nocoherenceforcyclics}
        Let $G$ be a group with cyclic Sylow $p$-subgroup $S$. Let $(\chi_P)_{P \in s_p(G)} \in \left(\prod_{P \in s_p(G)} \Hom(N_G(P)/PC_G(P),k^\times)\right)'$. Let $Q \leq S$ be a subgroup of $S$ and let $x \in N_G(Q)$ have unique decomposition $x = x_p x_{p'} = x_{p'}x_{p}$ into $p$- and $p'$-elements. We have $\chi_{\langle x_p \rangle}(x) = 1$ and $x \in C_G(\langle x_p \rangle).$ In particular, \[\left(\prod_{P \in s_p(G)} \Hom(N_G(P)/PC_G(P),k^\times)\right)' = \left(\prod_{P \in s_p(G)} \Hom(N_G(P)/PC_G(P),k^\times)\right)^G.\]
    \end{prop}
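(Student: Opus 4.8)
The plan is to separate the proposition into two elementary group-theoretic assertions, which come essentially for free, and then use the cyclic structure to show the coherence condition collapses. The single structural input I would rely on throughout is that a cyclic Sylow $p$-subgroup forces every $p$-subgroup of $G$ to be cyclic, so that any two $p$-subgroups sitting inside a common cyclic $p$-group are comparable under inclusion.

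First I would establish $x \in C_G(\langle x_p\rangle)$. Since $x_p$ is the $p$-part of $x$, it lies in the cyclic group $\langle x\rangle$ and is therefore a power of $x$; hence $x$ commutes with $x_p$, and so $x$ centralizes all of $\langle x_p\rangle$, giving both $x \in N_G(\langle x_p\rangle)$ and $x \in C_G(\langle x_p\rangle)$. The claim $\chi_{\langle x_p\rangle}(x) = 1$ is then immediate: as $x \in C_G(\langle x_p\rangle) \subseteq \langle x_p\rangle C_G(\langle x_p\rangle)$, the image of $x$ in the quotient $N_G(\langle x_p\rangle)/\langle x_p\rangle C_G(\langle x_p\rangle)$ is trivial, so every homomorphism on that quotient, and in particular $\chi_{\langle x_p\rangle}$, sends it to $1$.

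For the displayed equality I would argue as follows. The coherent group $\calR_{G,k}$ sits inside the ambient $G$-stable group $\left(\prod_{P\in s_p(G)}\Hom(N_G(P)/PC_G(P),k^\times)\right)^G$ by its very definition in Proposition \ref{prop:furtherdecomp}, so only the reverse inclusion needs proof. Taking a $G$-stable tuple $(\varphi_P)$, I would verify the coherence identity $\varphi_P(xPC_G(P)) = \varphi_{P\langle x_p\rangle}(xP\langle x_p\rangle C_G(P\langle x_p\rangle))$ for each $P \in s_p(G)$ and $x \in N_G(P)$. Here $x_p \in \langle x\rangle \leq N_G(P)$, so $\langle x_p\rangle$ normalizes $P$ and $P\langle x_p\rangle$ is a genuine cyclic $p$-subgroup containing both $P$ and $\langle x_p\rangle$; by cyclicity these two subgroups are comparable. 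If $\langle x_p\rangle \leq P$, then $P\langle x_p\rangle = P$ and the identity is a tautology. If instead $P \leq \langle x_p\rangle$, then $P\langle x_p\rangle = \langle x_p\rangle$, and since $x \in C_G(\langle x_p\rangle) \subseteq C_G(P)$ both relevant cosets are trivial, so each side equals $1$. In either case coherence holds, which yields the reverse inclusion.

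I do not anticipate a serious obstacle: the content is a structural reduction rather than any hard estimate, and once the comparability dichotomy is in place the coset computations are routine. The only points demanding care are bookkeeping ones, namely confirming that $P\langle x_p\rangle$ really is a $p$-group (so that the reduction to comparable subgroups is legitimate) and interpreting the index $P\langle x_p\rangle$ relative to the chosen conjugacy-class representatives in $s_p(G)$, where $G$-stability ensures the evaluation is independent of the representative.
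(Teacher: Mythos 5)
Your proof is correct and follows essentially the same route as the paper's: you deduce $x \in C_G(\langle x_p\rangle)$ from the commuting $p$/$p'$-decomposition, conclude $\chi_{\langle x_p\rangle}(x)=1$, and then use the total ordering of subgroups of the cyclic $p$-group $P\langle x_p\rangle$ to see that the coherence condition is either tautological or forced by $x \in C_G(\langle x_p\rangle) \subseteq C_G(P)$. The only difference is cosmetic: you spell out the comparability dichotomy explicitly where the paper treats it more tersely.
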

    \begin{proof}
        By assumption, $x_p' \in C_G(x_p)$ and hence $x_p' \in C_G(\langle x_p \rangle)$. Obviously $x_p \in C_G(\langle x_p \rangle)$ as well, therefore $x \in C_G(\langle x_p \rangle)$ and $\chi_{\langle x_p \rangle}(x) = 1$ since $\chi_{\langle x_p \rangle}\in \Hom(N_G(\langle x_p \rangle)/C_G(\langle x_p \rangle), \F_p^\times)$. Now, if $Q < Q\langle x_p\rangle = \langle x_p \rangle$, then the coherence condition implies $\chi_Q(x) = \chi_{\langle x_p \rangle}(x) = 1$, which is redundant information since $x \in C_G(\langle x_p \rangle)$, hence $x \in C_G(Q)$ as well.
    \end{proof}

    \begin{remark}\label{rmk:cohomologystuff}
        Recall (see \cite{Sw59}) that for $p$ odd, the groups with periodic cohomology over fields of characteristic $p$ are precisely those with cyclic Sylow $p$-subgroups. If $G$ has cyclic Sylow $p$-subgroup of order $p^r$, then $G$ has period $2\Phi(S)$, where $\Phi(S)$ denotes the number of automorphisms of $S$ that are given by conjugation by elements of $G$, i.e. the order of $\Aut_G(S)\cong N_G(S)/C_G(S)$. Moreover, we have an inclusion \[N_G(S)/C_G(S) \cong \Aut_G(S) \leq \Aut(S) \cong (\Z/p^r\Z)^\times\cong C_{(p-1)p^{r-1}}.\]  In particular, $\Aut_G(S)$ is cyclic. Since $S$ is a Sylow $p$-subgroup, $\Aut_G(S)$ is a $p'$-group, therefore $\Aut_G(S)\hookrightarrow O_{p'}(\Aut(S)) \cong C_{p-1}$. It follows that \[\Hom(N_G(S)/C_G(S), k^\times) \cong N_G(S)/C_G(S),\] as the homomorphisms all factor as endomorphisms of $N_G(S)/C_G(S)$ followed by an inclusion $N_G(S)/C_G(S) \hookrightarrow k^\times$, with $O_{p'}(k^\times) \cong C_{p-1}$. Since $N_G(S)/C_G(S)$ is cyclic, its group of endomorphisms identifies with itself.

        In fact, for any nontrivial $p$-subgroup $P$ of $G$ contained in $S$, we have a similar inclusion $N_G(P)/C_G(P) \cong \Aut_G(P)\leq  \Aut(P)$ with $\Aut_G(P)$ contained in the largest $p'$-subgroup of $\Aut(P)$, which again is isomorphic to $C_{p-1}$.

        On the other hand, every automorphism of $S$ induces an automorphism on $P$, and elementary number theory implies every automorphism of $P$ arises in this way. Therefore, we have a surjective group homomorphism $\Aut(S) \to \Aut(P)$ with kernel a $p$-subgroup of $\Aut(S)$. In particular, the composition \[N_G(S)/C_G(S) \hookrightarrow \Aut(S)\twoheadrightarrow \Aut(P)\] is injective as well. We have a commutative diagram as follows.
        \begin{figure}[H]
            \centering
            \begin{tikzcd}
                N_G(S) \ar[hookrightarrow, r] \ar[d, twoheadrightarrow] & N_G(P) \ar[r] \ar[d, twoheadrightarrow] & \Aut(P) \\
                \Aut_G(S) \ar[r, dotted] & \Aut_G(P) \ar[ur, hookrightarrow]
            \end{tikzcd}
        \end{figure}
        Because the composition $\Aut_G(S) \to \Aut(P)$ is injective, it follows that the dotted arrow is also injective. In particular, we have an inequality \[[N_G(S):C_G(S)] \leq [N_G(P):C_G(P)].\]

        On the other hand, Burnside's fusion theorem (see \cite[Proposition 4.5]{AKO11}) implies that $\calF_S(G) = \calF_S(N_S(G))$, therefore any automorphism $\varphi \in \Aut_G(P)$ is of the form $c_g$ for some $g \in N_G(S)$. Thus the map $\Aut_S(G) \to \Aut_S(P)$ is surjective as well, so for all nontrivial $p$-subgroups $P$ of $S$, we have an isomorphism $\Aut_G(S) \cong \Aut_G(P)$. We summarize this now.
    \end{remark}

    \begin{prop}\label{prop:automizers}
        Let $p$ be a prime and let $G$ be a $p$-group with cyclic Sylow $p$-subgroup $S$. For all nontrivial $p$-subgroups $P$ of $S$, we have an isomorphism \[\Aut_G(S) \cong \Aut_G(P)\] induced by restriction of $c_g \in \Aut_G(S)$. Moreover, $\Aut_G(P)$ is a cyclic $p'$-group and if $p$ is odd, $G$ has period $2\Phi(S)$.
    \end{prop}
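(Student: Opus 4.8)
The plan is to assemble the chain of observations already developed in Remark \ref{rmk:cohomologystuff} into the three assertions. First I would establish that $\Aut_G(S)$ is a cyclic $p'$-group. Since $S$ is cyclic it is abelian, so $S \leq C_G(S)$; a standard argument shows $\Aut_G(S) = N_G(S)/C_G(S)$ is a $p'$-group, since if $x \in N_G(S)$ has $p$-part $x_p$, then $S\langle x_p\rangle$ is a $p$-subgroup containing the Sylow subgroup $S$, forcing $x_p \in S \leq C_G(S)$, so $xC_G(S)$ has trivial $p$-part. As $\Aut_G(S)$ embeds in $\Aut(S) \cong (\Z/p^r\Z)^\times$, whose $p'$-part is the cyclic group $C_{p-1}$, it is cyclic.

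Next I would set up the restriction map. Because $S$ is cyclic, each nontrivial subgroup $P \leq S$ is the unique subgroup of its order, hence characteristic in $S$; restriction therefore yields a homomorphism $\Aut(S) \to \Aut(P)$. For cyclic groups this map is surjective, and identifying it with the reduction $(\Z/p^r\Z)^\times \to (\Z/p^s\Z)^\times$ one sees its kernel has order $p^{r-s}$, a $p$-group. In particular $N_G(S) \leq N_G(P)$, and the composite $\Aut_G(S) \hookrightarrow \Aut(S) \twoheadrightarrow \Aut(P)$ factors through $\Aut_G(P)$, producing the dotted arrow of the diagram in Remark \ref{rmk:cohomologystuff}.

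The isomorphism $\Aut_G(S) \cong \Aut_G(P)$ then splits into injectivity and surjectivity. Injectivity is immediate: the displayed composite is injective because $\Aut_G(S)$ is a $p'$-group while the kernel of $\Aut(S) \twoheadrightarrow \Aut(P)$ is a $p$-group, so their intersection is trivial; hence the dotted arrow is injective. Surjectivity is the crux, and is where I expect the real work to sit: I would invoke Burnside's fusion theorem (\cite[Proposition 4.5]{AKO11}), which applies precisely because $S$ is abelian, to conclude $\calF_S(G) = \calF_S(N_G(S))$. Thus every automorphism in $\Aut_G(P)$ is induced by conjugation by some $g \in N_G(S) \leq N_G(P)$, and the corresponding $c_g|_S \in \Aut_G(S)$ maps onto it. Combining the two yields the isomorphism, and transporting the structure of $\Aut_G(S)$ across it shows each $\Aut_G(P)$ is a cyclic $p'$-group. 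Finally, for the period claim I would simply cite Swan's theorem \cite{Sw59}: for $p$ odd, the groups with periodic $\F_p$-cohomology are exactly those with cyclic Sylow $p$-subgroups, of period $2\Phi(S)$ with $\Phi(S) = |\Aut_G(S)|$.
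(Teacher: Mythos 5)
Your proposal is correct and follows essentially the same route as the paper: the paper's proof simply points to Remark \ref{rmk:cohomologystuff}, which contains exactly the chain you assemble — $\Aut_G(S)$ is a cyclic $p'$-group inside $(\Z/p^r\Z)^\times$, the kernel of the surjection $\Aut(S)\twoheadrightarrow\Aut(P)$ is a $p$-group (giving injectivity of the restriction), Burnside's fusion theorem gives surjectivity, and Swan's theorem gives the period $2\Phi(S)$. The only cosmetic difference is that the paper dispatches $p=2$ separately by noting $\Aut(\Z/2^r\Z)$ is a $2$-group so the automizer is trivial, whereas your argument handles both parities uniformly.
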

    \begin{proof}
        If $p$ is odd, this is proven in Remark \ref{rmk:cohomologystuff}. Otherwise if $p = 2$, this trivially holds since in this case, $N_G(S)/C_G(S)$ is a $2'$-group but $\Aut(\Z/2^r\Z)$ is a $2$-group, hence $\Aut_G(P)$ is trivial for any $p$-subgroup $P$ of $G$.
    \end{proof}

    We now will prove $\Lambda'$, hence $\Lambda$, is surjective. The proof will rely on induction on the order of Sylow $p$-subgroups: we provide the base case as an individual lemma.

    \begin{lemma}\label{lem:orderp}
        Let $G$ be a finite group with Sylow $p$-subgroups having order $p$. Then $\Lambda: \calE_{k}(G) \to O(T(kG))$ is surjective.
    \end{lemma}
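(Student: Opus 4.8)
The plan is to exploit the fact that when $|S| = p$, the poset of $p$-subgroups is extremely simple: the only nontrivial $p$-subgroups are the conjugates of $S$ itself, and there is a single conjugacy class of nontrivial cyclic $p$-subgroups. By Proposition~\ref{prop:nocoherenceforcyclics}, the coherence condition is vacuous, so $\calR_G = \left(\prod_{P \in s_p(G)} \Hom(N_G(P)/PC_G(P), \F_p^\times)\right)^G$, and the only subgroups contributing a nontrivial factor are $P = 1$ (which gives $\Hom(G, \F_p^\times)$, already split off into the $\Hom(G,k^\times)$ component) and $P = S$. Hence $\calR_G \cong \Hom(N_G(S)/C_G(S), \F_p^\times) \cong \Aut_G(S)$, using Proposition~\ref{prop:automizers} and the fact that $\Aut_G(S)$ is a cyclic $p'$-group embedding into $\F_p^\times$. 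So I would reduce the surjectivity of $\Lambda$ to producing a single endotrivial complex whose reduced character tuple generates this cyclic group $\Aut_G(S)$ of order $d := [N_G(S):C_G(S)]$.

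First I would pin down the target: by Observation~\ref{obs:gens} and the preceding discussion, it suffices to show $\Lambda'\colon \calT\calE'(G) \to \calR_G \cong \Aut_G(S)$ is surjective, i.e. to exhibit an endotrivial complex $C \in \calT\calE(G)$ with $\calH_C(S)$ equal to a chosen generator of $\Hom(\Aut_G(S), \F_p^\times)$. The natural candidate is built from the principal block or, more concretely, from a Green correspondent / relative syzygy adapted to $N := N_G(S)$. Since $S$ is cyclic of order $p$ and $N_G(S)$ controls fusion by Burnside (as in Remark~\ref{rmk:cohomologystuff}), I would first solve the problem for $N = N_G(S)$, where $S \trianglelefteq N$, and then transport the answer to $G$ via restriction/induction along the Green correspondence, which is a splendid equivalence between the principal blocks of $kG$ and $kN$. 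The key computation for $N$: because $S \trianglelefteq N$ and $N/S$ acts on $S$ through $\Aut_G(S) \cong C_d$ with $d \mid p-1$, the relevant endotrivial complexes of $kN$-modules can be read off from the $\Z$-basis of $\calE_k(N)$, and the Brauer construction at $S$ produces a one-dimensional $k[N_G(S)/S]$-module whose underlying character I can arrange to be any homomorphism $\Aut_G(S) \to \F_p^\times$ by taking an appropriate tensor power.

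The main obstacle I expect is the \emph{transport step}: verifying that $\calH_C(S)$ for the complex $C$ over $G$ really agrees with the character I built over $N_G(S)$, and that passing to $G$ does not force $C$ out of $\calT\calE(G)$ (i.e. that $\calH_C(1) = 1$ can be maintained). The cleanest route is to use that the Brauer construction $-(S)\colon K^b({}_{kG}\triv) \to K^b({}_{k[N_G(S)/S]}\triv)$ intertwines the Green correspondence, so $\calH_C(S)$ is computed entirely inside $N_G(S)/S$, where $\Aut_G(S)$ acts faithfully and every character of $\F_p^\times$ is realized. I would therefore structure the proof as: (i) identify $\calR_G \cong \Aut_G(S)$ cyclic of order $d$; (ii) construct over $N_G(S)$ an endotrivial complex whose value at $S$ under $\calH$ is a generator of $\Hom(\Aut_G(S), \F_p^\times)$, using that $S$ is normal and of order $p$; (iii) lift along Green correspondence to an endotrivial complex $C$ of $kG$-modules and check, via naturality of the Brauer construction, that $\calH_C(S)$ is the desired generator; and (iv) conclude that $\Lambda'$ hits a generator of the cyclic group $\calR_G$, hence is surjective, whence so is $\Lambda$. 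The delicate point throughout is keeping track of the distinction between $k$ and $\F_p$ coefficients, but since $\calR_G = \calR_{G,\F_p}$ and all the relevant characters factor through $\F_p^\times$, the descent is automatic by the descent theorem quoted in the excerpt.
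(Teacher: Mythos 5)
Your reduction of the target is correct and matches the paper: with $|S|=p$ the coherence condition is vacuous (Proposition \ref{prop:nocoherenceforcyclics}), the trivial subgroup contributes nothing to the reduced tuple, and $\calR_G \cong \Hom(N_G(S)/C_G(S),\F_p^\times)$ is cyclic of order $d:=[N_G(S):C_G(S)]$ by Proposition \ref{prop:automizers}. But the central step of your plan has a genuine gap. Everything hinges on producing an endotrivial complex $C$ with $\calH_C(S)$ a \emph{generator} of this cyclic group of order $d$, and your justification is that you ``can arrange [the character] to be any homomorphism by taking an appropriate tensor power.'' Tensor powers of a fixed $C$ only sweep out the cyclic subgroup generated by $\calH_C(S)$; if $\calH_C(S)$ had order strictly less than $d$, no tensor power would help. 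So the claim that the generator of $\calT\calE(N_G(S))'$ hits a generator of $\Hom(\Aut_G(S),\F_p^\times)$ is exactly the content of the lemma, and you assert it rather than prove it. The paper closes precisely this gap using the periodicity of the cohomology of $G$: since $G$ has a cyclic Sylow $p$-subgroup, its period is exactly $2\Phi(S)=2d$, so the truncated periodic resolution $X$ of $k$ of length $2d$ satisfies $X = d\cdot C$ and generates $\ker(\Lambda')$ (a shorter complex in the kernel would yield a shorter period). Then $\calT\calE(G)'/\langle X\rangle \cong \Z/d\Z$ injects into $\calR_G$, and surjectivity follows by comparing orders. Some substitute for this periodicity input is indispensable; without it your step (ii) does not go through.

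A secondary issue is the transport step. Green correspondence is a correspondence of modules, not of complexes, and ``lifting an endotrivial complex of $kN_G(S)$-modules along the Green correspondence'' to an endotrivial complex of $kG$-modules is not a defined operation; invoking a splendid Rickard equivalence between the principal blocks (Rouquier's theorem for cyclic defect) would be a very heavy hammer and would still require an argument that the transported object is endotrivial with the right $\calH$-value. The fact you actually want --- that restriction induces an isomorphism $\calT\calE(G)\cong\calT\calE(N_G(S))$ compatible with $\calR_G=\calR_{N_G(S)}$ --- is what the paper uses in the inductive step of Theorem \ref{thm:cyclicsurj}, but for the order-$p$ base case no reduction to $N_G(S)$ is needed at all: the generator $C$ of $\calT\calE(G)'$ and the periodic resolution $X$ live directly over $G$.
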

    \begin{proof}
        It suffices to show $\calT\calE(G) \to \calR_G$ is surjective. Let $S$ be a Sylow $p$-subgroup of $G$. By Proposition \ref{prop:nocoherenceforcyclics}, $\calR_G \cong \Hom(N_G(S)/C_G(S), \F_p^\times)$, since the 1-term in $\calR_G$ (which is trivial) has no effect. Moreover, from Remark \ref{rmk:cohomologystuff}, $\Hom(N_G(S)/C_G(S), k^\times) \cong N_G(S)/C_G(S)$. Hence $|\calR_G| = [N_G(S):C_G(S)]$ as well.

        From the classification of endotrivial complexes, $\calT\calE(G)$ is generated by $k[1]$ and a three-term complex $C$ with h-mark 2 at the trivial subgroup 1 and h-mark 0 at all other $p$-subgroups. Therefore, $C$ generates $\calT\calE(G)'.$ Explicitly, we have $\Res^G_S C \simeq kS \to kS \to k$, a truncated periodic resolution of $k$ as $kS$-module - however $\calH_C(S) \neq k$ unless $[N_G(S):C_G(S)] = 1$.

        Since $G$ is periodic with period $2[N_G(S):C_G(S)]$, there is a chain complex $X$ that is a truncated periodic resolution of the trivial $k G$-module $\F_p$ with $2[N_G(S):C_G(S)] + 1$ nonzero terms. It follows that $X$ is endotrivial with h-mark $2[N_G(S):C_G(S)]$ at 1 and $0$ at $S$. Additionally, $X \in \ker(\Lambda') \leq \calT\calE(G)'$, and in fact $X$ must generate $\ker(\Lambda')$, since another shorter complex generating $\ker(\Lambda')$ would contradict the period of $G$. Finally $[N_G(S):C_G(S)]\cdot C = X \in \calT\calE(G)'$, as the h-marks of these complexes agree. Therefore we have an injective group homomorphism $\Lambda': \calT\calE(G)'/\langle X \rangle \hookrightarrow \calR_G$, and since both sides are finite groups of the same order, we conclude $\Lambda'$ is surjective, as desired.
    \end{proof}

    \begin{theorem}\label{thm:cyclicsurj}
        Let $G$ be a finite group with a cyclic Sylow $p$-subgroup $S$, and let $k$ be a field of characteristic $p$ contained in $\overline{\F}_p$. Then $\Lambda: \calE_{k}(G) \to O(T(kG))$ is surjective.
    \end{theorem}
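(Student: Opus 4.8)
The plan is to argue by induction on the order $p^r$ of the cyclic Sylow $p$-subgroup $S$, taking Lemma~\ref{lem:orderp} as the base case $r=1$. Write $1 = S_0 < S_1 < \cdots < S_r = S$ for the chain of subgroups of $S$; since every $p$-subgroup of $G$ is $G$-conjugate to exactly one $S_i$, these represent all conjugacy classes of $p$-subgroups and $c(G) = r+1$. First I would remove the dependence on $k$: by Proposition~\ref{prop:automizers} each automizer $\Aut_G(S_i)$ with $i \ge 1$ is cyclic of order $m := [N_G(S):C_G(S)]$ dividing $p-1$, so $\Hom(\Aut_G(S_i),k^\times) = \Hom(\Aut_G(S_i),\F_p^\times)$ canonically and $\calR_{G,k} = \calR_G$ is field-independent; together with $\calE_k(G) \cong \Hom(G,k^\times)\times \calT\calE(G)$ this reduces the claim to surjectivity of the induced map $\Lambda'\colon \calT\calE'(G) \to \calR_G$ over $\F_p$, exactly as set up before Observation~\ref{obs:gens}. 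By Proposition~\ref{prop:nocoherenceforcyclics} the coherence condition is vacuous, so $\calR_G \cong \prod_{i=1}^r \Hom(\Aut_G(S_i),\F_p^\times)$ is a product of $r$ cyclic groups of order $m$, of total order $m^r$.

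Next I would pin down $\calT\calE'(G)$ and a convenient basis. Since $h$ is injective on $\calT\calE(G)$ (as $\ker(h)$ is the torsion subgroup, which meets $\calT\calE(G)$ trivially) with image $CF_b(G,p)$, and since for cyclic $S$ the Borel-Smith conditions only force all values to share a common parity, $\calT\calE'(G) \cong CF_b(G,p)/\langle \mathbf 1\rangle$ is free of rank $r$ with basis the endotrivial complexes $C_0, C_1, \dots, C_{r-1}$ realizing the step functions $h_{C_j} = 2\cdot\mathbbm 1[\,\cdot \le S_j\,]$ for $j\ge 1$ and $h_{C_0} = 2\cdot \mathbbm 1[\,\cdot = 1\,]$; these are Borel-Smith, hence realized by the classification. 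The core of the induction is the Brauer construction at $S_1$, which, being monoidal, induces a group homomorphism $\rho\colon \calE_{\F_p}(G) \to \calE_{\F_p}(\overline N)$, $C \mapsto C(S_1)$, where $\overline N := N_G(S_1)/S_1$ has cyclic Sylow $S/S_1$ of order $p^{r-1}$. Iterating the Brauer construction gives $\calH_C(S_i) = \calH_{\rho(C)}(S_i/S_1)$ for $i \ge 1$; under the identifications $\Aut_{\overline N}(S_i/S_1) \cong \Aut_G(S_i)$ this matches the top $r-1$ factors of $\calR_G$ (indices $i \ge 2$) with $\calR_{\overline N}$, and the $S_1$-factor with the $\Hom(\overline N,\F_p^\times)$-reduction of $\calH_{\rho(C)}(1)$.

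I would then finish by counting. For the top factors: the h-marks show that $\rho(C_j)$ for $1 \le j \le r-1$ carries the step-function h-marks defining the analogous basis of $\calT\calE'(\overline N)$, up to a one-dimensional torsion summand; since torsion elements of $\calE_{\F_p}(\overline N)$ have trivial $\calR_{\overline N}$-component, the $\calR_{\overline N}$-parts of $\Lambda_{\overline N}(\rho(C_j))$ equal $\Lambda'_{\overline N}$ of that basis, which by the inductive hypothesis is surjective; hence the projection of $\im(\Lambda'_G)$ to the top $r-1$ factors is onto, of order $m^{r-1}$. For the bottom factor: $\rho(C_0)$ has all h-marks zero, hence is torsion, so $\Lambda'_G(C_0)$ is supported in the $S_1$-factor; and $m\,C_0$ has the same h-marks as the truncation of a minimal periodic resolution of the trivial module, which lies in $\ker(\Lambda')$, so exactly as in Lemma~\ref{lem:orderp} the period $2m$ of $G$ forces $\Lambda'_G(C_0)$ to have order precisely $m$, i.e. to generate the $S_1$-factor. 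Thus $|\im(\Lambda'_G)| \ge m^{r-1}\cdot m = m^r = |\calR_G|$, whence $\Lambda'_G$ is surjective.

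The main obstacle is the bottom-factor step: showing $\Lambda'_G(C_0)$ genuinely generates $C_m$ rather than a proper subgroup. This rests on the claim that every element of $\ker(\Lambda')$ whose h-marks are supported only at the trivial subgroup takes an h-value at $1$ divisible by $2m$ — the precise translation into endotrivial complexes of the fact that the cohomological period of $G$ equals $2m$. A secondary but genuine bookkeeping difficulty is verifying that the iterated Brauer construction is compatible with the reduction to automizer characters, so that the $\calR_{\overline N}$-components and the $S_1$-component are correctly identified with the factors of $\calR_G$; I would isolate this as a compatibility lemma relating $\rho$ to the decomposition of Proposition~\ref{prop:furtherdecomp}.
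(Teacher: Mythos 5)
Your overall strategy --- induction on $|S|$ with Lemma \ref{lem:orderp} as the base case, splitting $\calR_G$ into a corank-one piece handled by the inductive hypothesis and a rank-one piece handled by the cohomological period $2\Phi(S)$ --- is the same as the paper's, and your reduction to $\Lambda'$ over $\F_p$ and your description of $\calR_G \cong \prod_{i=1}^r\Hom(\Aut_G(S_i),\F_p^\times)$ are correct. The gap is in the product decomposition you use and the compatibility you assume for the Brauer construction. The identification of ``the top $r-1$ factors of $\calR_G$'' with $\calR_{\overline N}$ is \emph{not} compatible with $\Lambda$: for $D \in \calT\calE(G)$, the $\calR_{\overline N}$-component of $\Lambda_{\overline N}(D(S_1))$ at $S_i/S_1$ is obtained from $\calH_D(S_i)$ only after dividing out the global character $\calH_{D(S_1)}(1) = \calH_D(S_1) \in \Hom(\overline N, \F_p^\times)$, which is factored out in $\calR_{\overline N}$ by Proposition \ref{prop:furtherdecomp} but is \emph{not} factored out of the $S_i$-components of $\calR_G$ (there one only removes $\calH_D(1)$, which is trivial). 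Concretely, $\rho(C_0)\simeq k_\omega[0]$ with $\omega = \calH_{C_0}(S_1)$, and since $N_G(S_i)$ surjects onto $\Aut_G(S_1)$ for every $i$, the components $\calH_{C_0}(S_i) = \omega|_{N_G(S_i)}$ are nontrivial for \emph{all} $i \geq 1$ whenever $\omega \neq 1$. So $\Lambda'_G(C_0)$ is not supported in the $S_1$-factor; it is a diagonal element, and likewise $\pi_{top}(\Lambda'_G(C_j))$ differs from $\Lambda'_{\overline N}(\rho(C_j))$ by the diagonal restriction of $\calH_{C_j}(S_1)$. Your final count $|\im(\Lambda'_G)|\geq m^{r-1}\cdot m$ requires an element of order $m$ in $\im(\Lambda'_G)\cap\ker(\pi_{top})$, which you have not produced.

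The repair is to choose the complement adapted to these images, which is what the paper does: after replacing $G$ by $N_G(S_1)$ (restriction induces $\calR_G = \calR_{N_G(S_1)}$ and $\calT\calE(G)\cong\calT\calE(N_G(S_1))$, so one may assume $S_1\trianglelefteq G$), decompose $\calR_G = \calR_G^{S_1}\times\Delta$, where $\calR_G^{S_1}$ is the subgroup of tuples trivial at $S_1$ and $\Delta$ is the \emph{diagonal} subgroup of tuples with $\chi_1=1$ and $\chi_{S_i}=\omega|_{N_G(S_i)}$ for all $i\geq 1$. Then $\calR_G^{S_1}$ is exactly the image of the inflation $\Inf^G_{G/S_1}\colon\calR_{G/S_1}\to\calR_G$, and inflation of endotrivial complexes (the section of your $\rho$) \emph{is} strictly compatible with $\Lambda'$, so the inductive hypothesis applied to $G/S_1$ gives $\calR_G^{S_1}\subseteq\im(\Lambda'_G)$; your periodicity argument, which is correct in spirit, then shows $\Lambda'_G(C_0)$ generates $\Delta$. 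In short, the ``bookkeeping difficulty'' you flagged as secondary is in fact the crux: the compatibility lemma you would want for $\rho$ and the factor-by-factor decomposition is false, and the correct version forces the diagonal complement.
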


    To prove the more general case, we first make a few observations. Let $G$ be a group with cyclic Sylow $p$-subgroups of order at least $p^2$. Let $S$ be a Sylow $p$-subgroup of $G$ and let $C \leq S$ be the unique subgroup of order $p$. Then for any subgroup $Q$ satisfying $C \leq Q \leq S$, we have $N_G(C) \geq N_G(Q) \geq N_G(S)$, and therefore $N_G(Q)/C_G(Q) = N_{N_G(C)}/C_{N_G(C)}(Q)$, and restriction induces an equality $\calR_G = \calR_{N_G(C)}$. Similarly, restriction induces an isomorphism $\calT\calE(G) \cong \calT\calE(N_G(C))$ (this holds for any group containing $S$). We have a commutative diagram as follows.

    \begin{figure}[H]
        \centering
        \begin{tikzcd}
            \calT\calE(G)' \ar[d, "\Lambda_G'"] \ar[r, "\cong"] & \calT\calE(N_G(C))' \ar[d, "\Lambda_{N_G(C)}'"]\\
            \calR_G  \ar[r, "\cong"]& \calR_{N_G(C)}
        \end{tikzcd}
    \end{figure}

    Therefore, $\Lambda_G$ is surjective if and only if $\Lambda_{N_G(C)}$ is surjective, and we can reduce to the case where $G$ has a normal cyclic subgroup $C$ of order $p$. Indeed, $G$ is balanced if $N_G(C)$ is balanced. Moreover, if $N_G(C)$ is balanced, then $N_G(C)/C$ is balanced.

    \begin{proof}[Proof (of Theorem \ref{thm:cyclicsurj})]
        We first show that if $G$ is balanced, then $\Lambda$ is surjective. It suffices to show that $\Lambda': \calT\calE(G) \to \calR_G = \calR_{G, k}$ is surjective, and we may assume $G$ has a normal cyclic subgroup $C$ of order $p$. We induct on the order of the Sylow $p$-subgroup $S \geq C$. The case $|S| = 1$ is trivial and the case $|S| = p$ is Lemma \ref{lem:orderp}.

        Under the assumption that $C$, observe that inflation induces a split injective group homomorphism $\calT\calE(G/C) \to \calT\calE(G)$ with retraction induced by the Brauer construction. This affords a decomposition \[\calT\calE(G) = \Inf^G_{G/C}\calT\calE(G/C) \times \partial\calT\calE(G),\] where $\partial\calT\calE(G)$ is generated by the unique ``faithful'' endotrivial complex $C$ with $h_C(1) = 2$ and $h_C(P) = 0$ for all nontrivial $p$-subgroups $P$ of $G$. We remark that this is a misnomer with regards to the language of \cite{M24a}, as for $C$ to be faithful, we would require $C(P) \cong k[0]$ for all $P$, where in this case we have $C(P) \cong k_\omega[0]$ for some $\omega \in \Hom(N_G(P)/P, \F_p^\times)$. However, it is easy to see that there is a linear character $\omega\in \Hom(G, \F_p^\times)$ such that $C\otimes_k k_\omega[0]$ is faithful in the sense of \cite{M24a}.

        Now, we characterize the images of $\Lambda'$ restricted to $\calT\calE(G/C)$ and $\partial\calT\calE(G)$. By induction, $\Lambda': \calT\calE(G/C)' \to \calR_{G/C}$ is surjective, since the assumption on $G$ holds for $G/C$ as well. We have an inflation map $\Inf^G_{G/C}: \calR_{G/C} \to \calR_G$. We claim its image is precisely the subgroup $\calR^C_G$ of $\calR_G$ given by tuples $(\chi_P)_P \in s_p(G)$ for which $\chi_C$ is trivial. It is easy to see the image of inflation is contained in $\calR_G^C$. On the other hand, for any $p$-subgroup of $G$ containing $C$, inflation sends a homomorphism $\chi_{P/C} \in \Hom(N_{G/C}(P/C)/C_{G/C}(P/C), k^\times)$ to $\chi_P \in \Hom(N_{G}(P)/C_G(P), k^\times)$. This map is an isomorphism since $\Aut_G(P)\cong \Aut_{G/C}(P/C)$, therefore $\Hom(N_{G/C}(P/C), k^\times) \cong \Hom(N_{G}(P), k^\times)$ and inflation is injective, hence surjective too. By induction, we have a surjective homomorphism \[\calT\calE(G/C) \xrightarrow{\calR_{G/C}}  \calR_{G}^C.\]

        Now observe that the $\calR^C_G$ has a complement $\Delta_C\calR_G \leq\calR_G$, which consists of all coherent character tuples $(\chi_P)_{P \in s_p(G)}$ for which $\chi_1 = 1$, $\chi_C = \omega$ for some $\omega \in \Hom(N_G(C)/C_G(C), k^\times)$, and for all $P\geq C$, $\chi_P = \omega|_{N_G(P)}$. It follows that $\calR^C_G \cong \Hom(N_G(C)/C_G(C),k^\times)$, hence $\Delta_C\calR_G$ has order $[N_G(C):C_G(C)] = [N_G(S):C_G(S)]$ by Proposition \ref{prop:automizers}. On the other hand, we have $\partial\calT\calE(G) = \langle C\rangle$ for an endotrivial complex $C$ with projectives in degrees 1 and 2, and the $k$-dimension one module $k_\omega$ for some $\omega \in \Hom(G, k^\times)$ in degree 0. Observe the image $\Lambda(C)$ is a tuple $(\chi_P)_{P \in s_p(G)}$ for which $\chi_1 = 1$, $\chi_C = \omega$, and for all $P\geq C$, $\chi_P = \omega|_{N_G(P)}$. Since the periodicity of $G$ is $n := 2\Phi(S) = 2[N_G(S):C_G(S)] $, as $C^{\otimes \Phi(S)}$ is a minimal length truncated periodic resolution of $k_\omega$ with homology again $k_\omega$, the image $\Lambda(C)$ in fact generates $\Delta_C\calR_G$, as desired.

    \end{proof}

    \subsection{Groups of $p$-rank at least two}

    On the other hand, there exist groups $G$ with small Sylow $p$-subgroups with $p$ odd for which, $\Lambda$ is not surjective. In fact, such groups need not have particularly complex Sylow $p$-subgroups, as the next example demonstrates.

    \begin{prop}\label{prop:prk2}
        Let $p$ be an odd prime and $k/\F_p$ be a field extension. Then there exists a finite group $G$ with normal Sylow $p$-subgroup $S$ elementary abelian of rank two such that $\Lambda: \calE_k(G) \to O(T(kG))$ is not surjective.
    \end{prop}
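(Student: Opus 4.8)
The plan is to exhibit a single concrete family of groups and verify the numerical criterion of Observation~\ref{obs:gens}. Fix the odd prime $p$ and take $G := \F_{p^2} \rtimes \F_{p^2}^\times$, the one-dimensional affine group over $\F_{p^2}$, where $\F_{p^2}^\times$ acts on the additive group $\F_{p^2}$ by multiplication. Then $S := (\F_{p^2}, +) \cong C_p \times C_p$ is a normal (hence unique) Sylow $p$-subgroup, elementary abelian of rank two, and $G/S \cong \F_{p^2}^\times \cong C_{p^2-1}$ is a $p'$-group acting as a Singer cycle (a non-split maximal torus in $\GL_2(\F_p)$): a generator acts with irreducible characteristic polynomial, so it has no nonzero fixed vectors and fixes no $\F_p$-line. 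Since $S$ is the only Sylow $p$-subgroup, $s_p(G)$ consists of $1$, the $p+1$ one-dimensional $\F_p$-subspaces (lines) of $S$, and $S$ itself. The strategy is to show $c(G) = 2$ while $\calR_G$ requires at least $2$ generators, and then invoke Observation~\ref{obs:gens}.

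First I would compute $c(G)$. The only cyclic $p$-subgroups are $1$ and the $p+1$ lines; since $\F_{p^2}^\times$ acts transitively on the nonzero vectors of $\F_{p^2}$, it acts transitively on the lines, so they form one conjugacy class and $c(G) = 2$. Next I would compute the automizers appearing in $\calR_G$. As $S$ is abelian we have $PC_G(P) = C_G(P)$ for every $p$-subgroup $P$, so $N_G(P)/PC_G(P) = \Aut_G(P)$ by the remark following Proposition~\ref{prop:furtherdecomp}. One checks directly from the group law that $C_G(S) = S$, giving $\Aut_G(S) \cong \F_{p^2}^\times \cong C_{p^2-1}$, and that for any line $L$ the stabilizer of $L$ in $\F_{p^2}^\times$ is exactly the scalars $\F_p^\times$, with $C_G(L) = S$, giving $\Aut_G(L) \cong \F_p^\times \cong C_{p-1}$.

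The crucial step is to show that the coherence condition of Proposition~\ref{prop:furtherdecomp} imposes no relations, so that $\calR_G = \calR_{G,\F_p}$ is the full $G$-stable product $\Hom(\Aut_G(L), \F_p^\times) \times \Hom(\Aut_G(S), \F_p^\times)$. A nontrivial instance of the condition would relate $\varphi_L$ to $\varphi_S$ through some $x \in N_G(L)$ with $x_p \notin L$, i.e.\ with $L\langle x_p\rangle = S$. Writing $x = x_p x_{p'}$, the commuting $p'$-part $x_{p'}$ must centralize the nonzero vector $x_p \in S$; but every nonidentity element of $\F_{p^2}^\times$ acts on $S$ without nonzero fixed vectors, forcing $x_{p'} = 1$ and hence $x = x_p \in S \leq C_G(L) \cap C_G(S)$. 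Thus both sides of the coherence relation evaluate the characters at the identity coset, and the relation is vacuous. Consequently $\calR_{G,\F_p} \cong \Hom(C_{p-1}, \F_p^\times) \times \Hom(C_{p^2-1}, \F_p^\times) \cong C_{p-1} \times C_{p-1}$, using $\gcd(p^2-1, p-1) = p-1$.

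Finally, since $p \geq 3$ there is a prime $q \mid p-1$, and the $q$-rank of $C_{p-1} \times C_{p-1}$ is $2$, so its minimal number of generators equals $2 = c(G)$. Observation~\ref{obs:gens} (which reads off non-surjectivity over an arbitrary $k$ from the generator count of $\calR_{G,\F_p}$) then yields that $\Lambda \colon \calE_k(G) \to O(T(kG))$ is not surjective for every field extension $k/\F_p$. I expect the main obstacle to be the coherence-vanishing step: one must translate the abstract condition of Proposition~\ref{prop:furtherdecomp} into the statement that a $p'$-element tying $\varphi_L$ to $\varphi_S$ would have to fix a nonzero vector of $S$, and verify carefully, via the semidirect-product multiplication, that the $p$- and $p'$-parts of an arbitrary $x \in N_G(L)$ interact as claimed. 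The remaining arithmetic (the automizers, the value of $c(G)$, and the generator count) is routine.
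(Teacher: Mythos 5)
Your proof is correct, but it takes a genuinely different route from the paper's. The paper works with $G = N_{S_{2p}}(S) \cong (C_p \rtimes C_{p-1})^2 \rtimes C_2$, where the Sylow has \emph{three} $G$-classes of cyclic subgroups ($1$, the fused pair $\{A,B\}$, and the diagonals), so $c(G)=3$ and the whole effort goes into exhibiting a \emph{third} generator of $\calR_G$ --- the sign character on $N_G(S)/SC_G(S)$ detecting the swap of $A$ and $B$ --- and running a case-by-case centralizer analysis over every cyclic subgroup of $S$ to check that no coherence condition forces it to be trivial. Your affine group $\F_{p^2}\rtimes\F_{p^2}^\times$ trades this for a smaller target: the Singer torus fuses all lines, so $c(G)=2$, and the two generators come for free from $\Hom(\Aut_G(L),\F_p^\times)\cong C_{p-1}$ and $\Hom(\Aut_G(S),\F_p^\times)\cong\Hom(C_{p^2-1},\F_p^\times)\cong C_{p-1}$; your automizer computations and the bound $d(C_{p-1}\times C_{p-1})=2=c(G)$ are all correct, and Observation~\ref{obs:gens} applies exactly as you say. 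Your coherence argument is also cleaner --- fixed-point-freeness of $\F_{p^2}^\times$ on $S\setminus\{0\}$ forces $x_{p'}=1$ whenever $x_p\neq 1$, hence $x=x_p\in S\leq C_G(L)\cap C_G(S)$ --- though you should note explicitly that the same argument also disposes of the coherence instances coming from $P=1$ (where $\varphi_1$ is trivial and the condition reads $\varphi_{\langle x_p\rangle}(x\cdots)=1$), not only the $L$-to-$S$ ones; this is the identical computation, so it is an omitted case rather than a gap. What the paper's heavier example buys is reusability: $N_{S_{2p}}(S)$ is precisely the building block embedded into $(C_p\rtimes C_{p-1})^n\rtimes S_n$ in the subsequent theorem on $p$-rank $n$, whereas your example, while slicker for the rank-two statement, would require a separate analysis to feed that generalization.
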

    \begin{proof}
        For $p>2$, the symmetric group $S_{2p}$ has elementary abelian Sylow $p$-subgroups, since the $p$-part of $(2p)!$ is $p^2$. We choose our Sylow to be $S := \langle (1, 2, \dots, p) ,(p+1, p+2, \dots, 2p)\rangle$. Set $a =  (1,2,\dots, p)$, $b = (p+1, p+2,  \dots, 2p)$, $A := \langle a\rangle $ and $B := \langle b\rangle $. We fix $G:= N_{S_{2p}}(S)$. We have that $G \cong (S \rtimes (C_{p-1}\times C_{p-1}))\rtimes C_2$, where for instance we may choose $C_2 = \langle(1, p+1) (2, p+2) \dots(p, 2p)\rangle$, although this choice is not unique. Generators of the copies of $C_{p-1}$ corresponding to $\Aut(A)$ and $\Aut(B)$ depend on $p$ on the other hand. Then $A$ and $B$ are $G$-conjugate and all other cyclic subgroups of $S$ are $G$-conjugate. We have $N_G(A) = N_G(B) \cong S \rtimes (\Aut(A) \times \Aut(B)) $, and if $C$ denotes any of the other $p-1$ cyclic subgroups of $S$ (i.e. $\langle ab\rangle, \langle ab^2 \rangle,\dots$) then $N_G(C) \cong (S \rtimes \Aut(C)) \rtimes C_2$, where $C_2$ is generated by an involution $\sigma$ satisfying ${}^\sigma a = b^n$, where $C = \langle ab^n\rangle$. Note $N_G(A) \trianglelefteq G$ with $G/N_G(A) \cong C_2$.

        We claim that $\calR_G$ has at least $c(G) = 3$ generators, which will imply by Observation \ref{obs:gens} that $\Lambda: \calE_k(G) \to O(T(kG))$ is not surjective. First, it is clear that $\calR_G$ has at least two generators, corresponding to generators for $\Hom(N_G(A)/C_G(A), k^\times)$ and $\Hom(N_G(C)/C_G(C),k^\times)$. Indeed, straightforward computations verify that these groups are nontrivial, with  both $N_G(A)/C_G(A) \cong C_{p-1}$ and $N_G(C)/C_G(C) \cong C_{p-1}$, therefore $\Hom(N_G(A)/C_G(A), k^\times) \cong C_{p-1}$ and $\Hom(N_G(C)/C_G(C),k^\times) \cong C_{p-1}$. Note there are no nontrivial coherence conditions implied from the trivial subgroup hom-set $\Hom(N_G(1)/C_G(1), k^\times)$.

        Note the following homomorphism is well-defined with kernel containing $C_G(S)$: \[\chi_S: G \to k^\times,\, x \mapsto \begin{cases} -1 & x \not\in N_G(A) \\ 1 & x \in N_G(A)\end{cases}\] It follows that the pair $(\chi_P)_{P \in s_p(G)}$ with $\chi_P \equiv 1$ for $P \neq S$ and $\chi_S$ as above is $G$-stable and satisfies the coherence conditions for all cyclic subgroups of $S$. Therefore, it remains to show that $G \setminus N_G(A)$ is not determined by coherence conditions, and we do this by explicitly determining the coherence conditions for $G$ arising from each cyclic subgroup of $S$. For $A$ and $B$, $N_G(A) = N_G(B) = S \rtimes (\Aut(A) \times \Aut(B))$, so any coherence conditions corresponding to $A$ or $B$ only affect $N_G(A)$. Let $C := \langle ab^n\rangle$ for $n \in \{1, \dots, p-1\}$, then $N_G(C) \cong (S \rtimes \Aut(C)) \rtimes C_2$, where $C_2$ is generated by an involution $\sigma$ satisfying ${}^\sigma a = b^n$.

        The coherence condition states that for $x = x_p x_{p'}\in N_G(C)$, we have an equality $\chi_C(xC) = \chi_{P\langle x_p\rangle}(x P\langle x_p \rangle)$. If $x_p \in C$, then the coherence condition is therefore trivial. On the other hand, if $x_p \in S\setminus C$, we know $x_{p'} \in C_G(x_p)$ by definition) and $x_{p'} \in N_G(C)$ since $x, x_p \in N_G(C)$ as well. Thus, $x_{p'} \in C_{N_G(C)}(x_p)$.

        We claim that when $x_p \in S\setminus C$, then $C_{N_G(C)}(x_p) = S$. Indeed, if $x_p \in A$ or $x_p \in B$, then $C_G(x_p) \cong S \rtimes C_{p-1}$ with $C_{p-1} = \Aut(B)$ or $C_{p-1} = \Aut(A)$ respectively, and no nontrivial element of $\Aut(A)$ or $\Aut(B)$ centralizes $C$. Otherwise, if $x_p \in \langle ab^m\rangle$ for some $m \in \{1, \dots, p-1\}$ and $m \neq n$, then, $C_G(x_p) \cong S \rtimes C_2$, where $C_2$ is generated by an involution $\sigma$ satisfying ${}^\sigma a = b^m$. However, because $C = \langle a b^n \rangle$, it follows that $\sigma$ does not normalize $C$, thus in all cases we have $C_{N_G(C)}(x_p) = S$.

        Therefore, for any $x = x_px_{p'} \in N_G(C)$, if $C\langle x_p\rangle = S$, then $x_{p'} = 1$, since $x_{p'} \in C_{N_G(C)}(x_p)$ and is a $p'$ element. Thus, the coherence condition implies no new information. As we have considered every cyclic subgroup of $S$, we conclude that the coherence conditions do not imply anything about $x \not\in N_G(A)$. Therefore, $\calR_G$ has at least three generators, as desired, and $\Lambda$ is not surjective.
    \end{proof}

    The previous example generalizes to groups with arbitrary large elementary abelian Sylow $p$-subgroups.

    \begin{theorem}
        Let $p$ be an odd prime, $k$ a field of characteristic $p$, and $n \geq 2$ an integer. Then there exists a finite group $G$ with $p$-rank $n$ such that $\Lambda: \calE_k(G) \to O(T(kG))$ is not surjective.
    \end{theorem}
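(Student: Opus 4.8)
The plan is to generalize the construction of Proposition \ref{prop:prk2} from rank two to arbitrary rank $n$ by taking a suitable subgroup of the symmetric group $S_{np}$ whose Sylow $p$-subgroup is elementary abelian of rank $n$. First I would set $S := \langle a_1, \dots, a_n\rangle$ where $a_i$ is the $p$-cycle acting on the $i$-th block $\{(i-1)p+1, \dots, ip\}$, so that $S \cong (C_p)^n$ is a Sylow $p$-subgroup of $S_{np}$ (since the $p$-part of $(np)!$ is $p^n$ for $n < p$, and for larger $n$ one can instead work inside the relevant wreath-product normalizer). I would then take $G := N_{S_{np}}(S)$, which has the form $(S \rtimes \Aut(S)_{\mathrm{mon}}) \rtimes \Sigma$, where the first automizer piece permutes within blocks via $(C_{p-1})^n$ and $\Sigma \cong S_n$ permutes the $n$ blocks. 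The key structural facts I need are a count of the $G$-conjugacy classes of cyclic $p$-subgroups of $S$, i.e. $c(G)$, and a lower bound on the minimal number of generators of the reduced coherent character tuple group $\calR_G$.

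The strategy is then identical to the rank-two case: invoke Observation \ref{obs:gens}, which says that if $\calR_G$ requires at least $c(G)$ generators then $\Lambda$ cannot be surjective. So the two quantities I must compare are $c(G)$ and the minimal generating number of $\calR_G$. For $c(G)$ I would count $G$-orbits of lines (rank-one subspaces) in $\F_p^n$ under the action of $\Aut_G(S) = (C_{p-1})^n \rtimes S_n$ (the monomial group), which acts on the $(p^n-1)/(p-1)$ lines; this orbit count is a concrete combinatorial number depending on $n$ and $p$. For the lower bound on generators of $\calR_G$, I would argue as in Proposition \ref{prop:prk2} that each $G$-orbit of lines contributes a free $\Hom(\Aut_G(C), k^\times) \cong C_{p-1}$ factor to $\calR_G$ (with all such automizers nontrivial, by the analogue of the automizer computation there), and crucially that the coherence conditions impose no relations among these factors except at the top subgroup $S$. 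This last point, that coherence is vacuous below $S$, follows because each $a_i$ and each non-axis generator has centralizer in $N_G(C)$ equal to $S$ for the relevant $C$, exactly as in the proof of Proposition \ref{prop:prk2}.

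The main obstacle, and the place where the rank-$n$ argument genuinely goes beyond the rank-two case, is the bookkeeping at the top subgroup $S$: one must show that the character $\chi_S$ component of $\calR_G$ contributes strictly more generators than are ``consumed'' by identifying lines into $G$-orbits, so that the total minimal generating number of $\calR_G$ exceeds $c(G)$. In the rank-two case this was the single extra generator coming from the sign character $\chi_S$ detecting $G \setminus N_G(A)$; in higher rank I expect the component at $S$ to contribute a factor governed by $\Hom(\Aut_G(S), k^\times) \cong C_{p-1}$ together with the freedom of independently choosing characters on each line-orbit, giving an excess that grows with $n$. The cleanest route is probably to exhibit explicitly, for each $n$, a family of $G$-stable coherent tuples that are linearly independent modulo the image of the (at most) $c(G)-1$ generators available from $\calT\calE'(G)$, thereby forcing non-surjectivity; equivalently, one computes $\dim_{\F_p}$ (after tensoring the relevant torsion groups) of $\calR_G$ and of the image of $\Lambda'$ and checks the former is strictly larger. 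I would carry this out by first handling the concrete case $k = \F_p$ (where $\Hom(-, k^\times) \cong \Hom(-, C_{p-1})$ is cleanest) and then noting the general field case follows since surjectivity over $\F_p$ is the governing condition, as established in the discussion preceding Observation \ref{obs:gens}.
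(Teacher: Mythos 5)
You have the right group ($G=(C_p\rtimes C_{p-1})^n\rtimes S_n=N_{S_{np}}(C_p^n)$, which is exactly the group the paper uses) and the right overall strategy via Observation \ref{obs:gens}, but the way you propose to produce the extra generator of $\calR_G$ contains both an error and a gap at the decisive step. The error: your claim that coherence is vacuous below $S$ ``because each $a_i$ and each non-axis generator has centralizer in $N_G(C)$ equal to $S$'' fails once $n\geq 3$. For example, with $C=\langle a_1a_2\rangle$ and $x_p=a_3$, the centralizer $C_{N_G(C)}(x_p)$ contains the diagonal copy of $C_{p-1}$ on blocks $1,2$, the transposition swapping blocks $1$ and $2$, and everything supported on blocks $4,\dots,n$, so it properly contains $S$ and the coherence condition tying $\chi_C$ to $\chi_{C\langle a_3\rangle}$ is genuinely non-vacuous. (Even in Proposition \ref{prop:prk2} the centralizer-equals-$S$ claim is made only for the diagonal lines, not for $A$ and $B$.) The gap: the step you yourself call ``the main obstacle'' --- showing that the component at the top subgroup $S$ contributes an excess generator surviving all coherence conditions --- is the entire content of the theorem and is not carried out; note also that $\Aut_G(S)\cong(C_{p-1})^n\rtimes S_n$ has abelianization $C_{p-1}\times C_2$, not the $C_{p-1}$ you state, and the $C_2$ factor (the sign character of $S_n$) is precisely the one you would need.

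The paper sidesteps the top subgroup entirely: it takes the rank-two subgroup $P=\langle(1,\dots,p),(p+1,\dots,2p)\rangle$ and $H=N_{S_{2p}}(P)$ from Proposition \ref{prop:prk2}, and observes that $N_G(P)/C_G(P)\cong N_H(P)/C_H(P)$ and that $N_G(C)=N_H(C)C_G(C)$ for every $C\leq P$, so the coherence conditions affecting the $P$-component of a tuple in $\calR_G$ are exactly those already analyzed in $\calR_H$. The extra sign character at $P$ produced in Proposition \ref{prop:prk2} therefore survives in $\calR_G$, giving at least $n+1$ generators against the $n$ conjugacy classes of order-$p$ subgroups plus the trivial one. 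If you want to rescue your route through $S$, you would have to redo the full centralizer and coherence analysis at rank $n$; the reduction to a rank-two subgroup is both shorter and already proved.
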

    \begin{proof}
        We set $G := (C_p \rtimes C_{p-1})^n \rtimes S_n$, where each copy of $C_{p-1}$ acts faithfully on the corresponding copy of $C_p$ and $S_n$ permutes the copies of $C_p \rtimes C_{p-1}$ in the obvious way. We regard $G$ as a subgroup of $S_{np}$, with the copies of $C_p$ generated by $(1,2,\dots, p), \dots, ((n-1)p + 1, \dots, np)$. First, it is clear that we have a $p$-subgroup (not necessarily Sylow) $E \cong C_p^n$, and straightforward computation shows that for every cyclic subgroup $C$ of order $p$ of $E$, we have $N_G(C)/C_G(C) \cong C_{p-1}$. There are $n$ conjugacy classes of cyclic subgroups of order $p$ in $E$, corresponding to cycle type via the embedding $G \hookrightarrow S_{np}$. Therefore, to show $\Lambda$ is not surjective for $G$, it suffices to show that that $\calR_G$ has at least one more generator.

        Let $ P:= \langle (1,\dots,p), (p+1, \dots, 2p)\rangle \leq S_{np}$, and set $H := N_{S_{2p}}(P)$. That is, $H$ is isomorphic to the group considered in Theorem \ref{prop:prk2}. Explicitly, $H \cong (C_p \times C_{p-1})^2 \rtimes C_2$, and $P$ the unique Sylow $p$-subgroup of $H$. First, observe we have an isomorphism $N_G(P)/C_G(P) \cong N_H(P)/C_H(P)$, hence the hom-sets associated to $P$ for $\calR_G$ and $\calR_H$ coincide. Moreover, for any subgroup $C $ of $ P$, the values of any $\chi_C \in \Hom(N_G(C)/C_G(C), k^\times)$ are determined entirely on values in $N_H(C)$, since $N_G(C) = N_H(C)C_G(C)$. Therefore, any coherence conditions for the tuple $\Hom(N_G(P)/C_G(P),k^\times)$ in $\calR_G$ arise from coherence conditions in $\calR_H$. It follows from Proposition \ref{prop:prk2} that $\calR_G$ has another generator, corresponding to the following $\chi_P \in \Hom(N_G(P)/C_G(P),k^\times)$: \[\begin{cases}
            \chi_P(x) = -1 & x \in N_G(P) \setminus N_G(A)\\ 1 & \text{otherwise,}
        \end{cases}\] where $A$ denotes the cyclic subgroup generated by $(1,2,\dots, p)$ under the inclusion $G \hookrightarrow S_{np}$.
    \end{proof}

    We expect that for ``most'' finite groups $G$ with $p$-rank at least 2, the Lefschetz homomorphism is not surjective.

    \section{Towards the kernel of the Lefschetz homomorphism} \label{sec:kernel}

    We conclude with a few results towards deducing the kernel of the Lefschetz homomorphism. The following proposition was partially observed by Bachmann \cite[5.50(iv)]{Bac16} in the context of invertible Artin motives and with restricted hypotheses. We refer the reader to \cite{BG23} for an explanation of the correspondence between Artin motives and the bounded homotopy category of $p$-permutation modules (and the corresponding ``big" category, the derived category of permutation modules).

    \begin{theorem}\label{thm:kerforcyclics}
        Assume $p$ is an odd prime. Let $G$ be a finite group with a cyclic Sylow $p$-subgroup $S$, and $C \in \calT\calE(G)$ be an endotrivial complex with trivial homology. We have that $C \in \ker(\Lambda)$ if and only if $h_C$ admits only even values and for all nontrivial $p$-subgroups $P$ of $S$, the congruence $h_C(1) \equiv h_C(P) \mod 2\Phi(S)$ holds.
    \end{theorem}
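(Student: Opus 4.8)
The plan is to compute both components of $\Lambda(C)$ under the decomposition of \Cref{thm:etrivtodecomp}, namely $\Lambda([C]) = \big(\dim(h_C), \calH(C)\big) \in (B(S)^G)^\times \times \calR_G$, where the $\Hom(G,k^\times)$-factor vanishes because $C \in \calT\calE(G)$ forces $\calH_C(1) = 1$. Since $\dim(h_C)(H) = (-1)^{h_C(H)}$, the first component is trivial exactly when every $h_C$-value is even, which accounts for the parity hypothesis. It therefore remains to translate the vanishing of the second component $\calH(C) \in \calR_G$ into the asserted congruences, and the heart of the argument is a formula expressing $\calH_C$ at each nontrivial $p$-subgroup in terms of $h$-marks.

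Writing $P_0 = 1 < P_1 < \cdots < P_n = S$ for the chain of $p$-subgroups of the cyclic Sylow $S$, I claim that for every $C \in \calT\calE(G)$ and every $i \geq 1$, the image of $\calH_C(P_i)$ in $\Hom(\Aut_G(P_i),k^\times) \cong C_{\Phi(S)}$ (using $P_iC_G(P_i) = C_G(P_i)$ and \Cref{prop:automizers}) corresponds, relative to a fixed generator, to $\tfrac12\big(h_C(1) - h_C(P_i)\big) \bmod \Phi(S)$. This quantity is a genuine integer, since Borel-Smith condition (a) gives $h_C(P_{j+1}) \equiv h_C(P_j) \bmod 2$ along the chain, so all $h_C(P_j)$ share the parity of $h_C(1)$. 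By \Cref{prop:nocoherenceforcyclics} there are no coherence constraints and $\calR_G \cong \prod_{i=1}^n \Hom(\Aut_G(P_i),k^\times)$; granting the formula, $\calH(C) = 1$ is then equivalent to $\Phi(S) \mid \tfrac12\big(h_C(1) - h_C(P_i)\big)$ for all $i$, i.e.\ $h_C(1) \equiv h_C(P_i) \bmod 2\Phi(S)$, which is precisely the claimed congruence. Combining this with the parity analysis yields both directions of the theorem.

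To prove the key formula I first reduce, as in the proof of \Cref{thm:cyclicsurj}, to the case $Z := P_1 \trianglelefteq G$ by replacing $G$ with $N_G(Z)$: restriction is an isomorphism $\calT\calE(G) \cong \calT\calE(N_G(Z))$ preserving $h$-marks, $\calR_G = \calR_{N_G(Z)}$, and $\Phi(S)$ is unchanged. I then induct on $|S|$ via the decomposition $\calT\calE(G) = \Inf^G_{G/Z}\calT\calE(G/Z) \times \partial\calT\calE(G)$, where $\partial\calT\calE(G) = \langle \mathcal{C}\rangle$ for the faithful complex $\mathcal{C}$ with $h_{\mathcal{C}}(1) = 2$ and $h_{\mathcal{C}}(P) = 0$ otherwise. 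Since $h$ and $\calH$ are homomorphisms it suffices to verify the formula on each factor. For the base case $|S| = p$, the proof of \Cref{lem:orderp} together with the periodicity $2\Phi(S)$ shows that $\calH_{\mathcal{C}}(S)$ has exact order $\Phi(S)$, hence is a generator, matching $\tfrac12(2-0) = 1$; more generally $\Lambda(\mathcal{C})$ generates $\Delta_Z\calR_G$ with $\calH_{\mathcal{C}}(P_i)$ the chosen generator for each $i \geq 1$. For an inflated complex $D = \Inf^G_{G/Z}\bar D$ one has $h_D(1) = h_D(Z)$ and $\calH_D(Z) = 1$, while for $i \geq 2$ the identifications $\Aut_G(P_i) \cong \Aut_{G/Z}(P_i/Z)$ and $\Phi(S/Z) = \Phi(S)$ (from \Cref{prop:automizers} and \Cref{rmk:cohomologystuff}) match $\calH_D(P_i)$ with $\calH_{\bar D}(P_i/Z)$, whence the inductive hypothesis gives $\tfrac12\big(h_{\bar D}(1) - h_{\bar D}(P_i/Z)\big) = \tfrac12\big(h_D(1) - h_D(P_i)\big)$, as required.

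The main obstacle is the bookkeeping in the inductive step: one must check that the automizer identifications $\Aut_G(P_i) \cong \Aut_{G/Z}(P_i/Z)$ are simultaneously compatible with the chosen generators across all $i$ (so that "the generator'' is unambiguous), and that the index shift $P_i \leftrightarrow P_i/Z$ together with the equality $h_{\Inf\bar D}(1) = h_{\Inf\bar D}(Z)$ makes $\tfrac12\big(h_D(1) - h_D(P_i)\big)$ agree on the nose with the quantity produced by the inductive hypothesis for $G/Z$. Pinning down the base case — that the faithful complex realizes an honest generator of $C_{\Phi(S)}$ rather than a proper power — is where the period $2\Phi(S)$ enters essentially, and this should be quoted carefully from the proofs of \Cref{lem:orderp} and \Cref{thm:cyclicsurj}.
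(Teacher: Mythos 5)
Your proposal is correct and follows essentially the same route as the paper: both reduce to $N_G(Z)$ for $Z$ the subgroup of order $p$, induct on $|S|$ via the decomposition $\calT\calE(G) = \Inf^G_{G/Z}\calT\calE(G/Z) \times \partial\calT\calE(G)$, and use the period $2\Phi(S)$ to show the faithful complex maps to an element of exact order $\Phi(S)$ in $\calR_G$; the paper merely phrases the conclusion as $\Phi(S)$-divisibility of the coefficients in the basis $\{C_P\}$ rather than as your exponent formula. The generator-compatibility issue you flag is harmless: the theorem only needs the ``matrix'' $\big(\calH_{C_{P_j}}(P_i)\big)$ to be triangular with a generator in each diagonal position (so the resulting congruence system forces $\Phi(S)\mid a_{P_j}$ for all proper $P_j$), not that one coherent generator serves every $i$, and that weaker statement is exactly what your induction produces.
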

    \begin{proof}
        We rely on Theorem \ref{thm:etrivtodecomp} and the description of $\calR_G$ determined via Proposition \ref{prop:nocoherenceforcyclics} and Proposition \ref{prop:automizers}. For each proper subgroup $P$ of $S$, let $C_P$ denote the endotrivial complex with trivial homology and $h_{C_P}(Q) = 2$ for all $Q < P$ and $h_{C_P}(Q) = 0$ otherwise. Set $C_S = k[1]$. Then $\{C_P\}_{1 \leq P \leq S}$ is a $\Z$-basis of $\calT\calE(G)$, and we may write $[C] = a_1[C_1] + \dots + a_S[C_S] \in \calT\calE(G)$. A routine computation using the h-mark functions associated to each $C_P$ shows that the congruence $h_C(1) \equiv h_C(P) \mod 2\Phi(S)$ holds for all nontrivial $p$-subgroups $P$ of $S$ if and only if $a_Q$ is a multiple of $\Phi(S)$ for all proper subgroups $Q$ of $S$.

        First, suppose $h_C$ admits only even values and for all nontrivial $p$-subgroups $P$ of $S$, the congruence $h_1(C) \equiv h_P(C) \mod 2\Phi(S)$ holds. Then equivalently, under the decomposition $[C] = a_1[C_1] + \dots + a_S[C_S] \in \calT\calE(G)$, $a_Q$ is a multiple of $\Phi(S)$ for all proper subgroups $Q$ of $S$. Since $\calR_G$ is isomorphic to a direct product of finite groups by Proposition \ref{prop:nocoherenceforcyclics}, each of order $\Phi(S)$ by Proposition \ref{prop:automizers}, the image of $C$ is trivial in $\calR_G$. Likewise, since $h_C$ admits only even values, the image of $C$ is trivial in $(B(S)^G)^\times$ as well, hence $C \in \ker(\Lambda)$.

        Conversely, suppose $C \in \ker(\Lambda)$. It is easy to see that necessarily $h_C$ must be even-valued. We induct on the order of $S$, in a similar manner to Theorem \ref{thm:cyclicsurj}. Again, it follows that $C \in \ker(\Lambda)$ if and only if $\Res^G_{N_G(H)} \in \ker(\Lambda)$, where $H$ denotes a cyclic $p$-subgroup of order $p$, since $N_G(H) \geq N_G(P)$ for any other $p$-subgroup $P \geq H$. Therefore, we may assume without loss of generality that $H$ is normal. From the proof of Theorem \ref{thm:cyclicsurj}, we have a decomposition $\calT\calE(G) = \Inf^G_{G/H}\calT\calE(G/H) \times \partial\calT\calE(G)$, with image in $\calR_G$ that intersects trivially. Let $[C] = a_1[C_1] + \dots + a_S[C_S] \in \calT\calE(G)$. We have that $C_1 \in \partial\calT\calE(G)$ and $C_P \in \Inf^G_{G/H}\calT\calE(G/H)$ for all nontrivial subgroups $P$ of $S$. Since the images of $\Inf^G_C\calT\calE(G/C)$ and $\partial\calT\calE(G)$ intersect trivially, both $a_1[C_1]$ and $a_H[C_H]+ \cdots + a_S[C_S]$ belong to $\ker(\Lambda)$. By induction, it follows that $a_P$ is a multiple of $\Phi(S)$ for all nontrivial $P$.

        On the other hand, by \cite[Lemma 5.8]{M24a}, it follows that $C_1^{\otimes a_1}$ is homotopy equivalent to an indecomposable chain complex contains only one non-projective indecomposable module $M$ of $k$-dimension one. By the Green correspondence, $\Res^G_{N_G(S)} M$ is the trivial module if and only if $M$ is the trivial module. Since the image of $C_1^{\otimes a_1}$ in $\calR_G$ is necessarily trivial, $\Res^G_{N_G(S)}(M)$ is trivial, hence $M$ is trivial. Therefore $C_1^{\otimes a_1}$ is homotopy equivalent to a truncated projective resolution of the trivial $kG$-module, hence it has length a multiple of the periodicity of $G$, which is $2\Phi(S)$. Thus, $a_P$ is a multiple of $\Phi(S)$ for all proper subgroups $P$ of $S$, so $h_1(C) \equiv h_Q(C) \mod 2\Phi(S)$ holds for all nontrivial $p$-subgroups $Q$ of $S$, as desired.
    \end{proof}

    \begin{theorem}\label{thm:kerforarb}
        Let $C \in \calT\calE(G)$ be an endotrivial complex with trivial homology. If $p = 2$, then $C \in \ker(\Lambda)$ if and only if $h_C$ is even-valued. If $p$ is odd and $C \in \ker(\Lambda)$, then $h_C$ admits only even values and for all pairs $K,H$ of subgroups of $G$ satisfying $[H:K] = p$ and $K \leq H \leq N_G(K)$, the following congruence holds: \[h_K(C) \equiv h_H(C) \mod 2|O_{p'}(\Aut_{N_G(K)}(H))|.\]
    \end{theorem}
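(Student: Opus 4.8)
The plan is to read both claims off the explicit formula $\Lambda([C]) = (\dim(h_C), \calH(C))$ of Theorem \ref{thm:etrivtodecomp}, which splits membership in $\ker(\Lambda)$ into the two conditions $\dim(h_C)=1$ in $(B(S)^G)^\times$ and $\calH(C)=1$ in the coherent character tuple. The first condition is equivalent to $h_C$ being even-valued, since $\dim(h_C)(P)=(-1)^{h_C(P)}$ and the identity of $(B(S)^G)^\times$ has all marks $+1$; this yields the ``even-valued'' clause in both the $p=2$ and the $p$ odd case with no further work.

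For $p=2$ I would show that the character-tuple condition is automatic. As $C\in\calT\calE(G)$, its unique indecomposable representative descends to $\F_2$, so each homology module $H_{h_C(P)}(C(P))$ is an extension of scalars of a one-dimensional $\F_2[N_G(P)/P]$-module; the corresponding character is valued in $\F_2^\times=\{1\}$ and hence trivial. Thus $\calH(C)=1$ for every such $C$, and $C\in\ker(\Lambda)$ if and only if $h_C$ is even-valued, which gives the ``iff'' for $p=2$.

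The substance is the congruence for $p$ odd, which I would obtain by localizing at $K$ and reducing to the cyclic Sylow case of Theorem \ref{thm:kerforcyclics}. Fix a pair $K\le H\le N_G(K)$ with $[H:K]=p$, put $\bar G:=N_G(K)/K$ and $\bar H:=H/K\cong C_p$, and form the Brauer construction $\bar C:=C(K)$, an endotrivial complex of $k\bar G$-modules. The iterated Brauer identity $(C(K))(Q/K)\cong C(Q)$ gives $h_{\bar C}(Q/K)=h_C(Q)$ for $K\le Q\le N_G(K)$, while $\calH_{\bar C}(\bar Q)$ is the restriction of $\calH_C(Q)$; since $C\in\ker(\Lambda)$ forces every $\calH_C(Q)=1$ and keeps the h-marks even, we get $\bar C\in\calT\calE(\bar G)\cap\ker(\Lambda_{\bar G})$. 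Now I would realize the automizer of $\bar H$ by a cyclic-Sylow subgroup: because $\Aut_{\bar G}(\bar H)\le\Aut(C_p)\cong C_{p-1}$ is a cyclic $p'$-group, choosing a preimage of a generator and passing to its $p'$-part produces an element $\bar g$ normalizing $\bar H$ with $W:=\bar H\rtimes\langle\bar g\rangle$ having Sylow $p$-subgroup exactly $\bar H$ and $\Aut_W(\bar H)=\Aut_{\bar G}(\bar H)$. Restricting $\bar C$ to $W$ preserves membership in $\calT\calE\cap\ker(\Lambda)$, and since $\Res^{\bar G}_W$ commutes with $\Lambda$, applying Theorem \ref{thm:kerforcyclics} to $W$ (whose period is $2|\Aut_W(\bar H)|=2\Phi$) yields
\[ h_C(K)=h_{\bar C}(\bar 1)\equiv h_{\bar C}(\bar H)=h_C(H)\pmod{2\,|\Aut_{N_G(K)/K}(H/K)|}. \]

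The step I expect to be the main obstacle is matching this modulus with the quantity $2\,|O_{p'}(\Aut_{N_G(K)}(H))|$ appearing in the statement. The natural comparison is the restriction homomorphism $\Aut_{N_G(K)}(H)\to\Aut(H/K)\cong C_{p-1}$, whose image is precisely $\Aut_{N_G(K)/K}(H/K)$; as the target is a $p'$-group this governs the $p'$-part, and the congruence is sharp exactly when this map identifies $O_{p'}(\Aut_{N_G(K)}(H))$ with $\Aut_{N_G(K)/K}(H/K)$, i.e.\ when the relevant $p'$-automorphisms act faithfully on $H/K$. All the homotopy-theoretic input is delegated to Theorem \ref{thm:kerforcyclics}, so the real work lies in this automizer bookkeeping; I would therefore treat the precise relationship between $\Aut_{N_G(K)}(H)$ and its action on $H/K$ with particular care, since it is there — rather than in the reduction itself — that the exact constant in the congruence is pinned down.
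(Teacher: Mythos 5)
Your proposal follows the paper's proof essentially step for step: the even-valuedness clause and the $p=2$ equivalence are read off from Theorem~\ref{thm:etrivtodecomp} exactly as in the paper (the paper likewise notes that $\calH_C(P)=1$ for all $P$ when $C\in\calT\calE(G)$ and $p=2$), and for $p$ odd the paper also replaces $C$ by $C(K)$ to reduce to a pair $(1,H/K)$ inside $N_G(K)/K$ and then invokes Theorem~\ref{thm:kerforcyclics}. The only cosmetic difference is that the paper realizes the modulus as the least common multiple of $2\Phi(H')$ over all subgroups $H'$ of $N_G(K)/K$ with Sylow $p$-subgroup $H/K$, whereas you use a single such subgroup $W=\bar H\rtimes\langle\bar g\rangle$; since the automizer of a group of order $p$ is a cyclic $p'$-group, these are the same argument.

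The ``main obstacle'' you flag at the end is, however, a genuine gap --- and one the paper's own proof passes over silently. The reduction (yours and the paper's alike) establishes the congruence modulo $2\,|\Aut_{N_G(K)/K}(H/K)|$, i.e. modulo twice the order of the \emph{image} of $\Aut_{N_G(K)}(H)$ under restriction to $\Aut(H/K)$. This need not equal $2\,|O_{p'}(\Aut_{N_G(K)}(H))|$: the kernel of $\Aut(H)\to\Aut(H/K)$ is not in general a $p$-group, so $p'$-automorphisms of $H$ coming from $N_G(K)$ can act trivially on $H/K$. A concrete instance is $G=S_3\times C_3$ with $p=3$, $K$ the $A_3$-factor and $H$ the Sylow $3$-subgroup: there $\Aut_{N_G(K)/K}(H/K)$ is trivial while $O_{p'}(\Aut_{N_G(K)}(H))\cong C_2$, and one can check (using the coherence conditions and restriction to $S_3\times 1$) that the $G$-stable Borel--Smith function taking value $0$ on $1$ and $K$ and value $2$ on all other $3$-subgroups is the h-mark function of an element of $\calT\calE(G)\cap\ker(\Lambda)$; for that element $h_C(K)-h_C(H)=-2$, so the congruence modulo $4$ demanded by the statement fails. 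So you were right to identify this as the crux rather than mere bookkeeping: the identification cannot be made in general, and the modulus that the argument actually delivers is $2\,|\Aut_{N_G(K)/K}(H/K)|$.
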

    \begin{proof}
        If $p=2$, then if $C \in \calT\calE(G)$, $\calH_C(P) = 1$ for any $p$-subgroup $P$ of $G$. Therefore, it easily follows from Theorem \ref{thm:etrivtodecomp} that $h_C$ is even valued if and only if $C \in \ker(\Lambda)$.

        If $p$ is odd, similarly it is clear that $h_C$ admits only even values. For the second term, it suffices to show it for the pair $1,H$, with $H$ a subgroup of $G$ of order $p$ by replacing $C$ with $C(K)$, as $C(K) \in \ker(\Lambda)$ as well. For every subgroup $H'$ of $G$ with Sylow $p$-subgroup $H$, we have a congruence $h_C(1) \equiv h_C(H) \mod 2\Phi(H')$ by Theorem \ref{thm:kerforcyclics}. Therefore, it remains to determine the least common multiple of all such $2\Phi(H') = 2|\Aut_{H'}(H)|.$ We claim this is the order of the largest $p'$-subgroup of $\Aut_G(H)$. Indeed, every element of $\Aut_{H'}(H)$ is a $p'$-element of $\Aut_G(H)$ and conversely, every $p'$-element $c_g \in \Aut_G(H)$ falls in some $\Aut_{H'}(H)$ by setting $H' := H \rtimes \langle g\rangle$ (if $g$ is not a $p'$-element, since $c_g$ has $p'$-order $g$ may be replaced with a suitable $p'$-element). Thus, we conclude $h_1(C) \equiv h_H(C) \mod 2|O_{p'}(\Aut_G(H))|$.
    \end{proof}

    The question of whether the converse of Theorem \ref{thm:kerforarb} holds for odd primes $p$ remains open. Given any endotrivial complex $C$, if $h_C(P) = 0$ for all cyclic $p$-subgroups $P$ of $G$, then $h_C \equiv 0$. We ask if the same should hold for $\calH_C$: if $\calH_C(P) = 1 \in \Hom(N_G(P)/P, k^\times)$ for all cyclic $p$-subgroups $P$ of $G$, does $\calH_C \equiv 1$? If the answer to this question is affirmative, a complete determination of $\ker(\Lambda)$ follows.

    \bibliography{bib}
    \bibliographystyle{alpha}

\end{document}